\newtheorem{theo}{Theorem}
\newtheorem{proposition}{Proposition}
\newtheorem{lemma}{Lemma}
\newtheorem{rmq}{Remark}
\newtheorem{coro}{Corollary}
\newcommand{\tcr}{\textcolor{black}}
\newcommand{\LW}{\mathcal{L}_{\mathcal{W}}}
\newcommand \cqfd{$\qquad \square$}
\newcommand{\znr}{Z_n^{(r)}}
\newcommand{\znzr}{Z_{n_0}^{(r)}}
\newcommand{\znnr}{Z_{n+1}^{(r)}}
\newcommand{\znrr}{Z_n^{(r+1)}}
\newcommand{\temps}{n}
\numberwithin{ass}{section}
\numberwithin{theo}{section} \numberwithin{proposition}{section}
\numberwithin{coro}{section}
\numberwithin{lemma}{section}
\numberwithin{definition}{section}
 \numberwithin{rmq}{section}
\newcommand{\ER}{\mathbb {R}}\newcommand{\EN}{\mathbb {N}}
\newcommand{\PE}{\mathbb {P}}
\newcommand{\ES}{\mathbb{E}}
\author{S\'ebastien Gadat\footnote{Toulouse School of Economics, UMR 5604, Universit\'e Toulouse I Capitole, France. \ E-mail:\texttt{sebastien.gadat@math.univ-toulouse.fr}.},
Fabien Panloup\footnote{Institut de Math\'ematiques de Toulouse, UMR 5219, 118, route de Narbonne F-31062 Toulouse Cedex 9, France. E-mail: \texttt{fabien.panloup@math.univ-toulouse.fr}},
Sofiane Saadane\footnote{Institut de Math\'ematiques de Toulouse, UMR 5219, 118, route de Narbonne F-31062 Toulouse Cedex 9, France. E-mail: \texttt{sofiane.saadane@math.univ-toulouse.fr} }}
\title{Regret bounds for Narendra-Shapiro bandit algorithms}
\begin{document}
\maketitle







\begin{abstract}
Narendra-Shapiro (NS) algorithms are bandit-type algorithms developed in the 1960s which have been deeply studied in infinite horizon but for which scarce non-asymptotic results exist. In this paper, we focus on a non-asymptotic study of the \textit{regret} and  address the following question: are Narendra-Shapiro bandit algorithms competitive from this point of view? In our main result, we obtain some uniform explicit bounds for the regret of \textit{(over)-penalized}-NS algorithms. \smallskip

\noindent We also extend to the multi-armed case some convergence properties of penalized-NS algorithms towards a stationary Piecewise Deterministic Markov Process (PDMP). Finally, we establish some new sharp mixing bounds for these processes.
\end{abstract}

\bigskip

\noindent \textit{Keywords}:  Regret, Stochastic Bandit Algorithms, Piecewise Deterministic Markov Processes






\section{Introduction}

The so-called Narendra-Shapiro \tcr{bandit} algorithm (referred to as NSa) was introduced in \cite{Norman} and developed in  \cite{Narendra} as a linear learning automata. \tcr{This} algorithm has been primarily considered by the probabilistic community as an interesting benchmark of  stochastic algorithm. More precisely, \tcr{NSa is  an example of  recursive (non-homogeneous) Markovian algorithm, topic} whose almost complete historical overview   may be found in the seminal contributions of \cite{Duflo} and \cite{Kushner}.\smallskip

\noindent  NSa belongs to the large class of  bandit-type policies \tcr{whose principle} may be  sketched as follows:
a $d$-armed bandit algorithm is a procedure designed to determine  which one, among $d$ sources, is the most profitable without spending too much time on the wrong ones. In the simplest case, the sources (or arms) randomly provide some rewards whose values belong to $\{0;1\}$ with Bernoulli laws. The associated probabilities of success $(p_1,...,p_d)$ are unknown to the player and his goal is to determine the most efficient source, \textit{i.e.} the highest probability of success.

\qquad Let us now remind a rigorous definition of admissible sequential policies.
We consider $d$ independent sequences $(A_n^i)_{n \geq 0}$ of \textit{i.i.d.} Bernoulli random variables $\mathcal{B}(p_i)$. Each $A_n^i$ represents the reward associated with the arm $i$ at  time $n$. 
We then consider some sequential predictions where at each stage $n$ a forecaster chooses an arm $I_{\temps}$, receives a reward $A_{\temps}^{I_{\temps}}$ and then uses this information to choose the next arm at step $\temps+1$. 
As introduced in the pioneering work  \cite{Robbins}, the rewards are sampled independently of a fixed product distribution at each step $\temps$. The innovations  here at time $\temps$ are provided by $(I_{\temps},A_{\temps}^{I_{\temps}})$ and we are naturally led to introduce the filtration $(\mathcal{F}_n)_{n \geq 0} := \left(\sigma((I_1,A_{1}^{I_1}), \ldots,(I_{\temps},A_{\temps}^{I_{\temps}}))\right)_{n \geq 0}$. In the following, the sequential admissible policies will be a $(\mathcal{F}_n)_{n \geq 0}$ (inhomogeneous) Markov chain.
We also define another filtration by adding all the events before step $n$ and observe that 
$(\bar{\mathcal{F}}_n)_{n \geq 0} := (\sigma((I_1,(A_1^{j})_{1 \leq j \leq d}) \ldots, (I_n,(A_n^{j})_{1 \leq j \leq d})))_{n \geq 0}.$ To sum-up, 
$\bar{\mathcal{F}}_n$ contains all the results of each arm between time $1$ and $n$ although $\mathcal{F}_n$ only provides partial information about the tested arms.\smallskip


In this paper, we focus on the stochastic NSa whose principle is very simple: it consists in sampling one arm according to a probability distribution on $\{1,\ldots,d\}$, and \tcr{in} modifying this probability distribution  in terms of the reward obtained with the chosen arm. \tcr{From this point of view}, this algorithm bears similarities with the EXP3 algorithm (and many of its variants) introduced in \cite{Auer2}.  Among other close bandit algorithms, one can also  cite the Thompson Sampling strategy where the random selection of the  arm is based on  a Bayesian posterior which is updated after each result. We refer to \cite{Thompson} for a recent theoretical contribution on this algorithm. \smallskip

\noindent Instead of sampling one arm sequentially according to a randomized decision, other algorithms define their policy through a deterministic maximization procedure at each iteration.
 Among them, we can mention the  UCB algorithm \cite{Auer} and its derivatives (including MOSS \cite{AudBub} and KL-UCB \cite{Garivier}), whose dynamics are  dictated by an appropriate empirical upper confidence bound of the estimated best performance.
 \smallskip

\tcr{Let us now present the NSa algorithm. In fact,} we will distinguish two types of NSa: crude-NSa and penalized-NSa. Before going further, let us recall their mechanism in the case of $d=2$ (the general case will be introduced in Section \ref{sec:def}). Designating $X_n$ as the probability of drawing arm 1 at step $n$ and $(\gamma_n)_{n\geq 0}$ as a decreasing sequence of positive numbers that tends to 0 when $n$ goes to infinity, crude-NS is recursively defined by: 
 \begin{equation}\label{eq:bandit}
X_{n+1}=X_n+\begin{cases} \gamma_{n+1} (1-X_n )&\textnormal{if arm $1$ is selected and wins }\\
-\gamma_{n+1} X_n&\textnormal{if another arm is selected and wins }\\
0 &\textnormal{otherwise}\\
\end{cases}
\end{equation}
Note that the construction is certainly symmetric, \textit{i.e.}, $1-X_n$ (which corresponds to the probability of drawing arm 2) has a symmetric dynamics.
The long-time behavior of some NSa was extensively investigated in the last decade. To name a few, in \cite{PLT_trust} and \cite{PL_fast}, some convergence and rate of convergence results are proved. However, these results  strongly depend on both $(\gamma_n)$ and the probabilities of success of the arms. In order to \tcr{get rid of} these constraints, the authors then introduced in 
 \cite{Lamberton_Pages} a  penalized NSa   and proved that this method is an efficient distribution-free procedure, meaning that
 it unconditionally converges to the best arm  on the unknown probabilities $p_1$ and $p_2$. 
 The idea of the penalized-NS algorithm  is to also take  the failures of the player into account  and to reduce  \tcr{the probability of drawing the tested arm} when it loses. Designating  $(\rho_n)_{n\geq 0}$ as a second positive sequence, the dynamics of the penalized NSa  is given by :

\begin{equation}\label{algo:pen_LRI}
X_{n+1}=X_n +\begin{cases} \gamma_{n+1} (1-X_n)&\textnormal{if arm $1$ is selected and wins }\\
-\gamma_{n+1} X_n&\textnormal{if arm $2$ is selected and wins }\\
-\rho_n\gamma_{n+1} X_n&\textnormal{if arm $1$ is selected and loses}\\
\rho_{n+1}\gamma_{n+1} (1-X_n)&\textnormal{if arm $2$ is selected and loses.}
\end{cases}
\end{equation}

\textbf{Performances of  bandit algorithms.} In view of potential applications, it is \tcr{certainly} important to  \tcr{have some informations about} the  performances of the \tcr{used policies}. \tcr{To this end, one first needs to define} what is a ``good" sequencial algorithm.  The primary \tcr{efficiency requirement is the ability of the algorithm} to  \tcr{asymptotically recover} the best arm. In \cite{Lamberton_Pages}, this property is referred to as the \textit{infallibility} of the algorithm. If  
\tcr{without loss of generality, the first arm is assumed to be the best,  ($i.e.$ that $p_1>\max\{p_2,\ldots,p_d\}$) and if $X_n^{(1)}$ denotes the probability of drawing arm $1$, the algorithm is said to be infallible if}
\begin{equation}\label{eq:infall}
\PE(X_n^{(1)}\xrightarrow{n\rightarrow+\infty}1)=1.
\end{equation}

An alternative way for describing the efficiency of a method is to consider the behaviour of the cumulative reward $S_n$ obtained between time $1$ and $n$:
$$
S_n := \sum_{k=1}^{n} A_k^{I_k}.
$$
In particular, in the old paper \cite{Robbins}, Robbins is looking for algorithms such that
$$
p_1 - \frac{\mathbb{E}[S_n]}{n} \xrightarrow{n\rightarrow+\infty}0.
$$
This last property is  weaker than the infallibility of an algorithm since the Lebesgue theorem associated to \eqref{eq:infall} implies the convergence above.\\

 A much stronger requirement involves the \textit{regret} of the algorithm.
The regret measures the gap between the cumulative reward of the best player and the one induced by the policy. The regret $R_n$ is the $\bar{\mathcal{F}}_n$-measurable random variable defined as:
\begin{equation}\label{eq:defregret}
R_n := \max_{1 \leq j \leq d} \sum_{k=1}^n \left[A_{k}^j - A_{k}^{I_{k}}\right].
\end{equation}
A good strategy corresponds to a selection procedure that minimizes the expected regret $ \mathbb{E} R_n$, optimal ones being referred to as \textit{minimax} strategies. 

The former expected regret cannot be easily handled and is generally replaced in statistical analysis by the \textit{pseudo-regret} defined as
\begin{equation}\label{eq:defpseudo}
\bar{R}_n := \max_{1 \leq j \leq d} \mathbb{E} \sum_{k=1}^n  \left[A_{k}^j - A_{k}^{I_{k}}\right].
\end{equation}
Since $p_1 > p_j, \forall j \neq 1$, $\bar{R}_n$ can also be written as
\begin{eqnarray*}
\bar{R}_n &=&  \sum_{{k}=1}^n\mathbb{E}(A^1 _k)-\mathbb{E} \left( \sum_{{k}=1}^n A_{{k}}^{I_{{k}}}\right) = n \left( p_1 - \frac{\mathbb{E}[S_n]}{n}\right).
\end{eqnarray*}

A low \textit{pseudo-regret} property then means that the quantity
$$
n \left( p_1 - \frac{\mathbb{E}[S_n]}{n}\right)
$$
has to be small, in particular sub-linear with $n$. The  quantities $R_n$ and $\bar{R}_n$ are closely related and it is reasonable to study the pseudo-regret instead of the true regret, owing to the next proposition:
\begin{proposition}\label{prop:regret_vs_pseudo}
$(i)$ For any $(\mathcal{F}_n)_{n \geq 0}$-measurable strategy, we obtain after $n$ plays:

$$
0 \leq \mathbb{E} R_n - \bar{R}_n \leq  \sqrt{\frac{n\log d}{2}}.
$$
$(ii)$ Furthermore,  for every integer $n$ and $d$ and for any (admissible) strategy,
$$\sup_{p_1 > p_2 \geq \ldots \geq p_d} \ES[{R}_n]\ge \frac{1}{20}\sqrt{nd}.$$
\end{proposition}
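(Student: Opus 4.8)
\emph{Part $(i)$.} Write $G_j:=\sum_{k=1}^n A_k^j$ for the (unobserved) cumulative reward of arm $j$ and $Y_j:=\sum_{k=1}^n\big(A_k^j-A_k^{I_k}\big)$, so that $R_n=\max_j Y_j$. Since $\bar R_n=np_1-\ES[S_n]=\max_j\ES[G_j]-\ES[S_n]$ while $\ES[R_n]=\ES[\max_j G_j]-\ES[S_n]$, the term $\ES[S_n]$ cancels and $\ES[R_n]-\bar R_n=\ES[\max_j G_j]-\max_j\ES[G_j]$. The left inequality is then just $\ES[\max_j Y_j]\ge\max_j\ES[Y_j]$ (convexity of the maximum, i.e.\ $\max_j Y_j\ge Y_{j_0}$ followed by $\sup_{j_0}\ES$). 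For the right one I would use $\max_j G_j\le\max_j\big(G_j-\ES[G_j]\big)+\max_j\ES[G_j]$, so that $\ES[R_n]-\bar R_n\le\ES\big[\max_j(G_j-\ES[G_j])\big]$; each $G_j-\ES[G_j]$ is a sum of $n$ independent centred increments supported on an interval of length $1$, hence $\tfrac n4$-sub-Gaussian by Hoeffding's lemma, and the classical maximal inequality $\ES[\max_{1\le j\le d}W_j]\le\sqrt{2\sigma^2\log d}$ for $\sigma^2$-sub-Gaussian variables yields exactly $\sqrt{n\log d/2}$.

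\emph{Part $(ii)$.} This is a minimax information-theoretic lower bound. Fix an admissible strategy and a level $\Delta\in(0,1/2)$ to be tuned. For $i\in\{1,\dots,d\}$ let $\nu_i$ be the instance in which arm $i$ is $\mathcal B(\tfrac12+\Delta)$ and every other arm is $\mathcal B(\tfrac12)$, and let $\nu_0$ be the instance with all arms $\mathcal B(\tfrac12)$. Each $\nu_i$ has a unique optimal arm, so (up to a permutation of the arm labels) $\max_{1\le i\le d}\ES_{\nu_i}[R_n]$ is a lower bound for the supremum in the statement; moreover, by the left inequality of part $(i)$, $\ES_{\nu_i}[R_n]\ge\bar R_n(\nu_i)=\Delta\big(n-\ES_{\nu_i}[N_i]\big)$, where $N_i:=\sum_{k=1}^n\mathds{1}_{\{I_k=i\}}$. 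Averaging over $i$, it thus remains to bound $\tfrac1d\sum_i\ES_{\nu_i}[N_i]$ from above.

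The point is to pass to the reference instance $\nu_0$. Pinsker's inequality applied to the law of the observed history $(I_1,A_1^{I_1},\dots,I_n,A_n^{I_n})$ gives $\ES_{\nu_i}[N_i]-\ES_{\nu_0}[N_i]\le n\sqrt{\tfrac12\,\mathrm{KL}\big(\PE_{\nu_0}\,\|\,\PE_{\nu_i}\big)}$, and the bandit divergence-decomposition identity, together with the fact that $\nu_0$ and $\nu_i$ coincide on every arm but arm $i$, yields $\mathrm{KL}\big(\PE_{\nu_0}\|\PE_{\nu_i}\big)=\ES_{\nu_0}[N_i]\cdot\mathrm{kl}\big(\tfrac12,\tfrac12+\Delta\big)\le c\,\Delta^2\,\ES_{\nu_0}[N_i]$. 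Summing over $i$, using $\sum_i\ES_{\nu_0}[N_i]=n$ and Cauchy--Schwarz in the form $\sum_i\sqrt{\ES_{\nu_0}[N_i]}\le\sqrt{d\sum_i\ES_{\nu_0}[N_i]}=\sqrt{nd}$, one gets $\sum_i\ES_{\nu_i}[N_i]\le n+n\Delta\sqrt{c\,nd/2}$, and therefore
$$\sup_{p_1>p_2\ge\ldots\ge p_d}\ES[R_n]\ \ge\ \frac1d\sum_{i=1}^d\ES_{\nu_i}[R_n]\ \ge\ n\Delta\Big(1-\frac1d-\Delta\sqrt{\tfrac{cn}{2d}}\Big).$$
Choosing $\Delta$ of order $\sqrt{d/n}$ (capped strictly below $1/2$) turns the parenthesis into a positive absolute constant and leaves a bound of the form $\mathrm{const}\cdot\sqrt{nd}$.

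\emph{Main obstacle.} Part $(i)$ is routine. In part $(ii)$ the one structural point needing care is the divergence-decomposition identity: using the filtration generated by the observed pairs $(I_k,A_k^{I_k})$ one must show that the relative entropy between the two history laws factorizes as $\sum_j\ES_{\nu_0}[N_j]\,\mathrm{kl}\big(\nu_0^{(j)},\nu_i^{(j)}\big)$. The genuinely delicate part is then purely quantitative --- a sharp bound on $\mathrm{kl}(\tfrac12,\tfrac12+\Delta)$, the exact choice of $\Delta$, and the control of the $(1-\tfrac1d)$ factor --- in order to land precisely on the explicit constant $\tfrac1{20}$; this last optimization is essentially the computation carried out in \cite{Auer2} and its refinements.
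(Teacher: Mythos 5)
The paper itself gives no proof of this proposition: it simply defers part $(i)$ to Proposition 34 of Audibert--Bubeck and part $(ii)$ to Theorem 5.1 of Auer et al. Your sketch is a correct reconstruction of those arguments. For $(i)$, the cancellation of $\ES[S_n]$, the identity $\ES R_n-\bar R_n=\ES[\max_j G_j]-\max_j\ES[G_j]$, and the Hoeffding/sub-Gaussian maximal inequality with $\sigma^2=n/4$ giving $\sqrt{2\cdot(n/4)\log d}=\sqrt{n\log d/2}$ is exactly the Audibert--Bubeck route. For $(ii)$, the change-of-measure to $\nu_0$, the divergence decomposition $\mathrm{KL}(\PE_{\nu_0}\|\PE_{\nu_i})=\ES_{\nu_0}[N_i]\,\mathrm{kl}(\tfrac12,\tfrac12+\Delta)$, Pinsker, and Cauchy--Schwarz to bound $\sum_i\sqrt{\ES_{\nu_0}[N_i]}\le\sqrt{nd}$ is the standard modern phrasing of the Auer et al. lower bound. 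Two small caveats you already flag but are worth making explicit: the factor $(1-\tfrac1d)$ kills the bound at $d=1$ (the stated inequality must be read for $d\ge2$), and landing exactly on the constant $\tfrac1{20}$ rather than an unnamed absolute constant requires a sharp, explicit bound on $\mathrm{kl}(\tfrac12,\tfrac12+\Delta)$ and some case analysis for small $n$, which is precisely the content of the cited Theorem 5.1.
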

We refer to Proposition 34 of \cite{Audibert} for a detailed proof of $(i)$ and to Theorem 5.1 of \cite{Auer2} for $(ii)$. As mentioned in $(ii)$, the bounds are distribution-free (uniform in $p$).\footnote{The rate orders are strongly different if a dependence in $p$ is allowed.} Since the MOSS method of \cite{AudBub} satisfies $\bar{R}_n \leq 25 \sqrt{n d}$, $(i)$ and $(ii)$ show that a non-asymptotic distribution-free minimax  rate is on the order of $\sqrt{n}$.
\smallskip

In particular, a fallible algorithm (meaning that $\PE(X_n\xrightarrow{n\rightarrow+\infty}1)<1$) necessarily generates a linear regret and is not optimal. For example, in the case $d=2$, the dependence of $\bar{R}_n$ in terms of $(X_n)$ is as follows:
\begin{eqnarray}\label{eq:regret_sequentiel}
\bar{R}_n&=& p_1 n-\sum_{k=1}^n (p_1 \ES[X_k]+p_2\ES[1-X_k])=(p_1-p_2)\ES\left[\sum_{k=1}^n (1-X_k)\right] \\
& \gtrsim & (p_1-p_2) \mathbb{P}(X_{\infty}=0) \times n. \nonumber
\end{eqnarray}

\textbf{Objectives.} In this paper, we therefore propose to focus on the regret and to answer to the question ``Are NSa competitive from a regret viewpoint? In the case of positive answer, what are the associated upper-bounds ?"\smallskip

\noindent \tcr{Due to some too restrictive conditions of infallibility, it will be seen that} the crude-NSa cannot be competitive from a regret point of view. As mentioned before, the penalized NSa  is more robust and is \textit{a priori} more appropriate for this problem. More precisely, the penalty induces more balance between exploration and exploitation, $i.e.$ between playing the best arm (the one in terms of the past actions) and exploring new options (playing the suboptimal arms). In this paper, we are going to prove that, up to a slight reinforcement, it is possible to obtain some competitive bounds for the regret of this procedure. The slightly modified penalized algorithm will be referred to as the \textit{over-penalized}-algorithm below.

\textbf{Outline.} The paper is organized as follows : Section \ref{sec:def22} provides some basic information about the crude NSa.   
 Then, in Section \ref{sec:algo}, \tcr{after some background on the penalized Nsa, we introduce a new algorithm called over-penalized NSa}. 

\noindent  Section \ref{sec:mainresults} is devoted to the main results: in Theorem \ref{theo:2bras2}, we establish an  upper-bound of the \textit{pseudo-regret} $\bar{R}_n$ for the over-penalized algorithm in the two-armed case 
and also show a weaker result for the penalized NSa.

\noindent In this section, we also extend to the multi-armed case some existing convergence and rate of convergence results of the two-armed algorithm. In the ``critical'' case (see below for details), the normalized algorithm converges in distribution toward a PDMP (Piecewise Deterministic Markov Process). We develop a careful study of its ergodicity and bounds on the rate of convergence to equilibrium are established. It uses a non-trivial coupling strategy to derive explicit rates of convergence in Wasserstein and total variation distance. The dependence of these rates are made explicit with the several parameters of the initial Bandit problem.

\noindent The rest of the paper is devoted to the proofs of the main results: Section \ref{sec:proof_regret} \tcr{is dedicated to  the regret analysis}, and Section \ref{sec:appendix_multi} establishes the weak limit of the rescaled multi-armed bandit algorithm. Finally, Section \ref{sec:ergodicity} includes all the proofs of the ergodic rates.

\section{Definitions of the NS algorithms}\label{sec:def}

\subsection{Crude NSa and regret}\label{sec:def22}
The crude NSa  \eqref{eq:bandit} is rather simple: it defines a $({\cal F}_n)_{n \geq 0}$ Markov chain $(X_n)_{n \geq 0}$ and  $I_{\temps}$ is a random variable satisfying: 

$$\mathbb{P}(I_{\temps+1} =1\vert \mathcal{F}_n) = X_n \quad \textnormal{and}\quad \mathbb{P}(I_{\temps+1} =2\vert \mathcal{F}_n) = 1-X_n$$ The arm $I_{\temps+1}$ is selected at step $\temps+1$ with the current distribution $(X_{\temps},1-X_{\temps})$ and is evaluated. In the event of success, the weight of the arm $I_{\temps+1}$ is increased  and the weight of the other arm is decreased by the same quantity. The algorithm can be rewritten in a more  \tcr{concise} form as:
\begin{eqnarray}\label{algo:LRI}
X_{n+1} = X_n  + \gamma_{\temps+1}(\mathds{1}_{I_{\temps+1}=1}-X_n )A_{n+1}^{I_{n+1}}.
\end{eqnarray}

The arm $i$ at step $n$ succeeds with the probability $p_i= \mathbb{P}(A^i_n=1)$ and we suppose $w.l.o.g.$ that $p_1 > p_2$ so that the arm 1 is the optimal one.

As pointed \tcr{in} \eqref{eq:regret_sequentiel}, we obtain that
$$
\bar{R}_n=(p_1-p_2)\ES\left[\sum_{k=1}^n (1-X_k)\right].$$

This formula is important regarding the fallibility of an algorithm. In particular, it is shown in \cite{PL_fast} that for any choice $\gamma_n = C(n+1)^{-\alpha}$ with $\alpha \in (0,1)$ and $C>0$ or $\gamma_n = C/(n+1)$ with $C>1$, the NSa  \eqref{algo:LRI} may be fallible: some parameters $(p_1,p_2)$ exist such that $(X_n)_{n \geq 0}$ a.s. converges to a binary random variable $X_{\infty}$ with $\mathbb{P}(X_{\infty}=0)>0$. In this situation, for  large enough $n$, we have:
$$
\bar{R}_n \gtrsim (p_1-p_2) \mathbb{P}(X_{\infty}=0) \times n >> \sqrt{n}
$$

It can easily be concluded that this method cannot induce a competitive policy since some ``bad" values of the probabilities $(p_1,p_2)$ generate a linear regret.

\noindent 
%




\subsection{Penalized and over-penalized two-armed NSa}\label{sec:algo}
\paragraph{Penalized NSa.}
A major difference between the crude NSa and its penalized counterpart introduced in \cite{Lamberton_Pages} relies on the exploitation of the failure of the selected arms. The
crude NSa \eqref{eq:bandit} only uses the sequence of successes to update the probability distribution $(X_{\temps},1-X_\temps)$ since the value of $X_{\temps}$ is modified \textit{iff} $A_{\temps}^{I_{\temps}}=1$. In contrast, the penalized NSa  \eqref{algo:pen_LRI} also uses the information generated by a potential failure of the arm $I_{\temps+1}$.
More precisely, in the event of success of the selected arm $I_{\temps+1}$, this penalized NSa mimics the crude NSa, whereas in the case of failure, the weight of the selected arm is now multiplied (and thus decreased) by a factor $(1-\gamma_{\temps+1} \rho_{\temps+1})$ (whereas the probability of drawing the other arm is increased by the corresponding quantity). 
For the penalized NSa, the update formula of $(X_{\temps})_{\temps \geq 1}$ can be written in the following way:
\begin{eqnarray}
X_{\temps+1}& =& X_{\temps} + \gamma_{\temps+1} \left[ \mathds{1}_{I_{\temps+1}=1} - X_{\temps} \right] A_{\temps+1}^{I_{\temps+1}} \nonumber \\ &&- \gamma_{\temps+1} \rho_{\temps+1} \left[ X_{\temps}  \mathds{1}_{I_{\temps+1}=1}  - (1-X_{\temps}) \mathds{1}_{I_{\temps+1}=2} \right] (1-A_{\temps+1}^{I_{\temps+1}}).\label{eq:pen_LRId2}
\end{eqnarray}
\paragraph{Over-penalized NSa.}
\tcr{In view of the minimization of the regret}, we will show that it may be  useful to reinforce the penalization.
For this purpose, we introduce a slightly ``over-penalized''  NSa where a player is also (slightly) penalized if it wins:
 \begin{itemize}
\item If player 1 wins, then with probability $1-\sigma$ it is penalized by  a factor $\gamma_{n+1}\rho_{n+1}X_n$.
\item If player 2 wins, then with probability $1-\sigma$ arm 1 is increased by a factor of $\gamma_{n+1}\rho_{n+1}(1-X_n)$.
\end{itemize}  
The over-penalized-NSa can be written as follows
\begin{eqnarray}
\lefteqn{X_{\temps+1}^\sigma = 
 X_{\temps}^\sigma + \gamma_{\temps+1} \left[ \mathds{1}_{I_{\temps+1}=1} - X_{\temps}^\sigma \right] A_{\temps+1}^{I_{\temps+1}}}\nonumber\\
  &-&  \gamma_{\temps+1} \rho_{\temps+1} \left[ X_{\temps}^\sigma  \mathds{1}_{I_{\temps+1}=1}  - (1-X_{\temps}^\sigma) \mathds{1}_{I_{\temps+1}=2} \right] \left(
1-A_{\temps+1}^{I_{\temps+1}}B_{\temps+1}^\sigma\right) \label{eq:pen_LRId3}
\end{eqnarray}
where $(B_\temps^\sigma)_{\temps}$ is a sequence of i.i.d. r.v. with a Bernoulli distribution ${\cal B}(\sigma)$, meaning that $\mathbb{P}(B_\temps^\sigma=0)=1-\sigma$. Moreover, these r.v. are  independent of $(A_\temps^j)_{n,j}$ and in such a way that for all $\temps\in\mathbb{N}$, $B_\temps^\sigma$ and $I_\temps$ are also independent. It should be noted that 
$$1-A_{\temps}^{I_{\temps}}B_\temps^\sigma=\left[1-A_\temps^{I_{\temps}}\right]+A_\temps^{I_{\temps}}(1-B_\temps^\sigma).$$
\tcr{In fact, this  slight over-penalization  of the successful arm (with  probability $\sigma$) can be viewed as an additional statistical excitation which helps the stochastic algorithm to escape from local traps}. The case $\sigma=1$ corresponds to the penalized NSa  \eqref{eq:pen_LRId2}, whereas when $\sigma=0$, the arm is always penalized when it plays. \tcr{In particular, this modification implies}  that the increment of $X_\temps^\sigma$ is slightly weaker than in the previous case when the selected arm wins.\smallskip
\paragraph{Asymptotic convergence of the penalized NSa.}
\noindent Before stating the main results, we need to understand which regret $\bar{R}_n$ could be reached by  penalized and over-penalized NSa. We recall (in a slightly less general form) the convergence results of Proposition 3, Theorems 3 and 4 of \cite{Lamberton_Pages}. 
\noindent
\begin{theo}[Lamberton \& Pages, \cite{Lamberton_Pages}]\label{theoLP}
Let  $0 \leq p_2 < p_1 \leq 1$ and $\gamma_{\temps} = \gamma_1 \temps^{- \alpha}$ and $\rho_{\temps} =\rho_1  \temps^{-\beta}$  with $(\alpha,\beta) \in (0,+\infty)$ and $(\gamma_1,\rho_1)\in(0,1)^2$. Let $(X_\temps)_{\temps}$ be the algorithm given by \eqref{eq:pen_LRId2}.
 
\begin{itemize}
\item[$i)$] If $0<\beta \leq \alpha$ and $\alpha+\beta\leq 1$,  the penalized two-armed bandit is {\em infallible}.
\item[$ii)$] Furthermore, if $0<\beta < \alpha$ and $\alpha+\beta < 1$, then
$\displaystyle
\frac{1-X_{\temps}}{\rho_{\temps}}  \longrightarrow \frac{1-p_1}{p_1-p_2} \quad \text{a.s.}
$
\item[$iii)$] If $\alpha = \beta \leq 1/2$ and $g=\gamma_1/\rho_1$:
$\displaystyle
\frac{1-X_{\temps}}{\rho_{\temps}} \overset{w^*}\longrightarrow \mu,
$
where $\overset{w^*}\longrightarrow$ stands for the convergence in distribution and $\mu$ is the stationary distribution  of the PDMP whose generator $\mathcal{L}$ acts on $\mathcal{C}_c^1(\mathbb{R}_+)$  as
$$
\forall f \in \mathcal{C}_c^1(\mathbb{R}_+) \qquad 
\mathcal{L}f(y) = p_2 y \frac{f(y+g)-f(y)}{g} + (1-p_1-p_1 y)f'(y).
$$
\end{itemize}
\end{theo}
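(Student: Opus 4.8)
\bigskip

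\noindent\textbf{Proof proposal.}
I will work throughout with the weight of the suboptimal arm $Y_\temps:=1-X_\temps\in[0,1]$, since each item concerns $(Y_\temps)$ or $(Y_\temps/\rho_\temps)$. Conditionally on $\mathcal F_\temps$, \eqref{eq:pen_LRId2} has four branches: arm~$1$ wins (probability $(1-Y_\temps)p_1$, then $Y_{\temps+1}=(1-\gamma_{\temps+1})Y_\temps$); arm~$2$ wins (probability $Y_\temps p_2$, then $Y_{\temps+1}=Y_\temps+\gamma_{\temps+1}(1-Y_\temps)$); arm~$1$ loses (probability $(1-Y_\temps)(1-p_1)$, then $Y_{\temps+1}=Y_\temps+\gamma_{\temps+1}\rho_\temps(1-Y_\temps)$); arm~$2$ loses (probability $Y_\temps(1-p_2)$, then $Y_{\temps+1}=(1-\gamma_{\temps+1}\rho_{\temps+1})Y_\temps$). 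Collecting these one gets the Doob decomposition
$$
Y_{\temps+1}=Y_\temps+\gamma_{\temps+1}h_\temps(Y_\temps)+\gamma_{\temps+1}\Delta M_{\temps+1},\qquad
h_\temps(y)=-(p_1-p_2)(1-y)y+\rho_\temps(1-p_1)(1-y)^2-\rho_{\temps+1}(1-p_2)y^2,
$$
with $\ES[\Delta M_{\temps+1}\mid\mathcal F_\temps]=0$ and $\ES[\Delta M_{\temps+1}^2\mid\mathcal F_\temps]$ bounded. Writing $\ell:=\frac{1-p_1}{p_1-p_2}$, the governing heuristic is: $h_\temps(y)\le-c<0$ for large $\temps$ and $y$ away from $\{0,1\}$; $h_\temps(y)\approx-(p_1-p_2)(y-\rho_\temps\ell)$ for $y$ small, so $Y_\temps$ is attracted to the moving target $\rho_\temps\ell$; and $h_\temps(y)\approx-\rho_{\temps+1}(1-p_2)<0$ for $y\approx1$.

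\emph{Step 1 (infallibility, item $i$).} I would establish $Y_\temps\to0$ a.s.\ by an excursion argument built on the supermartingale-type bound $\ES[Y_{\temps+1}\mid\mathcal F_\temps]\le Y_\temps-\gamma_{\temps+1}(p_1-p_2)(1-Y_\temps)Y_\temps+\gamma_{\temps+1}\rho_\temps(1-p_1)$. Controlling $\sum\gamma_k\Delta M_k$ over index windows on which $\sum_k\gamma_k$ stays bounded, one shows that an excursion of $Y_\temps$ from below $\ep$ to above $2\ep$ is driven back by the (order-$\gamma_\temps$) drift and can happen only finitely often because $\sum_\temps\gamma_\temps=+\infty$; near $y\approx1$ one uses instead the term $-\rho_{\temps+1}(1-p_2)y^2$, whose cumulated effect is decisive precisely because $\sum_\temps\gamma_\temps\rho_\temps=+\infty$, i.e.\ because $\alpha+\beta\le1$, while $\beta\le\alpha$ keeps the upward moves caused by arm~$2$ wins (of size $\sim\gamma_\temps/\rho_\temps$) dominated. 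I expect this step to be the main obstacle: the martingale part need not vanish pathwise (one may have $\sum_\temps\gamma_\temps^2\ES[\Delta M_\temps^2\mid\mathcal F_{\temps-1}]=+\infty$), so a bare Robbins--Siegmund argument is unavailable and the excursions must be localized carefully.

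\emph{Step 2 (rate, item $ii$).} Now $\beta<\alpha$ and $\alpha+\beta<1$, so $Y_\temps\to0$ by Step~1; put $Z_\temps:=Y_\temps/\rho_\temps$. A Lyapunov estimate on $(Y_\temps-\rho_\temps\ell)^2$, using the drift and $\ES[(Y_{\temps+1}-Y_\temps)^2\mid\mathcal F_\temps]=O(\gamma_{\temps+1}^2\rho_\temps)$, shows $(Z_\temps)$ is bounded in $L^2$. The four branches give
$$
\ES[Z_{\temps+1}-Z_\temps\mid\mathcal F_\temps]=\gamma_{\temps+1}\bigl[(1-p_1)-(p_1-p_2)Z_\temps\bigr]+o(\gamma_{\temps+1}),
$$
so $(Z_\temps)$ is a stochastic-approximation scheme whose mean-field ODE $\dot z=(1-p_1)-(p_1-p_2)z$ has $\ell$ as a globally exponentially stable equilibrium. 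Since $\sum_\temps\gamma_\temps=+\infty$ and, over windows of bounded ODE-time $\sum_k\gamma_k$, the fluctuations of $(Z_\temps)$ are negligible — which is where the precise range $\beta<\alpha$, $\alpha+\beta<1$ is consumed, the rescaled noise being controllable only on such windows — the ODE method (with a Kushner--Clark type control adapted to the unbounded rescaled noise) yields $Z_\temps\to\ell$ a.s.

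\emph{Step 3 (critical case, item $iii$).} When $\alpha=\beta$, the jump of $Z_\temps$ caused by arm~$2$ winning has the \emph{constant} size $\gamma_{\temps+1}/\rho_{\temps+1}=g$, so $(Z_\temps)$ cannot converge; note that $\alpha=\beta\le\tfrac12$ is exactly criticality intersected with the infallibility condition $\alpha+\beta\le1$, so $Y_\temps\to0$ still holds. For $f\in\mathcal C_c^1(\R_+)$, expanding the two ``continuous'' branches (arm~$1$ wins/loses, effect on $Z$ of order $\gamma_{\temps+1}$) and keeping the arm~$2$-wins branch (which shifts $Z$ by $g+o(1)$ with probability $p_2Y_\temps=\gamma_{\temps+1}p_2Z_\temps/g+o(\gamma_{\temps+1})$) as an exact finite difference of step $g$, one obtains
$$
\ES[f(Z_{\temps+1})-f(Z_\temps)\mid\mathcal F_\temps]=\gamma_{\temps+1}\mathcal Lf(Z_\temps)+o(\gamma_{\temps+1}),
$$
the remainder collecting the $o(\gamma_{\temps+1})$ Taylor term on the continuous branches, an $O(\gamma_{\temps+1}/\temps)$ term from $\rho_\temps/\rho_{\temps+1}\to1$, and $O(\gamma_{\temps+1}\rho_\temps)$ terms from replacing $1-Y_\temps$ by $1$. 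Summing, dividing by $t_\temps:=\sum_{k\le\temps}\gamma_k\uparrow+\infty$, and using that $f$ is bounded gives $t_\temps^{-1}\sum_{k\le\temps}\gamma_{k+1}\ES[\mathcal Lf(Z_k)]\to0$. A Foster--Lyapunov bound ($V(y)=y$ gives $\mathcal LV(y)=(1-p_1)-(p_1-p_2)y\le-1$ off a compact) makes the $\gamma$-weighted averaged laws $\bar\nu_\temps:=t_\temps^{-1}\sum_{k\le\temps}\gamma_{k+1}\,\mathrm{Law}(Z_k)$ tight, so every weak limit point $\nu$ satisfies $\int\mathcal Lf\,d\nu=0$ for all $f\in\mathcal C_c^1(\R_+)$ and is thus invariant for the PDMP; since that one-dimensional process — downward drift $1-p_1-p_1y$ for $y$ large, upward jumps of fixed size $g$ at rate $p_2y/g$ — is ergodic with a unique invariant law $\mu$ (its mixing, with explicit rates, being precisely what Section~\ref{sec:ergodicity} establishes), $\nu=\mu$. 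A standard strengthening — interpolating $(Z_\temps)$ on growing windows of ODE-time and using the PDMP's mixing to flush out the initial law — then upgrades this to $Z_\temps\overset{w^*}{\longrightarrow}\mu$, which is item~$iii$.
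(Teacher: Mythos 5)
The paper does not prove this theorem: it is stated explicitly as a recall of Proposition~3 and Theorems~3--4 of Lamberton and Pag\`es \cite{Lamberton_Pages}, with only the citation serving as justification, so there is no internal proof to compare yours against. Your outline is nevertheless aligned with the approach of the cited reference and with the paper's own extension to the multi-armed case (Propositions~\ref{prop:multi1} and~\ref{prop:multiweak}): the branch decomposition of $Y_\temps = 1-X_\temps$ is correct (modulo the immaterial $\rho_\temps$ vs.\ $\rho_{\temps+1}$ slip in the arm-1-loses branch), the rescaled mean-field ODE $\dot z = (1-p_1)-(p_1-p_2)z$ gives the right target $\ell = (1-p_1)/(p_1-p_2)$, and the critical-case identification via the constant jump $\gamma_{\temps+1}/\rho_{\temps+1}=g$ and the martingale-problem characterization of $\mathcal L$ mirrors the argument the paper itself gives in Section~\ref{sec:appendix_multi} for the multi-armed variant. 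You also correctly flag the real obstruction in Step~1: with $\alpha\le 1/2$ one can have $\sum_\temps\gamma_\temps^2=+\infty$, so the martingale part does not converge and a na\"ive Robbins--Siegmund or $L^2$ argument fails; the actual proof (and the paper's analogue at the end of the proof of Proposition~\ref{prop:multi1}) handles this by normalizing by $\sum_j\gamma_j\rho_j$ and comparing the martingale's growth to the cumulated drift rather than by controlling it pathwise. In short, this is a reasonable blueprint consistent with the references, but it is not a completed proof, and the paper deliberately leaves these details to \cite{Lamberton_Pages}.
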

In view of Theorem \ref{theoLP}, we can use formula \eqref{eq:regret_sequentiel} to obtain

\begin{eqnarray}\label{eq:regret_2bras}
\bar{R}_n& =& (p_1-p_2)    \sum_{{k}=1}^n \rho_{{k}} \mathbb{E}  \left(\frac{1-X_{{k}}}{\rho_{{k}}}\right).
\end{eqnarray}
We then obtain the key observation
\begin{equation}\label{supnnnn}
\sup_{n\in\mathbb{N}}\ES\left[\frac{1-X_n}{\rho_n}\right]\leq C <+\infty\;\Longrightarrow\; \bar{R}_n \le C(p_1-p_2)\sum_{k=1}^n\rho_k, 
\end{equation}
where $C$ is a constant that may depend on $p_1$ and $p_2$.  According to \tcr{Theorem} \ref{theoLP}, it seems that the potential optimal choice corresponds to the one of $(iii)$. Indeed, the infallibility occurs only when $\alpha \geq \beta$ and $\alpha+\beta \leq 1$ and Equation \eqref{eq:regret_2bras} suggests that $\beta$ should be chosen as large as possible to minimize the r.h.s. of \eqref{supnnnn}, leading to $\alpha=\beta=1/2$.  This  is why in the following, we will focus on the case:
\begin{equation}\label{eq:defpaspoids}
\gamma_n=\frac{\gamma_1}{\sqrt{n}}\quad\textnormal{and}\quad \rho_n=\frac{\rho_1}{\sqrt{n}}.
\end{equation}

\subsection{Over-penalized multi-armed NSa}
We generalize the definition of the penalized and over-penalized NSa to the $d$-armed case, with $d \geq 2$.
\tcr{Let} $p=(p_1,\ldots,p_d) \in (0,1)^d$ and \tcr{assume that $A_n^j \sim \mathcal{B}(p_j)$ ($p_i$ the probability of success of  arm $i$)}. The over-penalized NSa recursively {defines}  a sequence of probability measures \tcr{on $\{1,\ldots,d\}$ denoted by $(\Pi_n)_{n \geq 1}$} where $\Pi_n = (X_n ^1,...,X_n ^d)$. At step $n$, the arm $I_{n+1}$ is sampled according to the discrete distribution $X_n$ and\ tcr{then} tested through the computation of $A_{n+1}^{I_{n+1}}$.
 Setting $j\in \lbrace 1,...,d\rbrace$, the multi-armed NSa is defined by: 

\begin{eqnarray}
\lefteqn{X_{\temps+1}^j= 
  X_{\temps}^j + \gamma_{\temps+1} \left[ \mathds{1}_{I_{\temps+1}=j} - X_{\temps}^j \right]  A_{\temps+1}^{I_{\temps+1}}}\nonumber\\
  &-&   \gamma_{\temps+1} \rho_{\temps+1} X_{\temps}^{I_{\temps+1}}  (1-A_{\temps+1}^{I_{\temps+1}}B_{\temps+1}^\sigma) \left[\mathds{1}_{I_{\temps+1}=j} - \frac{1-\mathds{1}_{I_{\temps+1}=j}}{d-1} \right].\label{eq:pen_bandit}
\end{eqnarray}



 In contrast with the two-armed case, we have  to choose how to distribute the penalty to the other arms when $d>2$. The (natural) choice in \eqref{eq:pen_bandit} is to divide it fairly, $i.e.$, to spread it uniformly over the other arms. Note that alternative algorithms (not studied here) could be considered. 
\section{Main Results}\label{sec:mainresults}

\subsection{Regret of the over-penalized two-armed bandit}
First, we provide some uniform upper-bounds for the two-armed $\sigma$-over-penalized NSa . Our main result is Theorem \ref{theo:2bras2}.
Before stating it, we choose to state a new result when $\sigma=1$, $i.e.$ for the ``original'' penalized NSa  introduced in \cite{Lamberton_Pages}.
\begin{theo}\label{theo:2bras1}
Let $(X_n)_{n\ge0}$ be the two-armed penalized NSa defined by \eqref{eq:pen_LRId2} with $(\gamma_n,\rho_n)_{n\ge1}$ defined by \eqref{eq:defpaspoids} with $(\gamma_1,\rho_1)\in(0,1)^2$. Then, for every $\delta\in(0,1)$, a positive $C_\delta$ exists such that:
$$\forall n\in\EN,\quad \sup_{{(p_1,p_2)\in[0,1]},p_2\le p_1\wedge (1-\delta)}\bar{R}_n\le C_\delta\sqrt{n}.$$
\end{theo}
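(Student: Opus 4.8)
The plan is to exploit the key observation \eqref{supnnnn}: since $\gamma_n = \gamma_1/\sqrt{n}$ and $\rho_n = \rho_1/\sqrt{n}$, one has $\sum_{k=1}^n \rho_k \sim 2\rho_1\sqrt{n}$, so it suffices to prove the uniform bound
\begin{equation*}
\sup_{n\in\EN}\ \sup_{p_2\le p_1\wedge(1-\delta)}\ \ES\left[\frac{1-X_n}{\rho_n}\right]\le C_\delta<+\infty.
\end{equation*}
Write $Y_n := (1-X_n)/\rho_n$. First I would derive the recursion satisfied by $Y_n$ from \eqref{eq:pen_LRId2} by writing $1-X_{n+1}$ in terms of $1-X_n$ and dividing by $\rho_{n+1}$; since $\rho_n/\rho_{n+1} = \sqrt{(n+1)/n} = 1 + O(1/n)$, the rescaling introduces only a lower-order drift term of size $O(Y_n/n)$. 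Then I would take the conditional expectation $\ES[\cdot\mid\mathcal F_n]$, using $\PE(I_{n+1}=1\mid\mathcal F_n) = X_n$ and $\ES[A_{n+1}^j]=p_j$, to obtain a scalar recursion of the form $\ES[Y_{n+1}\mid\mathcal F_n] = Y_n(1 - c_n) + b_n + (\text{remainder})$ where $c_n \asymp 1/\sqrt n$ comes from the negative drift $-(1-p_1-p_1 Y_n)$-type term (compare the PDMP generator $\mathcal L$ in Theorem~\ref{theoLP}(iii), whose drift $(1-p_1-p_1 y)$ points back toward a bounded region), and $b_n \asymp 1/\sqrt n$ is a nonnegative forcing term. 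The point is that the confining coefficient in front of $Y_n$ is of the same order $\gamma_1/\sqrt n$ as the step, so taking expectations gives $\ES[Y_{n+1}] \le \ES[Y_n](1 - \frac{a}{\sqrt n}) + \frac{b}{\sqrt n}$ for constants $a,b>0$ depending only on $p_1$ (hence uniform once $p_2\le p_1$ is used merely to keep the penalty nonnegative and $p_1$ can range over all of $[0,1]$ — here is where $\delta$ enters to control the case $p_1$ close to $1$, making the forcing term $\propto(1-p_1)$ small, or more precisely to keep $a/b$ bounded below).

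Once this one-dimensional affine recursion is in hand, a standard comparison lemma (e.g. if $u_{n+1}\le u_n(1-a_n)+b_n$ with $\sum a_n=\infty$ and $b_n/a_n\to\ell$, then $\limsup u_n\le\ell$; combined with a crude a priori bound for small $n$) yields $\sup_n \ES[Y_n]\le C_\delta$. To make the bound uniform in $n$ from the start rather than only asymptotically, I would instead prove by induction that $\ES[Y_n]\le M_\delta$ for a well-chosen $M_\delta$: the induction step works as soon as $M_\delta(1-a/\sqrt n)+b/\sqrt n\le M_\delta$, i.e. $M_\delta\ge b/a$, using $Y_0 = (1-X_0)/\rho_1$ bounded. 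Plugging back into \eqref{eq:regret_2bras} gives $\bar R_n = (p_1-p_2)\sum_{k=1}^n \rho_k\,\ES[Y_k]\le (p_1-p_2)\,M_\delta\cdot 2\rho_1\sqrt n\le C_\delta\sqrt n$, uniformly over $p_2\le p_1\wedge(1-\delta)$.

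The main obstacle is the control of the remainder terms in the recursion for $\ES[Y_{n+1}\mid\mathcal F_n]$. Two issues require care: (a) the rescaling term $(\rho_n/\rho_{n+1}-1)Y_n$ is of order $Y_n/n$, which is $o(1/\sqrt n)$ relative to the confining term, so it can be absorbed, but one must check it does not destroy the sign of the effective drift for small $n$ — this may force the induction to start at some $n_0(\delta)$ with a separate crude bound for $n\le n_0$; (b) one must verify that the coefficient $a$ multiplying $Y_n$ is genuinely bounded below by a positive constant depending only on $\delta$ and not on $p_2$, which is why the hypothesis is $p_2\le p_1\wedge(1-\delta)$ rather than just $p_2<p_1$: the confinement strength is essentially $\gamma_1 p_1$-ish but the relevant ratio $b/a$ blows up precisely when $1-p_1\to 0$ unless we also bound $p_1$ away from... — more plausibly the role of $\delta$ is to ensure $p_2\le 1-\delta$ so the loss-penalty events have probability $\ge\delta>0$, guaranteeing that the penalization mechanism actually produces a drift of order $\gamma_n\rho_n$ and not something degenerate. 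Disentangling exactly which of $p_1$ or $p_2$ needs the $\delta$ gap, and tracking the constants through the non-homogeneous recursion, is the delicate bookkeeping part; everything else is routine.
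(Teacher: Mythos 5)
Your plan is to establish a one-dimensional affine recursion $\ES[Y_{n+1}]\le \ES[Y_n](1-a/\sqrt n)+b/\sqrt n$ for $Y_n=(1-X_n)/\rho_n$ (or, equivalently up to the constant $\tilde\rho_1$, for $(1-X_n)/\gamma_n$ as the paper does), where the contraction constant $a$ is uniformly positive. This is exactly the step that fails for the penalized case, and it is the reason the paper does not argue this way.

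The recursion for $Y_n$ reads $\ES[\Delta Y_{n+1}\mid\mathcal F_n]=\gamma_n\varphi_n(Y_n)$ with $\varphi_n(y)=y[\epsilon_n+\pi(\gamma_n y-1)]-\tilde\rho_1\kappa_\sigma(1-\gamma_n y)$ and $\tilde\rho_1=\rho_1/\gamma_1$. The ``confining'' coefficient multiplying $Y_n$, namely $\pi X_n=\pi(1-\gamma_n Y_n)$, degenerates precisely where $Y_n$ is largest, i.e.\ when $X_n$ is close to $0$ and $Y_n$ is close to its maximal value $\gamma_n^{-1}$. In that region the drift is governed by the balance between the escape term $\epsilon_n Y_n\approx 1/(2\gamma_1^2)$ and the penalty $\tilde\rho_1\kappa_\sigma(X_n)\approx\tilde\rho_1(1-\sigma p_2)$. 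As the paper notes around \eqref{eq:constraintp2}, $\lim_n\varphi_n(\gamma_n^{-1})<0$ requires $1-\sigma p_2>\rho_1/(2\gamma_1^3)$, which for $\sigma=1$ is \emph{not} satisfiable for most $(\gamma_1,\rho_1)\in(0,1)^2$ even if $p_2\le 1-\delta$ (take $\gamma_1$ small and the right-hand side exceeds $1$). So the drift on $Y_n$ is actually \emph{repulsive} near $y=\gamma_n^{-1}$, and the inequality $\ES[Y_{n+1}]\le\ES[Y_n](1-a/\sqrt n)+b/\sqrt n$ with $a>0$ fixed is simply false there; the affine comparison lemma has nothing to work with. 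Your caveat (a) about ``checking the sign of the effective drift'' touches the right worry but misplaces it: the problem is not the $O(Y_n/n)$ rescaling correction at small $n$, it is the loss of contraction at large $y$ for all $n$.

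This is precisely why the paper's argument is organized around the \emph{increase of exponent}: one studies $Z_n^{(r)}=(1-X_n)^r/\gamma_n$, whose recursion (see \eqref{eq:recZ} and \eqref{eq:betanr}) gains a factor $r$ in front of \emph{both} the contraction term $\gamma_n[\epsilon_n-r\pi]$ and the penalty term $r\rho_{n+1}(1-X_n)^{r-1}\kappa_\sigma(X_n)$, while the escape term $\epsilon_n$ stays the same. For $r$ large enough the critical-point condition becomes $\gamma_1\rho_1>\bigl(r\sqrt2(1-\sigma p_2)\bigr)^{-1}$, which for $\sigma=1$ and $p_2\le 1-\delta$ holds once $r>\bigl(\sqrt2\,\gamma_1\rho_1\,\delta\bigr)^{-1}$. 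That is exactly where $\delta$ enters and why the hypothesis is $p_2\le 1-\delta$ rather than $p_2<p_1$; it is not about keeping $1-p_1$ bounded away from zero (your first guess), and it is more than a heuristic about ``loss events having probability $\ge\delta$'' (your second guess) — it is the precise quantity that makes the leading coefficient of the polynomial $P_n^{(r)}$ negative after choosing $r$ depending on $\delta$. Proposition~\ref{prop:increment} then cascades the bound on $\ES[Z_n^{(r)}]$ back down to $\ES[Y_n]=\ES[Z_n^{(1)}]$. Without this exponent trick (or some substitute handling the non-contractive region near $X_n=0$, e.g.\ a tail bound on $\PE(X_n\approx 0)$), the one-step affine recursion you propose does not close.
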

\begin{rmq} The upper bound of the original penalized-NS algorithm is not completely uniform. From a theoretical point of view, there is not enough  penalty when $p_2$ is too large, which in turn generates a deficiency of the mean-reverting effect for the sequence $((1-X_n)/\rho_n)_{n\ge1}$ when $X_n$ is close to $0$. 
In other words, the trap of the stochastic algorithm near $0$ is not enough repulsive and Figure \ref{fig1:regret} below shows that this
problem also appears numerically and suggests a logarithmic explosion of $\displaystyle{\sup_{p_1<p_2} {\bar{R}_n}/{\sqrt{n}}}$.  \end{rmq}

This explains the interest of the over-penalization, illustrated by the next result, \tcr{which is the main theorem of the paper}.
\begin{theo}\label{theo:2bras2}
Let $(X_n)_{n\ge0}$ be the two-armed $\sigma$-over-penalized NSa defined by \eqref{eq:pen_LRId3} with $\sigma\in[0,1)$ and  \tcr{$(\gamma_n,\rho_n)_{n\ge1}$ defined by \eqref{eq:defpaspoids}} with $(\gamma_1,\rho_1)\in(0,1)^2$. Then,\\
 
\noindent (a) A $C_\sigma(\gamma_1,\rho_1)$ exists such that:
$$\forall n\in\EN,\quad \sup_{{(p_1,p_2)\in[0,1]},p_2 < p_1}\bar{R}_n\le C_\sigma(\gamma_1,\rho_1) \sqrt{n}.$$
(b) Furthermore, the choice $\sigma=0$, $\gamma_n  = 2.63 \rho_n = 0.89/ \sqrt{n}$ yields
 \begin{equation}\label{eq:35bound}
 \forall n\in\EN,\quad \sup_{{(p_1,p_2)\in[0,1]},p_2 < p_1}\bar{R}_n\le 31.1 \sqrt{2n}.
 \end{equation}
\end{theo}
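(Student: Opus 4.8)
\textbf{Proof strategy for Theorem \ref{theo:2bras2}.}

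The plan is to exploit the reduction \eqref{supnnnn}: since $\rho_k = \rho_1/\sqrt{k}$, one has $\sum_{k=1}^n \rho_k \le 2\rho_1\sqrt{n}$, so it suffices to establish a \emph{uniform} (in $(p_1,p_2)$ with $p_2<p_1$) bound
$$
\sup_{n\in\EN}\,\ES\!\left[\frac{1-X_n}{\rho_n}\right]\le C_\sigma(\gamma_1,\rho_1)<+\infty.
$$
Writing $Y_n := (1-X_n)/\rho_n$, I would first derive from \eqref{eq:pen_LRId3} a one-step recursion for $Y_n$, taking the conditional expectation with respect to $\mathcal{F}_n$. The key point of the over-penalization is that, whatever the value of $p_2$, the selected arm is penalized with a \emph{positive} probability $(1-p_2)$ coming purely from the $(1-B^\sigma)$ term (when $\sigma<1$) when arm~$2$ is played and wins, plus the genuine-failure contributions; this should produce a drift of the form
$$
\ES[Y_{n+1}\mid\mathcal{F}_n]\le \bigl(1 - c\,\gamma_{n+1}\bigr)Y_n + C'\gamma_{n+1} + (\text{negligible remainder}),
$$
with $c>0$ bounded below uniformly in $(p_1,p_2)$ — this uniform mean-reversion is exactly what the original penalized NSa lacks (cf. the Remark) and what $\sigma<1$ restores. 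One then concludes by a standard Robbins–Siegmund / discrete Gronwall argument on $u_n := \ES[Y_n]$: $u_{n+1}\le (1-c\gamma_{n+1})u_n + C'\gamma_{n+1}$ together with $\gamma_n=\gamma_1/\sqrt n$ forces $\limsup u_n\le C'/c$ and, more importantly for a fully explicit bound, $\sup_n u_n\le \max(u_1,\,C'/c)$ once $\gamma_1 c\le 1$ (or after a careful handling of the first finitely many steps). Because $Y_n$ can in principle be large, some care is needed to control the second-order term $\gamma_{n+1}^2 \rho_{n+1}^{-1}(\cdots)$; here one uses $\rho_{n+1}^{-1}\gamma_{n+1}^2 = (\gamma_1^2/\rho_1)/\sqrt{n+1} \lesssim \gamma_{n+1}$, so it is absorbed into the same regime.

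For part (b), the strategy is to redo the computation of part (a) keeping \emph{every constant explicit} in the special case $\sigma=0$. With $\sigma=0$ the increment simplifies: the arm that plays is penalized by $\gamma_{n+1}\rho_{n+1}X_n^{I_{n+1}}$ whenever it either loses or (now always) when it is drawn, so the $(1-A^{I}B^\sigma)$ factor becomes just the complementary indicator structure. Setting $g = \gamma_1/\rho_1$ (here $g\approx 2.63$), I would compute $\ES[Y_{n+1}\mid\mathcal{F}_n]$ exactly, optimize the free ratio $g$ and the scale $\gamma_1$ to make the resulting bound $C_0(\gamma_1,\rho_1)$ on $\sup_n\ES[Y_n]$ as small as possible subject to $\gamma_1,\rho_1\in(0,1)$, and then plug into $\bar R_n \le C_0(p_1-p_2)\sum_{k\le n}\rho_k \le C_0 \cdot 1 \cdot 2\rho_1\sqrt n$; the numbers $\gamma_1 = 0.89/\sqrt n\cdot\sqrt n$, $\rho_1 = 0.89/2.63$, and the constant $31.1$ come out of this optimization, with the extra $\sqrt 2$ absorbed in the crude bound $\sum_{k\le n}\rho_k\le 2\rho_1\sqrt n$ versus a sharper $\rho_1\sqrt{2n}$-type estimate. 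One should double-check the edge regimes $p_1\to p_2$ and $p_2\to 1$: in the former $(p_1-p_2)$ is small which only helps, in the latter the $\sigma=0$ penalty still fires at rate $\ge \rho_{n+1}\gamma_{n+1}X_n$ regardless of $p_2$, so the bound stays uniform.

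The main obstacle I anticipate is establishing the drift inequality with a mean-reversion coefficient $c$ that is \emph{bounded below uniformly over all} $(p_1,p_2)$ with $p_2<p_1$, including the degenerate corners, while simultaneously controlling the remainder terms well enough (in particular the terms involving $Y_n^2$ or products $Y_n\cdot(\text{indicator})$ that arise because $1-X_n$ appears both linearly and through the selection probability $X_n$) so that the recursion is genuinely of contractive affine type rather than merely being affine with a coefficient that could exceed $1$ when $Y_n$ is moderately large. A secondary, more bookkeeping-heavy obstacle is propagating all constants through part (b) tightly enough to land on the stated numerical value; this will require keeping the $\gamma_{n+1}^2$ and $\gamma_{n+1}\rho_{n+1}$ terms and choosing $(\gamma_1,\rho_1)$ at the optimum of the explicit bound rather than a convenient sub-optimal point.
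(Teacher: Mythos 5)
Your reduction to bounding $\sup_n\ES[(1-X_n)/\rho_n]$ uniformly in $(p_1,p_2)$ is the right starting point and matches \eqref{supnnnn}, but the central step you propose — an affine Gronwall inequality $\ES[Y_{n+1}\mid\mathcal F_n]\le(1-c\gamma_{n+1})Y_n+C'\gamma_{n+1}$ with $c>0$ and $C'$ \emph{uniform in $n$} — does not hold, and it is precisely to circumvent this failure that the paper introduces the ``increase of exponent'' technique. To see the obstruction, evaluate the drift at the upper end of the state space, $Y_n=\gamma_n^{-1}$ (i.e.\ $X_n=0$). There $\gamma_n\varphi_n(\gamma_n^{-1})=O(\gamma_n)$ (in fact $\varphi_n(\gamma_n^{-1})\to\frac{1}{2\gamma_1^2}-\frac{\gamma_1}{\rho_1}(1-\sigma p_2)$, a bounded constant, possibly negative), while the inequality you want would force
\begin{equation*}
\gamma_n^{-1}+\gamma_n\varphi_n(\gamma_n^{-1})\le (1-c\gamma_{n+1})\gamma_n^{-1}+C'\gamma_{n+1}\approx \gamma_n^{-1}-c+O(\gamma_n),
\end{equation*}
i.e.\ $O(\gamma_n)\le -c$ for large $n$, which is false for any fixed $c>0$. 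Equivalently, the actual bound on the drift of $Y_n$ is roughly $\varphi_n(y)\lesssim -\pi y\,(1-\gamma_n y)+K$: the contraction rate is $\pi X_n$, which vanishes exactly where $Y_n$ is largest. That is not an affine contraction, and a plain Robbins--Siegmund argument on $\ES[Y_n]$ either yields nothing there or, at best, gives contributions of order $\gamma_n$ near $X_n\approx 0$ that sum to $O(\sqrt n)$, not $O(1)$. A secondary issue is that even the weaker requirement that the drift be nonpositive at $y=\gamma_n^{-1}$ already imposes constraint \eqref{eq:constraintp2} on $(\gamma_1,\rho_1)$, whereas part (a) of the theorem is claimed for \emph{all} $(\gamma_1,\rho_1)\in(0,1)^2$; the paper flags this explicitly in the paragraph preceding Section~4.3.

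The paper’s route is different in kind, not just in bookkeeping: it studies $Z_n^{(r)}=(1-X_n)^r/\gamma_n$ (Proposition~\ref{prop:increment} and Corollary~\ref{coro:boundY}) to relate $\sup_n\ES[Y_n]=\sup_n\ES[Z_n^{(1)}]$ to $\sup_n\ES[Z_n^{(3)}]$, and then (Lemma~\ref{lemme:decompzn3}, Proposition~\ref{prop:etudePn}) performs a polynomial sign analysis of $P_n$ to show the one-step drift of $Z_n^{(3)}$ is nonpositive except on an interval of width $O(\gamma_{n+1}/\pi)$ near $x=0$, where the extra factor $(1-X_n)^{r-1}$ makes the positive contributions of order $\gamma_{n+1}^3/\pi^2+\gamma_{n+1}^4/\pi^3$. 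Those are \emph{summable} in $n$ with a $\pi$-independent sum after the cutoff $n_0(\epsilon,\pi,\gamma_1)$, which is what gives a $\pi$-uniform bound. That summability is the key structural fact your approach has no analogue of; without it the recursion cannot produce a constant (as opposed to $\sqrt n$) bound on $\ES[Y_n]$. Your intuition about over-penalization providing a uniform (in $p_2$) repulsion from $x=0$ is correct and is exactly what makes $P_n(0)<0$ in Proposition~\ref{prop:etudePn}, and your observation $\gamma_{n+1}^2\rho_{n+1}^{-1}\lesssim\gamma_{n+1}$ is used in the paper too; but both have to be plugged into the $Z_n^{(3)}$ framework, not a one-step affine recursion on $\ES[Y_n]$.
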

\begin{rmq} At the price of technicalities, $C_\sigma$ could be made explicit in terms of  $\gamma_1$ and $\rho_1$  for every $\sigma>0$. The second bound is obtained by an optimization of \tcr{$C_0(\gamma_1,\rho_1)$ (see \eqref{eq:optimC} and below)}. 
\end{rmq}

\begin{figure}[h]
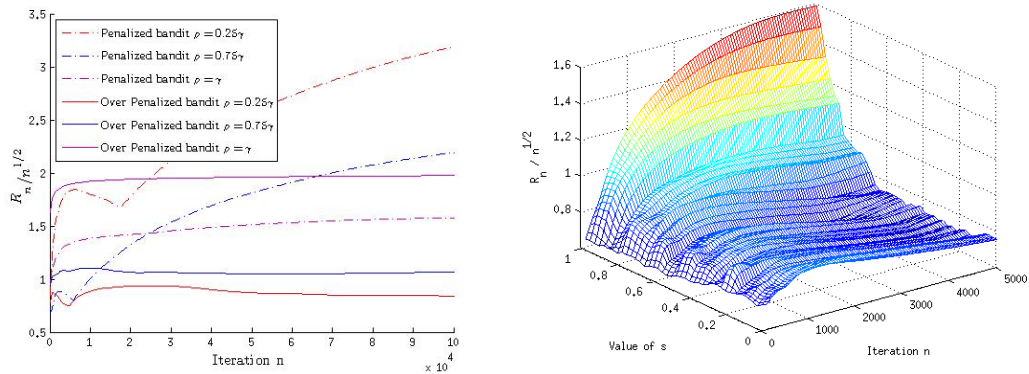

\begin{center}
\includegraphics[scale=0.35]{Revision_NSA_1.jpg}
\includegraphics[scale=0.45]{revision_NSA_3.png} 
\caption{\label{fig1:regret} \tcr{Evolution of $n\mapsto \sup_{{(p_1,p_2)\in[0,1]},p_2\le p_1}\frac{\bar{R}_n}{\sqrt{n}}$ for the over-penalized algorithm (with $\sigma=0$) and comparison with EXP3 and KL-UCB}.}
\end{center}
\end{figure}

Figure \ref{fig1:regret} presents on the left side a numerical approximation of $\displaystyle n\mapsto \sup_{p_2< p_1}R_n/\sqrt{n}$ for the penalized and over-penalized algorithms. The continuous curves indicate that the upper bound \textit{$31.1\sqrt{2}$} in Theorem \ref{theo:2bras2} is not sharp since the over-penalized NSa satisfies a uniform upper-bound on the order of $0.9  \sqrt{n}$. 
This bound is obtained with a small $\sigma$ (as pointed in Theorem \ref{theo:2bras2}), and $\gamma_n=\frac{1}{\sqrt{4+n}}=4 \rho_n$ (red line in Figure \ref{fig1:regret} (left)), suggesting that the rewards should  \textit{always} be over-penalized with $\rho_n=\frac{\gamma_n}{4 }$.

The right-hand side of Figure \ref{fig1:regret} focuses on the behavior of the regret with $\sigma$. The map $(n,\sigma)\mapsto \displaystyle{\sup_{p_1<p_2} {\bar{R}_n}/{\sqrt{n}}}$ confirms the influence of the over-penalization and indicates that to obtain optimal performances for the cumulative regret, we should use a low value of $\sigma$ between $0$ and $3/5$. The importance of this choice of $\sigma$ seems relative since the behaviour of the over-penalized bandit is stable on this interval.  
The best numerical choice is attained for $\sigma=1/4$ and $\rho_n = \frac{1}{4} \gamma_n$ and permits to achieve a long-time behavior of $\bar{R}_n/\sqrt{n}$ of the order $3/4$ (see Figure \ref{fig2:regret}, red line).

\begin{figure}[h!]
\begin{center}
\includegraphics[scale=0.5]{revision_NSA_2_bis.png} 
\caption{\label{fig2:regret} \tcr{Evolution of $n\mapsto \sup_{{(p_1,p_2)\in[0,1]},p_2\le p_1}\frac{\bar{R}_n}{\sqrt{n}}$ for the over-penalized algorithm (with $\sigma=\dfrac 1 4$) and comparison with EXP3 and KL-UCB}.}
\end{center}
\end{figure}

Finally, the statistical performances of the over-penalized NSa are compared with some classical bandit algorithms: KL-UCB algorithm (see \textit{e.g.} \cite{Garivier} and the references therein) and EXP3 (see \cite{Auer2}). These two algorithms are anytime policies that are known to be minimax optimal with a cumulative minimax regret of the order $\sqrt{n}$. Figure \ref{fig2:regret} shows that the performances of the over-penalized NSa are located between the  one of the KL-UCB
 algorithm  and of the EXP3 algorithm (our simulations suggest that the uniform bounds of KL-UCB and EXP3 are respectively $1/2$ and $3/2$). 
Also, it is worth noting that the simulation cost of the over-penalized NSa is strongly weaker than the initial UCB algorithm  (the phenomenon is increased when compared to KL-UCB, which requires an additional difficulty for the computation of the upper confidence bound at each step): the same amount of Monte-Carlo simulations for the over-penalized NSa is almost hundred times faster than the KL-UCB runs in equivalent numerical conditions.

\subsection{Convergence of the  multi-armed over-penalized bandit}
We first extend  Theorem \ref{theoLP} of \cite{Lamberton_Pages} to  the over-penalized NSa in the multi-armed situation. The result describes the pointwise convergence.
\begin{proposition}[Convergence of the multi-armed over-penalized bandit]\label{prop:multi1}
Consider $p_d \leq \ldots \leq p_2<p_{1}$  and  $\gamma_{\temps} = \gamma_1 \temps^{- \alpha}, \rho_{\temps} =\rho_1  \temps^{-\beta}$  with $(\alpha,\beta) \in (0,+\infty)$ and $(\gamma_1,\rho_1)\in(0,1)^2$. Algorithm \eqref{eq:pen_LRId2} with $\sigma\in(0,1]$ satisfies
 
\begin{itemize}
\item[$i)$] If $0<\beta \leq \alpha$ and $\alpha+\beta\leq 1$, then \tcr{$\lim_{n\rightarrow+\infty} \Pi_n=(1,0,\ldots,0)$} $a.s$.
\item[$ii)$] Furthermore, if $0<\beta < \alpha$ and $\alpha+\beta < 1$, then:
$$
\forall i\in\{2,\ldots,d\},\quad\frac{X_n^i}{\rho_{\temps}}  \longrightarrow \frac{1-\sigma p_{1}}{(d-1)(p_{1}-p_{i})} \qquad \text{a.s.}
$$
\end{itemize}
\end{proposition}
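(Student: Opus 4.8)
\textbf{Proof proposal for Proposition \ref{prop:multi1}.}

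The plan is to treat the multi-armed over-penalized NSa \eqref{eq:pen_bandit} as a stochastic approximation scheme and to transpose the two-armed arguments of \cite{Lamberton_Pages} (recalled in Theorem \ref{theoLP}) component by component. First I would write, for each suboptimal arm $i\in\{2,\dots,d\}$, the conditional increment $\ES[X_{n+1}^i - X_n^i \mid \mathcal{F}_n]$. Using the fact that $I_{n+1}$ is drawn from $\Pi_n$, that $A_{n+1}^{I_{n+1}}$ is Bernoulli with the corresponding $p_{I_{n+1}}$, and that $B_{n+1}^\sigma \sim \mathcal{B}(\sigma)$ is independent, the drift of $X_n^i$ should decompose into a ``crude'' part coming from the success term $\gamma_{n+1}[\mathds{1}_{I_{n+1}=i}-X_n^i]A_{n+1}^{I_{n+1}}$, whose conditional mean is $\gamma_{n+1}(p_i - \sum_j p_j X_n^j)X_n^i$ up to lower order, and a penalization part of order $\gamma_{n+1}\rho_{n+1}$ coming from the last bracket, whose conditional mean involves $X_n^i$ times $\big((1-\sigma p_i) - $ (a convex combination of the $1-\sigma p_j$)$\big)/(d-1)$ type terms. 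The key point to extract is that, on the event where $\Pi_n \to (1,0,\dots,0)$, the dominant balance for $X_n^i/\rho_n$ is governed by a one-dimensional ODE-type recursion $y_{n+1} \approx y_n + \gamma_{n+1}\big[(p_i-p_1)y_n + \tfrac{1-\sigma p_1}{d-1}\big] + (\text{smaller order}) + (\text{martingale})$, exactly as in the two-armed case but with $p_2-p_1$ replaced by $p_i-p_1$ and $1-p_1$ replaced by $(1-\sigma p_1)/(d-1)$; its unique attractive equilibrium is $\tfrac{1-\sigma p_1}{(d-1)(p_1-p_i)}$, which gives $ii)$.

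For $i)$ (infallibility / a.s.\ convergence of $\Pi_n$ to the vertex $(1,0,\dots,0)$), I would follow the Lyapunov / ODE-method route used in \cite{Lamberton_Pages} and \cite{PL_fast}. Set $Y_n := 1 - X_n^1 = \sum_{i\ge 2} X_n^i$. The recursion for $Y_n$ is again of crude-penalized type: in the regime where $Y_n$ is small, the success term pushes $Y_n$ toward $0$ at rate $\gamma_{n+1}(p_1 - \bar p_n)Y_n$ with $\bar p_n \le p_2 < p_1$, while the penalization term, being of smaller order $\gamma_{n+1}\rho_{n+1}$ when $\beta>0$, cannot prevent this when $\beta\le\alpha$; the boundary $0$ is then seen to be an a.s.\ limit point, and the condition $\alpha+\beta\le 1$ (i.e.\ $\sum \gamma_n\rho_n = \infty$ while $\sum \gamma_n^2 < \infty$ fails but $\sum \gamma_n\rho_n^2$, etc.) is what is needed to rule out non-convergence and convergence to other faces of the simplex, exactly as in the two-armed analysis. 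Concretely I would (1) show $X_n^1$ is a bounded submartingale perturbation so that it converges a.s.\ to some $X_\infty^1\in[0,1]$, (2) use the $\alpha+\beta\le 1$ condition together with a Robbins--Siegmund / repeated-conditioning argument to exclude $X_\infty^1 < 1$, and (3) deduce that each $X_n^i \to 0$. The uniform-over-the-simplex vertex structure means that the only subtlety beyond $d=2$ is that one must exclude convergence to lower-dimensional faces $\{X^1 = 0, \text{ some } X^i>0\}$, but the same mean-reverting estimate applied to $1-X_n^1$ handles all of them simultaneously.

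Once $i)$ is in hand, the proof of $ii)$ is a second-order refinement: on the full-probability event $\{\Pi_n\to(1,0,\dots,0)\}$, one rescales $Z_n^i := X_n^i/\rho_n$ and checks that $(Z_n^i)$ satisfies a stochastic approximation recursion with step $\gamma_n$, drift $h_i(z) = (p_i-p_1)z + \tfrac{1-\sigma p_1}{d-1}$ (the contributions of the other coordinates $X_n^j$, $j\ne i$, to the drift of $X_n^i$ are $O(\rho_n)$ relative to the main terms and hence vanish after renormalization), a remainder term of order $\rho_n + \gamma_n$ going to $0$, and a martingale increment whose conditional variance is $O(\gamma_n)$; since $\sum\gamma_n = \infty$, $\sum\gamma_n(\rho_n+\gamma_n) <\infty$ is \emph{not} automatic, so here one uses $\beta<\alpha$ and $\alpha+\beta<1$ precisely to make $\sum \gamma_n \rho_n^{-1}|\rho_{n+1}-\rho_n| <\infty$ and the other error series summable, then invokes the a.s.\ convergence theorem for one-dimensional stochastic approximation with a globally attractive equilibrium (e.g.\ from \cite{Duflo}) to conclude $Z_n^i \to \tfrac{1-\sigma p_1}{(d-1)(p_1-p_i)}$.

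The main obstacle I anticipate is step (2)--(3) of part $i)$: rigorously excluding a.s.\ convergence of $\Pi_n$ to the non-optimal faces of the simplex. In the two-armed case this reduces to showing $\PE(X_\infty = 0) = 0$ via a careful ``non-confinement'' argument exploiting the diffusive size of the noise in the delicate regime $\alpha=\beta=1/2$; in the multi-armed case one must run this argument uniformly over all the boundary faces, and check that the penalization term — which is now split by the factor $1/(d-1)$ among the losing arms — still provides enough excitation. I expect this to go through by essentially the same estimates as in \cite{Lamberton_Pages} (the relevant scalar quantity $1-X_n^1$ obeys the same type of inequality), but it is the technically heaviest point and the place where the hypotheses $0<\beta\le\alpha$, $\alpha+\beta\le1$ are actually consumed.
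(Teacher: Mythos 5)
Your proposal captures the right architecture, and the crux is the same as the paper's: the penalization applied to arm $1$ is strictly positive on the trap $\{X^1_\infty=0\}$ (since $\kappa_1(0,x_2,\dots,x_d)=\tfrac{1}{d-1}\sum_{j\ge2}(1-\sigma p_j)x_j^2>0$ on the simplex face), and $h_1\ge0$ plus $\sum_n\gamma_n\rho_n=\infty$ — this is exactly where $\alpha+\beta\le1$ is consumed — then force $X_n^1$ away from $0$, a contradiction; the paper executes this last step through a Toeplitz-lemma estimate of $\sum_j\gamma_j\rho_j\kappa_1(X_{j-1})$. You also correctly identify the renormalized drift $h_i(z)=(p_i-p_1)z+\tfrac{1-\sigma p_1}{d-1}$ and its equilibrium $\tfrac{1-\sigma p_1}{(d-1)(p_1-p_i)}$, which matches the statement (for $(ii)$ the paper simply cites Lamberton--Pag\`es, so your rescaled one-dimensional SA sketch is consistent with what the paper does). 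Where the route differs is the preliminary step: the paper first identifies the equilibria of $\dot x=h(x)$ as the vertices $\delta^1,\dots,\delta^d$ of the simplex, exhibits a local Lyapunov function $V(x)=\sum_{j\ge2}x_j^2$ that certifies stability of $\delta^1$ and instability of the others, and applies the Kushner--Clark theorem to pin the a.s.\ limit of $\Pi_n$ to a vertex before discarding the bad ones. Your step (1), "show $X_n^1$ is a bounded submartingale perturbation so that it converges," is too optimistic as stated: $h_1$ is indeed nonnegative, but $\kappa_1$ changes sign on the interior of the simplex and $\sum_n\gamma_n\rho_n=\infty$, so $X_n^1$ is not a bounded-variation perturbation of a submartingale; making (1) rigorous essentially forces you back to an ODE/Kushner--Clark argument, which is why the paper goes there directly. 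Relatedly, "Robbins--Siegmund" in your step (2) is not the tool that performs the exclusion — it is the Toeplitz/divergence argument you describe informally in the surrounding text — and your parenthetical identification of which of $\beta<\alpha$, $\alpha+\beta<1$ buys which summability in $(ii)$ would need tightening (e.g.\ $\sum\gamma_n\rho_n^{-1}|\rho_{n+1}-\rho_n|\asymp\sum n^{-\alpha-1}$ converges as soon as $\alpha>0$, independently of those two conditions).
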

Proposition \ref{prop:multiweak} provides a description of the behavior of the \textit{normalized} NSa while considering $Y_{n,j} = \frac{X_{n,j}}{\rho_n}$. It states that  $(Y_{n,.})_{n \geq 0}$ converges to the dynamics of a \textit{Piecewise Deterministic Markov Process} (referred to as PDMP below).

\begin{proposition}[Weak convergence of the over-penalized NSa]\label{prop:multiweak}
Under the assumptions of Proposition \ref{prop:multi1}, if $\alpha = \beta \leq 1/2$ and $g=\gamma_1/\rho_1$, then:
$$
\frac{1}{\rho_{\temps}}\left(X_{n,2},\ldots,X_{n,d}\right) \overset{w^*}\longrightarrow \mu_{d},
$$
where $\mu_d$ is the (unique) stationary distribution of the Markov process whose  generator $\mathcal{L}_d$ acts on compactly supported functions $f$ of $\mathcal{C}^1((\mathbb{R}_+)^{d-1})$  as follows:

\begin{eqnarray}\label{generateur_dbras}
{\cal L}_d f(y_{2},...,y_{d})&=&\sum\limits_{i=2,...,d}\frac{p_{i}y_{i}}{g}(f(y_{2},...,y_{i}+g,...,y_{d})-f(y_{2},...,y_{i},...y_{d}))\nonumber\\
&+&\sum\limits_{i=2,...,d}(\frac{1-\sigma p_{1}}{d-1}-p_{1}y_{i})\partial_{i}f(y_{2},...,y_{d}).
\end{eqnarray}
\end{proposition}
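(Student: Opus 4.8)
The plan is to extend to this multi-armed, $\sigma$-over-penalized setting the method of Lamberton \& Pages behind Theorem \ref{theoLP}$(iii)$; the genuinely new points are the $(d-1)$-dimensional jump structure and the parameter $\sigma$. Set $Y_n^i=X_n^i/\rho_n$ and $\mathbf{Y}_n=(Y_n^2,\dots,Y_n^d)$; since $X_n^i\ge0$ and $\rho_n\sum_{i\ge2}Y_n^i=1-X_n^1\le1$, the chain $(\mathbf{Y}_n)$ lives in $(\mathbb{R}_+)^{d-1}$. Dividing the increment in \eqref{eq:pen_bandit} by $\rho_n$, substituting $X_n^i=\rho_n Y_n^i$, and exploiting that the assumption $\alpha=\beta$ forces $\gamma_{n+1}/\rho_n=g+O(1/n)$ and $\rho_n/\rho_{n+1}=1+O(1/n)$ with $1/n=o(\gamma_{n+1})$ (so the renormalization correction is subdominant), one is led to the key one-step identity: for every $f\in\mathcal{C}^1$ with compact support,
\[
\mathbb{E}\big[f(\mathbf{Y}_{n+1})-f(\mathbf{Y}_n)\,\big|\,\mathcal{F}_n\big]=\gamma_{n+1}\,\mathcal{L}_d f(\mathbf{Y}_n)+R_n^f,\qquad \mathbb{E}\,|R_n^f|=o(\gamma_{n+1}).
\]
Here the jump part of $\mathcal{L}_d$ in \eqref{generateur_dbras} comes from the rare event $\{I_{n+1}=i,\ A^{i}_{n+1}=1\}$, of conditional probability $X_n^i p_i\simeq(\gamma_{n+1}/g)\,Y_n^i p_i$, which triggers a jump of size $\simeq g$ in the $i$-th coordinate; the drift part comes from $\{I_{n+1}\neq i\}$, dominated --- thanks to infallibility, $X_n^1\to1$ --- by $\{I_{n+1}=1\}$, which after averaging $1-A^1_{n+1}B^\sigma_{n+1}$ produces $\tfrac{1-\sigma p_1}{d-1}-p_1 Y_n^i$, plus the first-order Taylor term of the jump. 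The estimate on $R_n^f$ follows by Taylor expansion and by splitting on $\{\|\mathbf{Y}_n\|\le M\}$, where the $O(1/n)$ corrections and the $O(\rho_n^2)$ contributions of the events $\{I_{n+1}=j\}$, $j\in\{2,\dots,d\}\setminus\{i\}$, are uniformly negligible, and on $\{\|\mathbf{Y}_n\|>M\}$, controlled through $\|f\|_\infty+\|\nabla f\|_\infty$ and the moment bound below (letting $M\to\infty$).

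Next I would prove a uniform a priori bound $\sup_n\mathbb{E}[V(\mathbf{Y}_n)]<\infty$ for a quadratic Lyapunov function $V(y)=\sum_{i\ge2}y_i^2$. The same one-step analysis applied to $V$ (whose increments are bounded), together with the identity $1-X_n^1=\rho_n\sum_{i\ge2}Y_n^i$ and the dissipativity $\mathcal{L}_d V\le b-aV$ with $a=p_1-p_2>0$, gives $\mathbb{E}[V(\mathbf{Y}_{n+1})\mid\mathcal{F}_n]\le V(\mathbf{Y}_n)+\gamma_{n+1}(C-aV(\mathbf{Y}_n))+\gamma_{n+1}\,\eta_n$, where the discrete correction $\eta_n$ involves only $\rho_n(1-X_n^1)$-type terms. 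By Proposition \ref{prop:multi1}$(i)$, $X_n^1\to1$ a.s., hence $\rho_n(1-X_n^1)\to0$; localizing after the a.s.\ finite time where $\eta_n$ is absorbed into $aV(\mathbf{Y}_n)/2$ (and handling the earlier, random, excursion by a routine argument), one obtains $\mathbb{E}[V(\mathbf{Y}_{n+1})\mid\mathcal{F}_n]\le(1-c\gamma_{n+1})V(\mathbf{Y}_n)+C\gamma_{n+1}$ for large $n$; since $\gamma_n\to0$ and $\sum_n\gamma_n=\infty$, iteration yields $\sup_n\mathbb{E}[V(\mathbf{Y}_n)]<\infty$. In particular $(\mathrm{Law}(\mathbf{Y}_n))_n$ is tight on $(\mathbb{R}_+)^{d-1}$ and uniformly integrable.

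Then I would identify the weak limit points and conclude. Interpolate $\mathbf{Y}$ along $\tau_n=\sum_{k\le n}\gamma_k\to\infty$. The one-step identity makes $f(\mathbf{Y}_n)-\sum_{k<n}\gamma_{k+1}\mathcal{L}_d f(\mathbf{Y}_k)$ an approximate martingale whose quadratic variation (of order $\sum_k\rho_k$) is that of the compensated jump martingale of the PDMP; combined with the moment bound (which controls the linearly unbounded jump rate $y\mapsto\sum_i p_i y_i/g$) and an Aldous--Rebolledo tightness criterion in $D([0,T],(\mathbb{R}_+)^{d-1})$, this shows that whenever $m_k,n_k\to\infty$ with $\tau_{n_k}-\tau_{m_k}\to T$ and $\mathrm{Law}(\mathbf{Y}_{m_k})\Rightarrow\theta$, the interpolated path over $[\tau_{m_k},\tau_{m_k}+T]$ converges in law to the PDMP generated by $\mathcal{L}_d$ started from $\theta$ --- the martingale problem for $\mathcal{L}_d$ being well-posed thanks to the Lyapunov inequality $\mathcal{L}_d V\le b-aV$ --- and hence $\mathrm{Law}(\mathbf{Y}_{n_k})\Rightarrow P_T\theta$, where $(P_t)_{t\ge0}$ is the PDMP semigroup. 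Now let $\nu$ be any subsequential weak limit of $(\mathrm{Law}(\mathbf{Y}_n))$; using tightness, pick along a further subsequence $m_k$ with $\tau_{n_k}-\tau_{m_k}\to T$ and $\mathrm{Law}(\mathbf{Y}_{m_k})\Rightarrow\theta$, so $\nu=P_T\theta$. Since $\int V\,d\theta\le\sup_n\mathbb{E}[V(\mathbf{Y}_n)]<\infty$ and the ergodicity estimates of Section \ref{sec:ergodicity} provide the uniqueness of $\mu_d$ together with a contraction of the form $W_1(P_T\theta,\mu_d)\le Ce^{-cT}\big(1+\int V\,d\theta\big)$ (Wasserstein-$1$), letting $T\to\infty$ forces $\nu=\mu_d$. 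Every subsequential limit thus equals $\mu_d$, which is exactly the asserted convergence $\rho_n^{-1}(X_n^2,\dots,X_n^d)\overset{w^*}{\longrightarrow}\mu_d$.

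I expect the main obstacle to be the control underlying the one-step identity, precisely because of its two-time-scale nature: $\{I_{n+1}=i\}$ is an $O(\rho_n)$-rare event that nonetheless carries a \emph{macroscopic} jump (of size $\simeq g$ in the $\mathbf{Y}$-variable), and it is only the critical scaling $\alpha=\beta$ that makes $\rho_n$ comparable to $\gamma_{n+1}$, so that this rare jump and the $O(\gamma_{n+1})$ smooth drift recombine into a single generator $\mathcal{L}_d$; proving $\mathbb{E}|R_n^f|=o(\gamma_{n+1})$ with the error uniform in the current state is where the work lies, and it is this uniformity that makes the a priori moment bound --- and hence the infallibility of Proposition \ref{prop:multi1}$(i)$ --- indispensable. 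A secondary difficulty is the passage from the Ces\`{a}ro-type information that the summation argument directly provides to genuine convergence in distribution: this is possible only because the limiting PDMP is uniquely ergodic with a quantitative mixing rate, which is exactly what the ergodicity analysis of Section \ref{sec:ergodicity} delivers.
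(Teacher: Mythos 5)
Your proposal follows the same route as the paper: establish the one-step generator identity $\mathbb{E}\big[f(\mathbf{Y}_{n+1})-f(\mathbf{Y}_n)\,\big|\,\mathcal{F}_n\big]=\gamma_{n+1}\,\mathcal{L}_d f(\mathbf{Y}_n)+o(\gamma_{n+1})$, then combine tightness of the pseudo-trajectories with well-posedness of the $(\mathcal{L}_d,\mathcal{C}^1_K)$ martingale problem and uniqueness of the invariant measure to conclude. The paper details only the generator identity and delegates tightness and the uniqueness/identification step to \cite{Lamberton_Pages}; your quadratic Lyapunov moment bound and the quantitative use of the Wasserstein contraction from Section~\ref{sec:ergodicity} are a legitimate way of carrying out those deferred steps.
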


\subsection{Ergodicity of the limiting process}
In this section, we focus on the long time behavior of the limiting Markov process that appears (after normalization) in Proposition \ref{prop:multiweak}. As mentioned before, this process is a PDMP and its long time behavior can be carefully studied with some arguments in the spirit of \cite{Malrieu}. We also learned about the existence of a close study in the PhD thesis of Florian Bouguet (some details may be found in \cite{bouguet}). Such properties are stated for both the one-dimensional and the multidimensional cases. 
\subsubsection{One-dimensional case} 
Setting
$$a = 1-p_{1}, \;b=p_{1},\; g=\frac{\gamma_1}{\rho_1}, c=\frac{p_{2}}{g},$$
 the generator $\mathcal{L}$ given by Proposition \ref{prop:multiweak} may be written as: 
\begin{eqnarray}\label{generateur}
\forall f \in \mathcal{C}^1(\ER_+^*,\ER) \quad 
\mathcal{L} f(x) = \underbrace{(a-bx)f'(x)}_{\text{deterministic } \text{part}}+\underbrace{cx}_{\text{jump }\text{rate}}\underbrace{(f(x+g)-f(x))}_{\text{jump }\text{size}}.
\end{eqnarray}

In what follows, we will assume that $a,$ $b$, $c$ and $g$ are  positive numbers. We can see in $\mathcal{L}$ two parts.
On the one hand, the deterministic flow that guides the PDMP between the jumps is given by:
$$\left\{
\begin{array}{ll}
\partial_{t}\phi(x,t) &= (a-bx)\partial_{x}\phi(x,t)\\
\phi(x,0) &= x \in\ER_+^*
\end{array}
\right.$$
so that 
$$\phi(x,t) = \frac{a}{b} + \left(x-\frac{a}{b}\right)e^{-bt}. $$
Hence,  if $x>\frac{a}{b}$ (resp. $x<\frac{a}{b}$), $t\mapsto \phi(x,t)$ decreases (resp. increases) and converges exponentially fast to $\frac{a}{b}$.\smallskip

On the other hand,  the PDMP possesses some positive jumps that occur  with a Poisson intensity ``$c.x$'', whose size is deterministic and equals to $g$. 

From the finiteness and positivity of $g$, it is easy to show that for every positive starting point, the process is $a.s.$ well-defined on $\ER_+$, positive and does not explode in finite time. The fact that the size of the jumps is deterministic is less important and what follows could  easily be generalized to a random size $g$ (under adapted integrability assumptions). 
 In Figure \ref{fig:traj} below, some paths of the process are represented with different values of the parameters.
\begin{figure}[h]
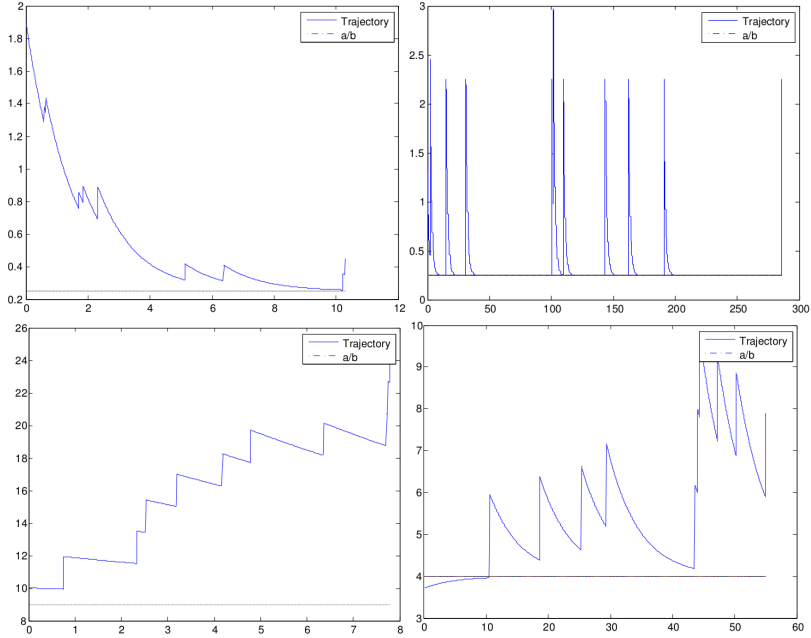

\begin{center}
\includegraphics[scale=0.2]{g01_08_02.png}\includegraphics[scale=0.24]{g2_08_02.png} \\
\includegraphics[scale=0.2]{g2_01_03.png}\includegraphics[scale=0.2]{g2_02_01.png} 
\caption{\label{fig:traj} Exact simulation of trajectories of a process driven by (\ref{generateur}) when $g=0.1,a=0.2,b=0.8,c=0.2$ (top left) $g=2,a=0.2,b=0.8,c=0.1$ (top right), $g=2, a=0.9,b=0.9,c=0.15$ (bottom left) and $g=2,a=0.8,b=0.2,c=0.05$ (bottom right).}
\end{center}
\end{figure}

\subsubsection{Convergence results}
As pointed out in Figure \ref{fig:traj}, the long-time behavior of the process certainly depends on the relationship between the mean-reverting effect generated by ``$-bx$'' and the frequency and size of the jumps.
\paragraph{Invariant measure}
The process \eqref{generateur} possesses a unique invariant distribution if $b-cg>0$. Actually, the existence is ensured by the fact that $V(x)=x$ is a Lyapunov function for the process since
\begin{equation*}
\forall x\in\ER_+^*,\quad  {\cal L} V(x)= a-(b-cg)x=a-(b-cg) V(x)
\end{equation*}
Among other arguments, the uniqueness is ensured by Theorem \ref{theo:pdmp} (the convergence in Wasserstein distance of the process toward the invariant distribution
implies in particular its uniqueness). We denote it by $\mu_\infty$ below. It could  also be shown that ${\rm Supp}(\mu_\infty)=(a/b,+\infty)$, that the process is strongly ergodic on $(a/b, + \infty)$ (see \cite{EthierKurtz} for some background) and that if $b-cg>0$, the process explodes when $t\rightarrow+\infty$ (this case corresponds to the bottom left-hand side of Figure \ref{fig:traj}). Finally, it should be noted that for the limiting PDMP of the bandit algorithm,
$$b-cg=p_1-p_2=\pi$$
and thus, the ergodicity condition coincides with the positivity of $\pi$.
\paragraph{Wasserstein results}
We aim to derive rates of convergence for the PDMP toward $\mu_{\infty}$ for two distances, namely the Wasserstein distance and the total variation distance. Rather different ways to obtain such results exist using coupling arguments or PDEs. We  use coupling techniques here that are consistent with the work of \cite{BCG+} and \cite{Cloez}. Before stating our results, let us recall that the $p$-Wasserstein distance is defined for any probability measures $\mu$ and $\nu$ on $\mathbb{R}^d$ by:
\begin{eqnarray*}
\mathcal{W}_{p}(\mu,\nu) = \inf\left\{\mathbb{E}\left(|X-Y|^p)\right)^{\frac{1}{p}} \, \vert \, \mathcal{L}(X) = \mu, \mathcal{L}(Y) = \nu\right\}.
\end{eqnarray*}
Designating $\mu_0$ as the initial distribution of the PDMP and $\mu_t$ as its law at time $t$, we now state the main result on the PDMP in dimension one driven by \eqref{generateur}.

\begin{theo}[One dimensional PDMP]\label{theo:pdmp}  Let $p\ge1$ and denote for every $t\ge0$ $\mu_t := \mathcal{L}(X_t^{\mu_0})$ where $(X_t^{\mu_0})$ is a Markov process driven by \eqref{generateur} with initial distribution  $\mu_{0}$ (with support included in $\mathbb{R}_+^*$). If $p=1$, we have  
$$\left|\int x (\mu_0-\mu_\infty)(dx)\right| e^{-\pi t}\le \mathcal{W}_{1}(\mu_{t},\mu_{\infty})\le \mathcal{W}_{1}(\mu_{0},\mu_{\infty})e^{-{t\pi}}$$
and if $p>1$, a constant $\gamma_p$ exists such that
\begin{eqnarray*}
\mathcal{W}_{p}(\mu_{t},\mu_{\infty})&\leq & \gamma_p e^{-\frac{t\pi}{p}}.
\end{eqnarray*}
 where $(\gamma_p)_{p \geq 1}$ satisfies the recursion $\gamma_{p}^p = \gamma_{p-1}^{p-1} [p a + (1+g)^{p}]$.\end{theo}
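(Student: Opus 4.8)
The natural approach is a \emph{coupling} argument. I would build two copies $(X_t)$ and $(\tilde X_t)$ of the PDMP, started respectively from $\mu_0$ and $\mu_\infty$ (or more generally from two arbitrary initial laws), in such a way that they are driven by the \emph{same} randomness as much as possible, and then control $\ES[|X_t - \tilde X_t|^p]$. Between jumps, each process follows the deterministic flow $\phi(\cdot,t) = a/b + (x-a/b)e^{-bt}$, so the difference of two solutions of the same ODE contracts: if no jump occurs, $|X_t - \tilde X_t| = |X_0 - \tilde X_0| e^{-bt}$. The jumps have \emph{deterministic} size $g$ and state-dependent rate $cx$; the key idea is to use a \emph{synchronous-type coupling} of the two Poisson clocks. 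Assume w.l.o.g.\ $X_t \ge \tilde X_t$ (the ordering is preserved, which should be checked: the flow preserves order, an up-jump of the lower process by $g$ with the higher one not jumping can only reduce the gap without crossing since jumps are of fixed size $g$ and the gap will be compared to $g$ — more carefully, one couples so that whenever the lower process jumps the upper one jumps too). Run a common Poisson clock at rate $c\tilde X_t$ making \emph{both} jump simultaneously (the gap is unchanged by such a joint jump), and an extra independent clock at rate $c(X_t-\tilde X_t)$ making \emph{only} the upper process jump, which \emph{increases} the gap by $g$. This is the standard construction à la \cite{BCG+}, \cite{Cloez}.

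For $p=1$: writing $D_t = X_t - \tilde X_t \ge 0$, between the jump events $D$ decays at rate $b$, and the extra clock (rate $cD_t$, jump $+g$) contributes, in expectation, $\tfrac{d}{dt}\ES[D_t] = -b\,\ES[D_t] + cg\,\ES[D_t] = -(b-cg)\ES[D_t] = -\pi\,\ES[D_t]$. Hence $\ES[D_t]\le D_0 e^{-\pi t}$ pointwise in the coupling, and optimizing over couplings of $\mu_0,\mu_\infty$ gives $\mathcal{W}_1(\mu_t,\mu_\infty)\le \mathcal{W}_1(\mu_0,\mu_\infty)e^{-\pi t}$. The lower bound is immediate: the map $x\mapsto x$ is $1$-Lipschitz, so $\mathcal{W}_1(\mu_t,\mu_\infty)\ge |\int x\,(\mu_t-\mu_\infty)(dx)|$, and since $\mathcal{L}V = a-\pi V$ with $V(x)=x$ one has $\int x\,\mu_t(dx) - \int x\,\mu_\infty(dx) = e^{-\pi t}\big(\int x\,\mu_0(dx)-\int x\,\mu_\infty(dx)\big)$, giving the stated lower bound.

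For $p>1$: apply the same coupling and compute $\tfrac{d}{dt}\ES[D_t^p]$. The flow contributes $-bp\,\ES[D_t^p]$. The joint-jump clock (rate $c\tilde X_t = c(X_t-D_t)$... rather $c\tilde X_t$) leaves $D$ unchanged, hence contributes $0$. The extra clock, at rate $cD_t$, replaces $D_t$ by $D_t+g$, contributing $c\,\ES[D_t\big((D_t+g)^p - D_t^p\big)]$. Expanding $(D_t+g)^p - D_t^p$ and bounding crudely — this is where the recursion $\gamma_p^p = \gamma_{p-1}^{p-1}[pa + (1+g)^p]$ must come out — one gets a differential inequality of the form $\tfrac{d}{dt}\ES[D_t^p] \le -\pi\,\ES[D_t^p] + (\text{lower-order terms in }\ES[D_t^{p-1}])$, where the lower-order terms are controlled using the already-established $\mathcal{W}_{p-1}$ bound and a moment bound on $X_t$ coming from the Lyapunov function $V(x)=x$ (the $pa$ in the recursion is the footprint of $\mathcal{L}V = a-\pi V$). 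Gronwall then yields $\ES[D_t^p]\le \gamma_p^p\, e^{-\pi t}$, i.e.\ $\mathcal{W}_p(\mu_t,\mu_\infty)\le \gamma_p\, e^{-t\pi/p}$, with the constant defined by the advertised recursion after taking $\mu_0$ a point mass and optimizing.

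The main obstacle, in my view, is twofold: first, rigorously justifying the order-preserving coupling — one must ensure the two processes can be coupled so that $X_t \ge \tilde X_t$ for all $t$ when the initial laws are stochastically ordered, and that the ``extra clock at rate $cD_t$, upper-process-only'' construction is legitimate (a standard but slightly delicate thinning/superposition argument for PDMPs, with a verification that the gap $D_t$ stays nonnegative and that jumps do not overshoot). Second, in the $p>1$ case, getting the \emph{clean} recursion $\gamma_p^p = \gamma_{p-1}^{p-1}[pa+(1+g)^p]$ rather than just \emph{some} exponential bound requires a careful rather than wasteful estimate of $\ES[D_t(D_t+g)^p]$: one should expand, isolate the $D_t^{p+1}$... no — isolate the $\ES[D_t^p]$ term to combine with $-bp\,\ES[D_t^p]$ and reconstitute $-\pi$, then bound the remaining $\ES[D_t^{p-j}g^j]$ terms by monotone interpolation against $\ES[D_t^{p-1}]$ and against the moment bound from $V$. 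I would carry out $p=1$ first in full, then set up the induction on $p$ using the $\mathcal{W}_{p-1}$ estimate as the input.
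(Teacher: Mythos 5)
Your plan is, in substance, the paper's own proof: the authors write down precisely the coupling you describe as a two-variable generator $\LW$ (joint jump at the lower process's rate $cy$, leaving the gap unchanged; single jump of the upper one at rate $c(x-y)$, increasing the gap by $g$), use monotonicity to drop the absolute value, compute $\LW(x-y)^p$ and reduce to a triangular system of linear ODEs for $\beta_p(t)=\ES[(X_t-Y_t)^p]$, and obtain the lower bound for $p=1$ exactly as you do via $\int x\,\mu_t(dx)$. The only small inaccuracy in your sketch is the statement $\tfrac{d}{dt}\ES[D_t^p]\le -\pi\,\ES[D_t^p]+\text{lower order}$ — the coefficient of the homogeneous term is $-p\pi$, not $-\pi$ (the $-\pi$ rate for the solution then emerges from the slowest source term $\beta_1$ in the recursion, not from the homogeneous decay), and you need this correct coefficient to avoid a spurious resonance when integrating the ODE; with it, the induction you outline goes through as in the paper.
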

\begin{rmq} If $p=1$, the lower and upper bounds imply the optimality of the rate obtained in the exponential. For $p>1$, the optimality of the exponent $e^{- \pi t/p}$ is still an open question.   
\end{rmq}
We now give a corollary for the limiting process that appears in Proposition \ref{prop:multiweak}.
\begin{coro}[Multi-dimensional PDMP]\label{cor:multipdmp}
Let  $(Y_t)_{t\ge0}$ be the PDMP driven by (\ref{generateur_dbras}) with initial distribution $\mu_0\in(\mathbb{R}_+^*)^d$.  Then, the conclusions of Theorem \ref{theo:pdmp} hold with $\pi=p_1-p_2$.
\end{coro}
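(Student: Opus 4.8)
The plan is to exploit the manifest product structure of the generator $\mathcal{L}_d$ in \eqref{generateur_dbras}. Each term of the first sum moves only the coordinate $y_i$ and has rate $p_iy_i/g$ depending on $y_i$ alone, and each term of the second sum is a drift $\bigl(\tfrac{1-\sigma p_1}{d-1}-p_1y_i\bigr)$ acting on $y_i$ alone; hence $\mathcal{L}_d=\sum_{i=2}^d \mathcal{L}^{(i)}$, where $\mathcal{L}^{(i)}$ acts on the $i$-th variable only. Equivalently, by superposition/thinning of the jump clock (total rate $\sum_{i=2}^d p_iy_i/g$ splits into $d-1$ independent inhomogeneous Poisson clocks, the $i$-th of rate $p_iy_i/g$), the PDMP $(Y_t)$ can be realized as a vector of $d-1$ independent one-dimensional PDMPs, the $i$-th driven by \eqref{generateur} with $a=\tfrac{1-\sigma p_1}{d-1}$, $b=p_1$, $c=p_i/g$ and jump size $g$; since $\mu_0$ is supported in $(\mathbb{R}_+^*)^d$ its marginals are supported in $\mathbb{R}_+^*$, so Theorem \ref{theo:pdmp} applies coordinatewise. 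The invariant law factorizes, $\mu_\infty=\bigotimes_{i=2}^d\mu_\infty^{(i)}$, the ergodicity condition $b-cg>0$ becomes $p_1-p_i>0$ (true for all $i\ge2$), and since $p_2=\max\{p_2,\dots,p_d\}$ the slowest coordinate is $i=2$ with rate $\pi_2=p_1-p_2=\pi\le p_1-p_i$ for every $i$.

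For the upper bound I would run $(Y_t)$ from $\mu_0$ and, independently across coordinates, couple each $Y_t^{(i)}$ to a stationary copy $\tilde Y_t^{(i)}\sim\mu_\infty^{(i)}$ via the near-optimal coupling from the proof of Theorem \ref{theo:pdmp}; then $(\tilde Y_t^{(i)})_{i=2}^d$ has law $\mu_\infty$ and, for any $p\ge1$,
$$\mathcal{W}_p(\mu_t,\mu_\infty)^p\le\mathbb{E}\bigl[|Y_t-\tilde Y_t|^p\bigr]\le\mathbb{E}\Bigl[\Bigl(\sum_{i=2}^d|Y_t^{(i)}-\tilde Y_t^{(i)}|\Bigr)^p\Bigr]\le(d-1)^{p-1}\sum_{i=2}^d\mathbb{E}\bigl[|Y_t^{(i)}-\tilde Y_t^{(i)}|^p\bigr],$$
using $|\cdot|_2\le|\cdot|_1$ and convexity of $x\mapsto x^p$. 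Theorem \ref{theo:pdmp} bounds $\mathbb{E}[|Y_t^{(i)}-\tilde Y_t^{(i)}|^p]\le\gamma_{p,i}^p\,e^{-\pi_i t}\le\gamma_{p,i}^p\,e^{-\pi t}$ (the recursion multiplier $pa+(1+g)^p$ being common to all coordinates, only the base constants $\gamma_{1,i}=\mathcal{W}_1(\mu_0^{(i)},\mu_\infty^{(i)})$ differ), so with $\Gamma_p:=\max_{2\le i\le d}\gamma_{p,i}$ one gets $\mathcal{W}_p(\mu_t,\mu_\infty)\le(d-1)\Gamma_p\,e^{-\pi t/p}$. For $p=1$ one keeps the sharper estimate $\mathcal{W}_1(\mu_t,\mu_\infty)\le\sum_{i=2}^d\mathcal{W}_1(\mu_0^{(i)},\mu_\infty^{(i)})\,e^{-\pi t}$, i.e. the exact exponent $e^{-\pi t}$.

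For the lower bound when $p=1$, I would test against the $1$-Lipschitz function $y\mapsto y_2$ through Kantorovich--Rubinstein duality, so $\mathcal{W}_1(\mu_t,\mu_\infty)\ge\bigl|\int y_2\,(\mu_t-\mu_\infty)(dy)\bigr|$. A one-line computation gives $\mathcal{L}_d(y\mapsto y_2)=\tfrac{1-\sigma p_1}{d-1}-(p_1-p_2)y_2$, so $m(t):=\mathbb{E}_{\mu_t}[Y^{(2)}]$ solves $m'(t)=\tfrac{1-\sigma p_1}{d-1}-\pi\,m(t)$ (the integrability needed to differentiate under the expectation follows from the one-dimensional Lyapunov/moment bounds already established), whence $m(t)-\mathbb{E}_{\mu_\infty}[Y^{(2)}]=(m(0)-\mathbb{E}_{\mu_\infty}[Y^{(2)}])e^{-\pi t}$ and therefore $\mathcal{W}_1(\mu_t,\mu_\infty)\ge\bigl|\int y_2\,(\mu_0-\mu_\infty)(dy)\bigr|\,e^{-\pi t}$, matching the upper exponent.

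I do not expect a serious obstacle: the corollary is essentially the product of the one-dimensional result over independent coordinates. The two points demanding care are the passage from the single jump clock of the $d$-dimensional PDMP to $d-1$ independent coordinate clocks (superposition/thinning, together with the verification that the drift genuinely decouples), and the observation that the exponential rate is governed solely by the worst coordinate $i=2$; the $d$-dependent prefactors $(d-1)\Gamma_p$ are harmless since $d$ is fixed.
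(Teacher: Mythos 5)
Your proposal is correct and follows essentially the same route the paper sketches: you exploit the tensorized form of $\mathcal{L}_d$ to view $(Y_t)$ as a vector of $d-1$ independent one-dimensional PDMPs with the coefficients $a_i,b_i,c_i$ of \eqref{aibici}, apply Theorem \ref{theo:pdmp} coordinatewise, and observe that the rate is governed by the worst coordinate $i=2$, giving $\pi=p_1-p_2$. You simply fill in the details (the $\ell^1$/convexity prefactor for $p>1$, the projection test function for the lower bound) that the paper explicitly leaves to the reader.
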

The proof is almost obvious due to the ``tensorized'' form of the generator ${\cal L}_d$. Actually, for every starting point $y=(y_2,\ldots,y_d)$,
all the coordinates $(Y_t^i)_{t\ge0}$ are independent one-dimensional PDMPs with generator ${\cal L}$ defined by \eqref{generateur} with  
\begin{equation}\label{aibici}
a_i=\frac{1-\sigma p_1}{d-1},\quad  b_i=p_1 \quad \textnormal{and}\quad c_i=p_i/g.
\end{equation}
 The result then   easily follows from Theorem \ref{theo:pdmp} with a global rate 
given by $\min\{b_i-c_i g, i=2,\ldots,d\}= p_1-p_2$. The details are left to the reader.
\subsection{Total variation results}

When some bounds are available for the Wasserstein distance, a classical way to deduce an upper bound of the total variation is to build a two-step coupling.
 In the first step, a Wasserstein coupling is used to bring the paths sufficiently close (with a probability controlled by the Wasserstein bound). In a second step, we use a total variation coupling to try to stick the paths with a high probability. In our case, the jump size is deterministic and sticking the paths implies a non trivial coupling of the jump times. Some of the ideas to obtain the results below are in the spirit of \cite{BCG+}, who follows this strategy for the TCP process.

\begin{theo}\label{distance var totale} Let $\mu_0$ be a starting distribution with moments of any order. Then, for every $\varepsilon>0$, a $C_\varepsilon>0$ exists such that:
 $$\Vert \mu_0 P_{t}-\mu_\infty P_{t}\Vert_{TV} \leq C_\varepsilon e^{-(\alpha\pi-\varepsilon)  t} \quad\textnormal{with } \alpha=\frac{1}{2+\frac{b\pi}{ac}}.$$
\end{theo}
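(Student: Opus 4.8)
The plan is to implement the two-step coupling strategy explicitly described in the paragraph preceding the statement. Fix two initial distributions $\mu_0$ and $\mu_\infty$ and construct on the same probability space two copies $(X_t)$ and $(\tilde X_t)$ of the PDMP driven by \eqref{generateur}. The coupling runs in cycles. During a \emph{Wasserstein phase} of prescribed length $T$ (to be optimized), I would use the synchronous coupling of the two processes (common jump attempts via a thinned Poisson process of rate $c\max(X_s,\tilde X_s)$, common deterministic flow) so that, by the contraction estimate of Theorem~\ref{theo:pdmp} applied coordinatewise to the gap, $|X_{t}-\tilde X_{t}|$ decays like $e^{-\pi t}$ in expectation; after time $T$ the two paths are within distance $\eta\sim e^{-\pi T}$ of each other with high probability (by Markov's inequality, using that $\mu_0$ has moments of all orders so that $\mathcal W_1(\mu_0,\mu_\infty)<\infty$ and in fact higher moments are controlled uniformly along the flow). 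Conditionally on the two paths being $\eta$-close, I would then run a \emph{coalescence phase}: attempt to glue the two trajectories into a single one.

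The coalescence step is where the real work lies, because the jumps have deterministic size $g$: two paths at distinct positions $x<\tilde x$ can never be made equal simply by jumping simultaneously — one must arrange that one path jumps and the other does not, during a time window in which the deterministic flows $\phi(x,\cdot)$ and $\phi(\tilde x,\cdot)$ have brought the \emph{post-jump} configurations into alignment. Concretely, if at some time the gap is $\delta=\tilde x-x$, then after a jump of $X$ alone the gap becomes $g-\delta$ (if $\delta<g$), and then the flow contracts or expands this gap; alternatively one waits and lets both flow toward $a/b$. The idea is: using the freedom in the Poisson jump times of the two coordinates, one can, with probability bounded below by a constant depending on $\delta$, $g$, $a$, $b$, $c$, realize a single extra jump of exactly one path at exactly the right instant so that the post-jump positions coincide, after which the two processes are driven by the same noise forever and stay equal. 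Quantifying this success probability — it will be of order $\delta$ (the smaller the initial gap, the more likely a quick coalescence, but also one needs the jump-rate $cx$ bounded below, i.e. $x$ bounded away from $0$, which holds since $\mathrm{Supp}(\mu_\infty)=(a/b,+\infty)$ and the flow pushes $x$ above some level quickly) — and showing it does not degrade too fast is the crux. A clean way is: in a window of fixed length $\tau$, ask for exactly one jump of the bottom path and zero jumps of the top path and zero jumps thereafter until alignment, whose probability is $\gtrsim e^{-c'\tau}$ times the density factor, and tune $\tau$ so alignment occurs; the net coalescence probability per cycle is then $\ge \kappa\,\eta$ for a constant $\kappa$, or after $\log(1/\eta)$ sub-attempts becomes a constant. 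Balancing the loss $e^{-\pi T}$ in the Wasserstein phase against the geometric number of cycles needed, each of length $\sim T + \log(1/\eta)\sim T$, produces the rate $e^{-(\alpha\pi-\varepsilon)t}$ with $\alpha=1/(2+\frac{b\pi}{ac})$: the ``$2$'' reflects the two phases of comparable length, and the correction $\frac{b\pi}{ac}$ encodes how much extra time the coalescence phase needs because the jump intensity near the support's left edge $a/b$ is only $\sim c a/b$ while the mean-reversion rate is $b$ and the relevant gap decays at rate $\pi$.

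More precisely, I would set up the cycle length as $T=\frac1\pi\log(1/\eta)$ from the Wasserstein phase and a coalescence window whose length is dictated by requiring a single mismatched jump: since the jump rate of the lower path hovers around $ca/b$, one needs time $\sim \frac{b}{ca}$ to get such a jump with non-vanishing probability, and the gap to be closed has size $\lesssim\eta$ so the flow needs time $\sim\frac1b\log(1/\eta)$ more to realign; the dominant balance gives total cycle length $\asymp T(1+\frac{b\pi}{ac}) = T\cdot\frac{2+\frac{b\pi}{ac}}{\,\cdot\,}$ relative to the decay $e^{-\pi T}$ gained, yielding exactly the exponent $\alpha\pi$. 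The uniform-in-time moment bounds needed throughout (to apply Markov's inequality and to ensure the flow brings paths above $a/b$ fast) follow from $\mathcal LV_p(x)=\mathcal L x^p = pax^{p-1}-pbx^p+cx((x+g)^p-x^p)\le C_p - \pi p x^p + (\text{lower order})$, i.e. $x\mapsto x^p$ is a Lyapunov function with a spectral gap, exactly in the style already used for $V(x)=x$ in the excerpt; this is why the hypothesis ``$\mu_0$ has moments of any order'' appears. The $\varepsilon$ in the exponent absorbs the polynomial prefactors coming from these Lyapunov estimates and from summing the geometric series of cycles. The main obstacle, and the part deserving the most care, is the quantitative lower bound on the per-cycle coalescence probability: making rigorous the claim that with probability $\gtrsim\eta$ one can, by controlling a single jump time, exactly stick two PDMP paths that are $\eta$-close, given the deterministic jump size — this requires a careful change of variables between jump-time and post-jump-position and a lower bound on the relevant Poisson/flow density on the event in question.
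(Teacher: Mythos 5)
Your high-level picture --- a Wasserstein contraction phase followed by a coalescence phase in which the mismatch in jump times is exploited to make one path land exactly on the other --- is the same as the paper's. Where your sketch goes wrong is in the coalescence mechanism and, crucially, in the order of the coalescence probability. You propose arranging ``exactly one jump of the bottom path and zero jumps of the top path,'' after which the flow is supposed to realign the two. But the flow $t\mapsto \phi(\cdot,t)$ is an exponential contraction, so two distinct post-jump positions only converge asymptotically and never coincide in finite time; this one-jump-plus-wait strategy cannot produce $X_t=Y_t$. The paper's Lemma~\ref{lemma:stick} instead lets \emph{both} paths perform one jump, at times coupled through the map $\psi(t)=b^{-1}\ln\bigl(e^{bt}+\tfrac{x-y}{g}\bigr)$: if $x>y$, the process $X$ jumps first at $T_1^x$, after which the gap has size $g+(x-y)e^{-bT_1^x}$ and shrinks under the flow; at the precise instant $\psi(T_1^x)$ the gap equals $g$, and $Y$'s jump then lands it exactly on $X$. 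The coupling is the maximal coupling of $\mathrm{law}(T_1^y)$ and $\psi_*\,\mathrm{law}(T_1^x)$.

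The more serious issue is quantitative. You assert the per-cycle coalescence probability is $\gtrsim \eta$ and propose to compensate by repeating; but since each such cycle has length $\Theta\bigl(\tfrac1\pi\log\tfrac1\eta\bigr)$, a geometric number $\sim 1/\eta$ of attempts would cost exponentially long time and destroy any exponential decay (and ``$\log(1/\eta)$ sub-attempts'' does not turn probability $\eta$ per attempt into a constant). In fact the dependence is the opposite of what you wrote: the maximal-coupling overlap is a total variation overlap of two laws that merge as the gap shrinks (because $\psi\to\mathrm{id}$ and $y\to x$), and the paper's lemma yields a success probability of the form $1-O(x_0\varepsilon)-e^{-\tfrac{a}{b}cs}$ on $A_{x_0,\varepsilon}$ --- close to $1$, not to $0$. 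With that correct sign, a single pass suffices and the rate comes from balancing the three exponential terms $x_0\varepsilon$, $e^{-\tfrac{ac}{b}(t-t_1)}$, and $\varepsilon^{-1}e^{-\pi t_1}$; the factor $2$ in $\alpha=\bigl(2+\tfrac{b\pi}{ac}\bigr)^{-1}$ is the cost of the $\varepsilon^{-1}$ Markov penalty converting $\mathcal W_1\lesssim e^{-\pi t_1}$ into a probability, not ``two phases of comparable length,'' while $\tfrac{b\pi}{ac}$ comes from the $e^{-\tfrac{ac}{b}(t-t_1)}$ no-coalescence term. As it stands, the sketch would not yield the stated exponent until both the coalescence construction and the sign of its success probability are corrected.
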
 
Once again, this result can be extended to the multi-armed case.
\begin{coro}
Let  $(Y_t)_{t\ge0}$ be the PDMP driven by (\ref{generateur_dbras}) with initial distribution $\mu_0\in(\mathbb{R}_+^*)^d$  Then, the conclusions of Theorem \ref{distance var totale} hold with $\alpha \pi$ replaced by:
$$\sum_{i=2}^d \frac{1}{2+\frac{b_i \pi_i}{a_i c_i}} \pi_i$$
where $\pi_i=p_1-p_i$ and $a_i, b_i$ and $c_i$ are defined by \eqref{aibici}.
\end{coro}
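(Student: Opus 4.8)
The plan is to reduce everything to the one-dimensional statement of Theorem \ref{distance var totale} by exploiting the tensorized form of $\mathcal{L}_d$, exactly as was done for the Wasserstein Corollary \ref{cor:multipdmp}. Since \eqref{generateur_dbras} is a sum of operators each acting on a single coordinate, starting from a point $y^{(0)}=(y^{(0)}_2,\dots,y^{(0)}_d)\in(\mathbb{R}_+^*)^{d-1}$ the coordinates $(Y_t^i)_{2\le i\le d}$ are independent one-dimensional PDMPs, the $i$-th one being driven by \eqref{generateur} with the parameters $(a_i,b_i,c_i,g)$ of \eqref{aibici}. Hence $\mu_t=\bigotimes_{i=2}^d\mu_t^{(i)}$ and $\mu_\infty=\bigotimes_{i=2}^d\mu_\infty^{(i)}$, where $\mu_t^{(i)}$ is the time-$t$ marginal of the $i$-th coordinate and $\mu_\infty^{(i)}$ its invariant law, which is well defined because $b_i-c_ig=p_1-p_i=\pi_i>0$; since $\mu_0=\delta_{y^{(0)}}$ trivially has all moments, Theorem \ref{distance var totale} applies to each coordinate and yields, for every $\varepsilon>0$, constants $C_{\varepsilon,i}$ with $\|\mu_t^{(i)}-\mu_\infty^{(i)}\|_{TV}\le C_{\varepsilon,i}\,e^{-(\alpha_i\pi_i-\varepsilon)t}$, where $\alpha_i\pi_i=\pi_i/\!\left(2+b_i\pi_i/(a_ic_i)\right)$.

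The remaining task — and the heart of the matter — is to assemble these $d-1$ one-dimensional estimates into a single bound for the product laws carrying the \emph{additive} exponent $\sum_{i=2}^d\alpha_i\pi_i$. I would build a coupling of two copies of the $d$-armed PDMP, started from $\mu_0$ and from $\mu_\infty$, by running the one-dimensional two-step scheme (a Wasserstein phase bringing the $i$-th coordinate gap small, then a total-variation phase coupling the jump times so as to coalesce that coordinate) \emph{independently} on each coordinate, and by freezing a coordinate's jump clock onto the other copy as soon as that coordinate has coalesced, so that it stays coalesced forever. Then $\|\mu_0P_t-\mu_\infty\|_{TV}$ is bounded by the probability that not all coordinates have coalesced by time $t$, and the point is to estimate this probability using the independence of the coordinates far more sharply than a union bound: I expect this to come out of a Harris-type argument run with the \emph{product} Lyapunov function $V(y)=\prod_{i=2}^d V_i(y_i)$ — for which independence gives $P_tV=\prod_{i=2}^d P_t^{(i)}V_i$, so the per-coordinate contraction factors, and hence their rates, compound — together with the minorization obtained by tensoring the one-dimensional minorizations over a product of sublevel sets.

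The main obstacle is precisely this upgrade from the minimum to the sum of the rates: a crude combination of the $d-1$ one-dimensional bounds (union bound over coordinates) would only give $\min_{2\le i\le d}\alpha_i\pi_i$, which is why the Wasserstein corollary states merely a ``global rate'', so the argument must genuinely use that the $d-1$ coordinate processes are independent; pinning down the right multiplicative quantity and carrying the constants $C_{\varepsilon,i}$ and the sublevel-set radii through the tensorization is where the real work lies. By contrast, the delicate features of the one-dimensional proof — in particular the deterministic jump size $g$, which is what makes the coupling of the jump times in the coalescence step nontrivial — are inherited coordinate by coordinate and require no new idea here, and the passage from $\mu_0=\delta_{y^{(0)}}$ to a general starting distribution on $(\mathbb{R}_+^*)^d$ with all moments is equally routine once the Dirac case is settled.
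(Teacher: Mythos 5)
Your tensorization step is identical to the paper's: since $\mathcal{L}_d$ is a sum of single--coordinate generators, the $d-1$ coordinates evolve as \emph{independent} one--dimensional PDMPs with parameters $(a_i,b_i,c_i,g)$ from \eqref{aibici}, and Theorem \ref{distance var totale} applies coordinate--wise. The paper's own justification is precisely this observation, plus one sentence --- ``the probability for coupling all the coordinates before time $t$ is essentially the product of the probabilities of the coupling of each coordinate'' --- with the details left to the reader. So far you are following the paper.

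The gap, which you correctly identify but do not close, is that this coupling scheme does \emph{not} deliver the claimed sum of rates. If one coalesces the joint process by running the one--dimensional two--step coupling independently on each coordinate and freezing once a coordinate coalesces, the joint coalescence time is $\tau=\max_i\tau_i$, so the miscoupling probability is
$1-\prod_i\bigl(1-q_i(t)\bigr)\approx\sum_i q_i(t)$ for large $t$, where $q_i(t)=\mathbb{P}(\tau_i>t)$; this decays at rate $\min_i\alpha_i\pi_i$, not $\sum_i\alpha_i\pi_i$. Your proposed repair --- a Harris argument with the product Lyapunov function $V=\prod_i V_i$ --- does not improve matters: although independence gives $P_tV=\prod_i P_t^{(i)}V_i$, inserting the one--dimensional drift bounds $P_t^{(i)}V_i\le e^{-\lambda_i t}V_i+K_i$ and expanding the product produces $2^{d-1}$ cross terms of the form $e^{-\lambda_j t}V_j\prod_{k\ne j}K_k$, which decay only at rate $\lambda_j$; the dominant nonconstant contribution is therefore $e^{-\min_i\lambda_i\, t}$, so the product Lyapunov function contracts at the \emph{minimum} rate and a Harris theorem built on it cannot beat that floor. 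Indeed, by monotonicity of total variation under marginalization, $\|\mu_0 P_t-\mu_\infty\|_{TV}\ge \max_i\|\mu_0^{(i)}P_t^{(i)}-\mu_\infty^{(i)}\|_{TV}$, so the joint TV distance cannot decay faster than any per--coordinate TV distance; the coordinate--by--coordinate route simply cannot justify an exponent larger than $\min_{2\le i\le d}\alpha_i\pi_i$. In short, every ingredient you invoke (and every one the paper's one--line sketch invokes) yields the minimum, not the sum; no concrete mechanism is offered for the upgrade, and I see none that would follow from per--coordinate coupling.
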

The proof of this result is based on the remark that follows Corollary \ref{cor:multipdmp}. Owing to the ``tensorization'' property, the probability for coupling all the coordinates before time $t$ is essentially the product of the probabilities of the coupling of each coordinate. Once again, the details of this corollary are left to the reader. 

\section{Proof of the regret bound (Theorems \ref{theo:2bras1} and \ref{theo:2bras2})}\label{sec:proof_regret} 
This section is devoted to the study of the regret of the penalized two-armed bandit procedure described in Section \ref{algo:pen_LRI}. We will mainly focus 
on the proof of the explicit bound given in Theorem \ref{theo:2bras2}$(b)$ and we will give the main ideas for the proofs of Theorems \ref{theo:2bras1}  and \ref{theo:2bras1}$(a)$.  

\subsection{Notations}
In order to lighten the notations, $X_n^{1}$ will be summarized by $X_n$, so that $X_n^{2}=1-X_n$.

The proofs are then strongly based on a detailed study of the behavior of the (positive) sequence $(Y_n)_{n\ge1}$  defined by 
\begin{equation}\label{eq:defY}
\forall n \geq 1 \qquad Y_n= \frac{1-X_n}{\gamma_n}.
\end{equation}

As we said before, we will consider the following sequences $(\gamma_n)_{n \geq 1}$ and $(\rho_n)_{n \geq 1}$ below:
$$\forall n\ge1,\quad \gamma_n=\frac{\gamma_1}{\sqrt{n}}\quad\textnormal{and}\quad \rho_n=\frac{\rho_1}{\sqrt{n}} = \tilde{\rho}_1 \gamma_n \quad\textnormal{and}\quad \tilde{\rho}_1 = \frac{\rho_1}{\gamma_1},$$
where $\gamma_1$ and $\rho_1$ are constants in $(0,1)$ that will be specified later. In the meantime, we also define: $$\pi= p_1-p_2 \in (0,1).$$ 
With this setting, the pseudo-regret is
$$
\bar{R}_n = \pi  \sum_{\temps=1}^n \gamma_{\temps} \mathbb{E}[Y_{\temps}].
$$
It should be noted here that we have substituted the division by $\rho_{\temps}$ in \eqref{supnnnn} by a normalization with $\gamma_{\temps}$. This will be easier to handle in the sequel. The main issue now is to obtain a convenient upper bound for $\mathbb{E}[Y_{\temps}]$. More precisely, note that:
$$
\forall n_0 \in \mathbb{N} \quad
\forall n\le n_0-1,\quad  \bar{R}_n \le \pi n\le \pi\sqrt{n_0-1} \sqrt{n},$$
and conversely for every $n\ge n_0$,

\begin{eqnarray}\label{eq:contRNgenee}
\frac{\bar{R}_n}{\sqrt{n}} &\le&  \pi\sqrt{n_0-1}+\pi\sup_{n\ge n_0}\ES[Y_n]\frac{1}{\sqrt{n}}\sum_{n=n_0}^n\frac{\gamma_1}{\sqrt{k}} \nonumber \\
&\le &
 \pi\left(\sqrt{n_0-1}+2\gamma_1 \sup_{n\ge n_0}\ES[Y_n]\right).
\end{eqnarray}
Thus it is enough to derive an upper bound of $\ES[Y_n]$ after an iteration $n_0$ that can be on the order of $1/\pi^2$. In particular, the ``suitable" choice of $n_0$ will strongly depend on the value of $\pi$.
\subsection{Evolution of $(Y_{n})_{n\geq 1}$}
\paragraph{\tcr{Recursive dynamics of $(Y_n)_{n\ge1}$.}}
In order to understand the mechanism and difficulties of the penalized procedure, let us first roughly describe the behavior of the sequences $(X_{n})_{n \geq 1}$ and $(Y_{n})_{n \geq 1}$. \tcr{According to 
 \eqref{eq:pen_LRId3},}
\begin{eqnarray*}
\mathbb{E}\left[X_{n+1} \vert \mathcal{F}_n \right] &=& X_n + \gamma_{n+1} X_n(1-X_n) \left[p_1-p_2\right]\\
&& + \gamma_{n+1} \rho_{n+1} \left[(1-X_n)^2 (1-\sigma p_2) - X_n^2 (1-\sigma p_1) \right].
\end{eqnarray*}
It can be observed that the drift term may be split into two parts, where the main part is the usual
drift of NSa described by $h$ defined by:
\begin{equation}\label{eq:h}
\forall x\in[0,1], \quad h(x)=[p_1-p_2] x (1-x).
\end{equation}
The second term comes from the penalization procedure and depends on $\sigma$. We set
\begin{equation}\label{eq:kappa}
\kappa_{\sigma}(x)=(1-\sigma p_2) (1-x)^2 - (1 - \sigma p_1) x^2.
\end{equation}

As a consequence, we can write the evolution of $(X_n)_{n \geq 0}$ as follows:
\begin{eqnarray}\label{eq:recX}
1-X_{n+1} = 1-X_{n} - \gamma_{n+1}\left[h(X_{n}) +\rho_{n+1}\kappa_{\sigma}(X_{n})+\Delta M_{n+1}\right],
\end{eqnarray}
where $\Delta M_{n+1}$ is a martingale increment.
On the basis of the equation above, we easily derive that  
\begin{eqnarray*}
  \forall n\geq 1,\quad Y_{n+1} = Y_{n}\left(1 + \gamma_{n}(\epsilon_{n}-\pi X_{n})\right) -\rho_{n+1}\kappa(X_{n})+\Delta M_{n+1} \end{eqnarray*}
where

\begin{equation}\label{eq:epn}
\epsilon_{n} =\frac{1}{\gamma_{n+1}}- \frac{1}{\gamma_{n}} = \frac{1}{\gamma_1}\left(\sqrt{n+1}-\sqrt{n}\right)\leq \frac{1}{2\gamma_1\sqrt{n}}=\frac{\gamma_n}{2\gamma_1^2}.
\end{equation}
\noindent It follows that the increments of $(Y_{n})_{n\geq 1}$  are given by:
\begin{eqnarray*}
\Delta Y_{n+1} := Y_{n+1}-Y_{n} = \gamma_{n}\varphi_{n}(Y_{n}) - \Delta M_{n+1}
\end{eqnarray*}
where the drift function $\varphi_n$ acting on the sequence $(Y_n)_{n \geq 1}$ is defined as
\begin{eqnarray*}
\varphi_{n}(y) = \underbrace{ y\times \left[\epsilon_{n}+\pi(\gamma_{n}y-1)\right]}_{:=\varphi_n^1(y)} + \underbrace{\left(-\frac{\rho_{n+1}}{\gamma_n}\kappa_{\sigma}(1-\gamma_n y)\right)}_{:=\varphi_n^2(y)}.
\end{eqnarray*}
To better understand the underlying effects of the dynamical system, it should be recalled that the definition of the sequence $(Y_n)_{n \geq 1}$ implies that $Y_n \in [0,{\gamma_n}^{-1}]$ with ${\gamma_n}^{-1} \sim C n^{1/2}$. Since we aim to obtain a uniform bound (over $n$) of $\mathbb{E}[Y_n]$, it is thus important to understand the behavior of the drift $\varphi_n$ over $[0,{\gamma_n}^{-1}]$. \tcr{In particular,  it is of primary interest to see where the function $\varphi_n$ is negative.}

\paragraph{Crude NSa.} 


When dealing with the crude bandit algorithm (\textit{i.e.}, when $\rho_1=0$, see \eqref{algo:LRI}), the drift is reduced to $\varphi_n^1$. One can check that $\varphi_n^1(y)$ is negative \textit{iff}
$$\epsilon_{n}-\pi(1-\gamma_{n} y)<0 \Longleftrightarrow  y \leq  {\gamma_n}^{-1} -\frac{\epsilon_n}{\pi\gamma_n} \Longleftrightarrow  x \geq \frac{\epsilon_n}{\pi}$$
where $x=1-\gamma_n y$.
This means that when $x$ is close to 0 (in some sense depending on $n$, $\pi$ and $\gamma_1$), $\varphi_n^1$  becomes \tcr{positive and $Y_n$ has a tendency to increase. In others words, the dynamical system $(Y_n)_{n\ge1}$ has no \textit{mean-reverting}  when $Y_n$ is far from $0$. The fact that the crude bandit algorithm does not always converge to the good target can be understood as a consequence of this remark.}

\paragraph{Penalized and Over-Penalized NSa.}

 When the drift $\varphi_n$ contains a non zero penalty, the second term $-\varphi^2_{n}$ may help the dynamics to not be repulsive when $x$ is close to $0$, $i.e.$ when 
 $y$ is larger than $1/\gamma_n$. It can be checked that $\kappa_{\sigma}(0)=1-\sigma p_2$ and:
$$\lim_{n \rightarrow + \infty} \varphi_n\left({\gamma_n}^{-1}\right)= \frac{1}{2\gamma_1^2}-\frac{\gamma_1}{\rho_1}(1-\sigma p_2).$$
This quantity is negative under the condition:
\begin{equation}\label{eq:constraintp2}
 1- \sigma p_2> \frac{\rho_1}{2\gamma_1^3}.
\end{equation}

\tcr{But, in order to obtain a uniform bound on the regret, this constraint must be satisfied independently of $p_2$.
When $\sigma=1$, $i.e.$ in the standardly penalized case,  one remarks that for any choice of $\rho_1$ and $\gamma_1$, this is only possible if  $\rho_1/(2 \gamma_1^3) > 1-p_2$. At this stage, one can thus understand
the over-penalization as a way of controlling uniformly (in $p_2$) the negativity of $\varphi_n$ far from $y=0$ (see Figure \ref{fig:rappel2}).}\smallskip

\noindent \tcr{In view of the main results, there are still two problems. The first one is that even in the over-penalized case, Inequality \eqref{eq:constraintp2} implies some constraints on $\gamma_1$ and $\rho_1$, which do not appear in Theorem \ref{theo:2bras2}.
The second one which is more embarassing for the study of $(\ES[Y_n])_{n\ge1}$ is that, near $y=0$, $\varphi_n$ is positive since $\varphi_n(0)=1-\sigma p_1$ ((see Figure \ref{fig:rappel2})). This repulsive behavior near $y=0$ can be understood as the 
	counterpart induced by the penalization.
In order to bypass the two previous problems, the main argument will be the \textit{increase of exponent} (see next section) where we show that we can replace the study of $(\ES[Y_n])_{n\ge1}$ by the one of a sequence which both has a nicer behavior near $y=0$ and alleviates the constraint \eqref{eq:constraintp2}.}





\begin{figure}[h!] 
\begin{center}
\includegraphics[width=10cm,height=4cm]{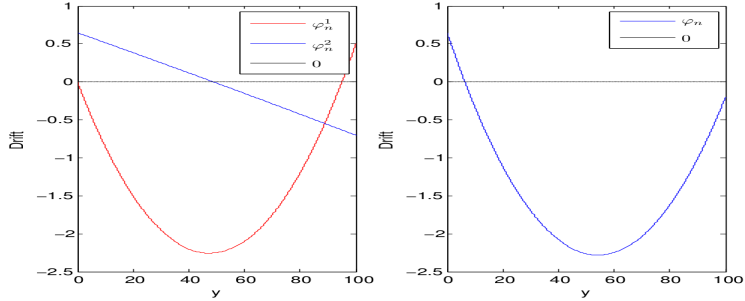}
\end{center}
\caption{Drift decomposition (left) and global (right) when $y \in [0,\frac{1}{\gamma_n}]$ with $\gamma_1 = \rho_1 = 1$, $p_{1}=0.7$ ,$p_{2}=0.6$, $\sigma=0.5$.\label{fig:rappel2}}  
\end{figure}


 \subsection{Increase of exponent}
\noindent
We introduce the sequence $(Z_{n}^{(r)})_{n\geq 0}$ defined by:

\begin{equation}\label{eq:znr}
\forall n \geq 1 \qquad Z_{n}^{(r)} = \frac{(1-X_{n})^{r}}{\gamma_{n}}.\end{equation}

\tcr{At this stage, one can first remark that $a.s.$, for every $r\ge1$,  $Z_n^{(r)}\le Z_n^{(1)}=Y_n$. One can thus guess that the difficulties tackled at the end of the previous section will be easier to overcome for $(\ES[Z_n^{(r)}])_{n\ge1}$ with $r>1$.
Of course, this remark has an interest if conversely, one is able to relate the control of $\ES[Y_n]$ to those of $\ES[Z_n^{(r)}]$, $r\ge1$.}

This is the purpose of Proposition \ref{prop:increment} where taking advantage of the structure of the algorithm, one shows that for every $r\ge1$, $\mathbb{E}[\znr]$  can be controlled by a function of $\mathbb{E}[\znrr]$.


\noindent Let us define the bounded function $h_r$ on $[0,1]$:
\begin{equation}\label{eq:hr}
\forall \gamma \in [0,1] \qquad 
h_r(\gamma)=\frac{(1+\gamma)^r-1-r\gamma}{r \gamma^2}.
\end{equation}

\begin{proposition}\label{prop:increment} 
Let $r\in\mathbb{N}^*$, $\gamma_1\in(0, 1)$ and  $0<\epsilon \le \epsilon_0 = \frac{1}{3}$, and set
\begin{equation}\label{eq:n0}
n_0(\epsilon,\pi,\gamma_1):= \left\lfloor \frac{1}{4 \epsilon^2 \gamma^2_1 \pi^2}\right\rfloor+1.
\end{equation}
Then, if $2\epsilon\gamma_1^2(r-\varepsilon)\le 1$,
 $$
\sup_{n \geq n_0} \mathbb{E} \znr \leq \mathbb{E} \znzr + 
\frac{r}{\pi(r-\epsilon)} \left[\tilde{\rho}_1 + h_r(\gamma_{n_0})+ \pi \sup_{n \geq n_0}\ES[ \znrr] \right].$$
In particular, for $r=1,2$, the previous inequality holds  for every $\gamma_1\in(0,1)$ and $\varepsilon\in(0,1/3]$.
\end{proposition}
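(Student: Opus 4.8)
\textbf{Proof plan for Proposition \ref{prop:increment}.}

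The plan is to derive a recursive inequality for $\ES[\znnr]$ in terms of $\ES[\znr]$ and $\ES[\znrr]$ (the ``increase of exponent''), and then to sum it up. First I would compute the one-step conditional expectation of $\znnr$. Starting from \eqref{eq:recX}, write $(1-X_{n+1})^r = (1-X_n)^r\left(1 - \gamma_{n+1}\frac{h(X_n)+\rho_{n+1}\kappa_\sigma(X_n)+\Delta M_{n+1}}{1-X_n}\right)^r$ (being careful with the degenerate case $X_n=1$, which is absorbing and contributes $0$), expand via the binomial/Taylor formula, and isolate the linear term. The linear term produces the ``mean-reverting'' contribution $-r\gamma_{n+1}h(X_n)(1-X_n)^{r-1} = -r\gamma_{n+1}\pi X_n(1-X_n)^r$, which after dividing by $\gamma_{n+1}$ and using $\znnr = (1-X_{n+1})^r/\gamma_{n+1}$ gives a term of the form $-\pi r X_n \znr$; combined with the reindexation factor $\gamma_n/\gamma_{n+1}$ (whose increment is controlled by $\epsilon_n \le \gamma_n/(2\gamma_1^2)$ from \eqref{eq:epn}) and the condition $n \ge n_0$ (ensuring $\epsilon_n \le \epsilon \pi \gamma_n$, which is exactly what the choice \eqref{eq:n0} of $n_0$ guarantees), one extracts a genuinely negative drift coefficient proportional to $-\pi(r-\epsilon)\gamma_n \znr$ on the region where $X_n$ is bounded away from $0$. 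The remaining terms are: the penalization term $-r\gamma_{n+1}\rho_{n+1}\kappa_\sigma(X_n)(1-X_n)^{r-1}$, which is bounded (after normalization) by $r\tilde\rho_1 \gamma_n$ times something of order $(1-X_n)^{r-1} \le 1$ since $\kappa_\sigma$ is bounded and $\kappa_\sigma(1)=-(1-\sigma p_1)<0$ helps near $X_n=1$; the quadratic-and-higher Taylor remainder, which is where the function $h_r$ from \eqref{eq:hr} enters — the $k$-th order term carries $(1-X_n)^{r-k}/\gamma_{n+1}$ against $\gamma_{n+1}^k$, and summing the deterministic (non-martingale) part gives precisely a contribution controlled by $h_r(\gamma_{n_0})$; and the martingale increment $\Delta M_{n+1}$, which vanishes in conditional expectation at first order, the second-order cross terms being again absorbed into the $h_r$ bound using the boundedness of the increments.

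The key algebraic identity to keep in mind is that the ``bad'' repulsive behaviour near $X_n \approx 1$ (i.e. $\znr$ small) is precisely what gets traded: when $(1-X_n)$ is small, $\znr$ is small, so $-\pi(r-\epsilon)\gamma_n\znr$ is a weak drift, but then the term $-r\gamma_{n+1}\rho_{n+1}\kappa_\sigma(X_n)(1-X_n)^{r-1}$ — which, crucially, involves $(1-X_n)^{r-1}$ and \emph{not} $(1-X_n)^r$ — behaves like a constant $\times \gamma_n$ (of order $\tilde\rho_1 \gamma_n$), and it is this term that, when it appears in the bound for $\ES[\znr]$, must be re-expressed as $\tilde\rho_1 \cdot (1-X_n)^{r-1}/\gamma_{n+1} \cdot \gamma_{n+1}^2 \cdots$ — no: the clean way is that $(1-X_n)^{r-1}/\gamma_{n+1} = Z_{n}^{(r-1)}\cdot(\gamma_n/\gamma_{n+1})$ up to the normalisation mismatch, but since $h$ and $\kappa_\sigma$ are multiplied by an extra $\gamma_{n+1}$, one more naturally bounds $\rho_{n+1}\kappa_\sigma(X_n)(1-X_n)^{r-1} \le \tilde\rho_1\gamma_n \cdot \mathrm{const}$, i.e. it is simply $O(\gamma_n)$ and contributes the $\tilde\rho_1$ term. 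Meanwhile the expansion also produces terms proportional to $\gamma_n \cdot \pi X_n \znrr$-type quantities coming from writing $X_n \znr = \znr - (1-X_n)\znr$ and $(1-X_n)\znr = \gamma_n \znrr$; this is the precise mechanism by which $\ES[\znrr]$ enters, explaining the $\pi\sup_n\ES[\znrr]$ term on the right-hand side.

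Once the one-step inequality $\ES[\znnr] \le \ES[\znr](1 - \pi(r-\epsilon)\gamma_n) + \gamma_n\left(r\tilde\rho_1 + r h_r(\gamma_{n_0}) + \pi r \ES[\znrr] + (\text{lower order})\right)$ is in hand for all $n\ge n_0$ (the hypothesis $2\epsilon\gamma_1^2(r-\epsilon)\le 1$ ensuring the contraction factor $1-\pi(r-\epsilon)\gamma_n$ stays in $[0,1)$, so that iterating is legitimate and no term blows up), I would conclude by a standard discrete Gronwall/telescoping argument: setting $u_n = \ES[\znr]$, the inequality $u_{n+1} \le u_n(1-c\gamma_n) + \gamma_n K$ with $c = \pi(r-\epsilon)$ and $K = r[\tilde\rho_1 + h_r(\gamma_{n_0}) + \pi\sup_n\ES[\znrr]]$ yields $\sup_{n\ge n_0} u_n \le u_{n_0} + K/c$, which after substituting $c = \pi(r-\epsilon)$ and $K$ gives exactly the claimed bound $\sup_{n\ge n_0}\ES[\znr] \le \ES[\znzr] + \frac{r}{\pi(r-\epsilon)}[\tilde\rho_1 + h_r(\gamma_{n_0}) + \pi\sup_{n\ge n_0}\ES[\znrr]]$. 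The final sentence about $r=1,2$ is just the observation that for these values the constraint $2\epsilon\gamma_1^2(r-\epsilon)\le 1$ holds automatically for all $\gamma_1\in(0,1)$ and $\epsilon\le 1/3$, since $2\epsilon\gamma_1^2(r-\epsilon) \le 2\cdot\frac13\cdot 1\cdot 2 = \frac43$ — wait, this needs $r-\epsilon \le 3/(2\gamma_1^2\cdot 2\epsilon)$; for $r=2,\epsilon=1/3,\gamma_1<1$ we get $2\cdot\frac13(2-\frac13)\gamma_1^2 = \frac{10}{9}\gamma_1^2 < \frac{10}{9}$, which is \emph{not} $\le 1$ for $\gamma_1$ close to $1$, so I expect the actual argument uses a slightly sharper bookkeeping (perhaps $\gamma_{n_0}$ rather than $\gamma_1$, or the hypothesis is meant with $\gamma_{n_0}$); I would track the constants carefully here. \textbf{The main obstacle} I anticipate is precisely this careful bookkeeping of the Taylor remainder: controlling the higher-order terms $(1-X_n)^{r-k}/\gamma_{n+1}$ uniformly so that they collapse into the single clean quantity $h_r(\gamma_{n_0})$ without losing the sign of the leading drift, and simultaneously verifying that the contraction coefficient $1-\pi(r-\epsilon)\gamma_n$ remains nonnegative under the stated hypothesis — this is the delicate point where the specific choice \eqref{eq:n0} of $n_0$ and the constraint on $\epsilon\gamma_1^2(r-\epsilon)$ are both used.
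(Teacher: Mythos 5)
Your plan follows essentially the same route as the paper's proof: binomial expansion of $(1-X_{n+1})^r$, isolation of the linear drift $-r\gamma_{n+1}\pi X_n(1-X_n)^r$, the splitting $X_n\znr = \znr-(1-X_n)\znr$ that raises the exponent, absorption of the penalization and the Taylor remainder into $r\gamma_{n+1}\bigl(\tilde{\rho}_1+h_r(\gamma_{n+1})\bigr)$, and a discrete-Gronwall iteration. Two small bookkeeping slips: $(1-X_n)\znr=\znrr$ directly, with no extra factor $\gamma_n$ (both quantities carry the same normalization $1/\gamma_n$), and for $n\ge n_0$ one has $\epsilon_n\le\epsilon\pi$, not $\epsilon\pi\gamma_n$; neither affects the viability of the plan. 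The paper writes the recursion as $\znnr=\znr\bigl(1+\gamma_n[\epsilon_n-r\pi]\bigr)+r\pi\gamma_n\znrr+\beta_n^{(r)}-r(1-X_n)^{r-1}\Delta M_{n+1}$ and closes via Lemma \ref{lemma:sum}, which imposes the auxiliary condition $n_0\ge 1/(\alpha_r\gamma_1)^2$ (this is where $\epsilon\le 1/3$ is used); your ``standard discrete Gronwall'' observation $\sup u_n\le u_{n_0}+K/c$ only needs $c\gamma_n\in(0,1)$ for the one-line induction and is, if anything, a cleaner way to conclude.

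Your concern about the final sentence of the statement is warranted: for $r=2$, $\epsilon=1/3$ one gets $2\epsilon\gamma_1^2(r-\epsilon)=\tfrac{10}{9}\gamma_1^2$, which exceeds $1$ when $\gamma_1>3/\sqrt{10}\approx 0.949$, so the parenthetical claim that the hypothesis holds for every $\gamma_1\in(0,1)$ and $\epsilon\in(0,1/3]$ when $r=2$ is not literally correct near the endpoints. The effective requirement is $\alpha_r\gamma_{n_0}<1$, and since $\alpha_r\gamma_{n_0}<2\epsilon\gamma_1^2(r-\epsilon)\pi^2$, the factor $\pi^2<1$ buys slack for each fixed gap $\pi$ but not uniformly. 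In practice the issue is harmless since the paper's eventual optimization takes $\gamma_1=0.89$, $\epsilon=1/9$, where the condition holds with ample margin, but you are right to flag it.
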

\begin{rmq}  
\tcr{Note that the above result induces some constraints on $\epsilon$ and $\gamma$. These constraints, which allow us to manage the constants of the inequality, are mainly adapted to the proof of
Theorem \ref{theo:2bras2} $(b)$. In fact, in the proofs of Theorems \ref{theo:2bras1} and \ref{theo:2bras2} $(a)$, we will need  to rewrite the above property in a slightly different way (see Section \ref{subsec:prooftheo1} for details).}  
\end{rmq}

\subparagraph{Proof} 
For any integer $r>0$ and $ n\geq 0$, the binomial formula applied to \eqref{eq:recX} leads to
\begin{eqnarray*}
(1-X_{n+1})^{r }&=& \left(1-X_{n} - \Delta X_{n+1} \right)^r \\
& = & (1-X_{n})^{r}- r (1-X_{n})^{r-1} \Delta X_{n+1}\\&& +\sum_{j=0}^{r-2}\binom{r}{j}(1-X_{n})^{j}(-\Delta X_{n+1})^{r-j}.
\end{eqnarray*}

where  $\sum_\emptyset =0$ and $\Delta X_{n+1}=X_{n+1}-X_n = \gamma_{n+1} [
h(X_n)+\rho_{n+1}\kappa_{\sigma}(X_n) + \Delta M_{n+1}]
$.
From the definition of $h$ given in \eqref{eq:h}, we get
$$
(1-x)^{r-1}  [h(x)+\rho_{n+1}\kappa_{\sigma}(x)] =  \pi x(1-x)^{r} + \rho_{n+1} \kappa_{\sigma}(x)(1-x)^{r-1}.
$$
If we define now
\begin{equation}\label{eq:betanr}
\beta_{n}^{(r)} = -r \rho_{n+1}(1-X_{n})^{r-1}\kappa_{\sigma}(X_{n}) + \frac{1}{\gamma_{n+1}}\sum_{j=0}^{r-2}\binom{r}{j}(1-X_{n})^{j}(-\Delta X_{n+1})^{r-j},
\end{equation} we can then conclude using \eqref{eq:znr} that
\begin{eqnarray}
\lefteqn{\znnr} \nonumber \\ &=&\znr \frac{\gamma_n}{\gamma_{n+1}} - \gamma_n r \pi X_n \znr
+ \beta_{n}^{(r)}- r(1-X_{n})^{r-1}\Delta M_{n+1}\nonumber \\
&=&\znr \left(1 + \gamma_n \left[ \frac{1}{\gamma_{n+1}} - \frac{1}{\gamma_n} -  r \pi X_n \right]\right) + \beta_{n}^{(r)}- r(1-X_{n})^{r-1}\Delta M_{n+1} \nonumber\\
& = & \znr \left(1 + \gamma_n \left[ \epsilon_n -  r \pi X_n \right]\right) + \beta_{n}^{(r)}- r(1-X_{n})^{r-1}\Delta M_{n+1}\nonumber\\
& = & \znr \left(1 + \gamma_n \left[ \epsilon_n -  r \pi \right]\right) 
+r \pi \gamma_n (1-X_n) \znr  + \beta_{n}^{(r)}- r(1-X_{n})^{r-1}\Delta M_{n+1}\nonumber\\
& = & \znr \left(1 + \gamma_n \left[ \epsilon_n -  r \pi \right]\right) 
+r \pi \gamma_n \znrr  + \beta_{n}^{(r)}- r(1-X_{n})^{r-1}\Delta M_{n+1}.
\label{eq:recZ}
\end{eqnarray}
The formulation above is important: it exhibits a contraction of
 $\left(1 + \gamma_n \left[ \epsilon_n -  r \pi \right]\right)$ on $\znr$ that can be used jointly with an upper bound of $\znrr$ and a simple majorization of $\beta_n^{(r)}$.
In this view, we study \eqref{eq:betanr}: $|\Delta X_{n+1}|\leq \gamma_{n+1}$ $a.s.$ and \eqref{eq:kappa} yields $|\kappa_\sigma(x)|\leq (1-\sigma p_2)$. Now, with $h_r$ given in \eqref{eq:hr}, we get

$$\beta_{n}^{(r)} \leq r \tilde{\rho}_1\gamma_{n+1}+\sum_{j=0}^{r-2}\binom{r}{j}(\gamma_{n+1})^{r-j-1}\leq r\left(\tilde{\rho}_1 +h_r(\gamma_{n+1})\right)\gamma_{n+1}.$$

For any $\epsilon \in (0,1)$, we can see in \eqref{eq:recZ} that the contraction coefficient
 can be useful as soon as $n$ is large enough. More precisely, using \eqref{eq:epn}, we see that
 $$
 \epsilon_n \leq \epsilon \Longleftarrow n \geq n_{0}(\epsilon,\pi,\gamma_1) := \left\lfloor \frac{1}{4 \epsilon^2 \gamma^2_1 \pi^2}\right\rfloor+1.
 $$
 Then, for every $n\ge n_0(\epsilon,\pi,\gamma_1)$, 
$$1 + \gamma_n \left[ \epsilon_n -  r \pi \right]\le 1-\alpha_r \gamma_n\quad\textnormal{with}\quad \alpha_r=  \pi(r-\varepsilon).$$
In the sequel, we will omit the dependence of $n_0$ in $(\epsilon,\pi,\gamma_1)$ and will just use the notation $n_0$.
Also remark that under the condition  $2\epsilon\gamma_1^2(r-\varepsilon)\le 1$, we have
$\alpha_r\gamma_j<1$ for every $\pi\in(0,1)$ and for every $j\ge n_0$ (one can in particular check that $2\epsilon\gamma_1^2(r-\varepsilon)\le 1$ is true for every $\epsilon\in(0,1/3)$ and $\gamma_1\in(0,1)$ if $r=1,2$). Thus, by a simple recursion based on \eqref{eq:recZ}, one obtains for every $n\ge n_0+1$,
\begin{eqnarray*}
\mathbb{E}(Z_{n}^{(r)}) \leq\mathbb{E}( Z_{n_0}^{(r)})\prod\limits_{j=n_0}^{n-1}(1- \alpha_r\gamma_{j}) +  \sum\limits_{j=n_0}^{n-1}\left(r\pi \gamma_{j}\mathbb{E}(Z_{j}^{(r+1)})+\beta_{j}^{(r)}\right)\prod\limits_{l=j}^{n-1}(1-\alpha_r\gamma_{l})
\end{eqnarray*}
If we call $\Theta_{r}= r\left(\pi\sup\limits_{j\geq n_0}\left(\mathbb{E}(Z_{j}^{(r+1)}) + \tilde{\rho}_1+h_r(\gamma_j)\right)\right)$, an iteration of the previous inequality yields: 
\begin{eqnarray*}
\mathbb{E}(Z_{n}^{(r)}) \leq \mathbb{E}( Z_{n_0}^{(r)})+ \Theta_{r}\sum\limits_{j=n_0}^{n-1}\gamma_{j}\prod\limits_{l=j}^{n-1}(1-\alpha_r \gamma_{l}).
\end{eqnarray*}

We aim to apply Lemma \ref{lemma:sum} (deferred to the appendix section) to the last term. It is possible as soon as
$$
n_0 \geq  \frac{1}{(\alpha_r \gamma_1)^2}.
$$

This last condition is fulfilled for any $r \geq 1$ when
$
\frac{1}{4 \epsilon^2 \gamma_1^2 \pi^2} \geq \frac{1}{ (1-\epsilon)^2 \pi^2 \gamma_1^2},
$
$i.e.$ when $\epsilon\le 1/3$.

Then, by Lemma \ref{lemma:sum}, one deduces that $\forall \epsilon \leq 1/3$ and $
\forall n \geq n_0$ :
$$
\sup_{n \geq n_0} \mathbb{E} \znr \leq \mathbb{E} \znzr + 
\frac{r}{\pi(r-\epsilon)} \left[\tilde{\rho}_1 + h_r(\gamma_{n_0})+ \pi \sup_{n \geq n_0} \znrr \right].
$$
\hfill \cqfd

On the basis of the last proposition and a recursive argument, we can now deduce the following key observations.
\begin{coro}\label{coro:boundY}
Assume that $\epsilon \in(0, 1/3)$, $\gamma_1\in(0,1)$ and that $n_0$ is defined in \eqref{eq:n0}. Then,
\begin{eqnarray}\label{eq:borneY}
\sup_{n \geq n_0} \mathbb{E} [Y_n]& \leq &\mathbb{E}[ Z^{(1)}_{n_0}] + \frac{\ES[Z_{n_0}^{(2)}]}{1-\epsilon} + \frac{1}{\pi(1-\epsilon)} \left[\tilde{\rho}_1+\frac{\tilde{\rho}_1}{1-\epsilon/2} +\frac{1}{2(1-\epsilon/2)}\right] \nonumber \\
&& + \frac{ \sup_{n \geq n_0} \mathbb{E} Z_n^{(3)} }{(1-\epsilon)(1-\epsilon/2)}.
\end{eqnarray}

\end{coro}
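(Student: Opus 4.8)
The plan is to apply Proposition \ref{prop:increment} three times, with $r=1$, $r=2$ and $r=3$, and then chain the resulting inequalities. The point is that Proposition \ref{prop:increment} bounds $\sup_{n\ge n_0}\ES[\znr]$ in terms of $\sup_{n\ge n_0}\ES[\znrr]$, so starting from $r=1$ (which gives exactly $Y_n=Z_n^{(1)}$ on the left) we cascade down to $r=3$; the cascade is finite because the extra constant $\frac{r}{\pi(r-\epsilon)}[\tilde\rho_1+h_r(\gamma_{n_0})]$ does not blow up and, crucially, $h_r(0)=1/2$ so one can also just bound $h_r(\gamma_{n_0})\le h_r(1)$ or simply use $h_1\equiv 0$, $h_2(\gamma)=1/2$ in the relevant cases. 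First I would note that the hypotheses $\epsilon\in(0,1/3)$, $\gamma_1\in(0,1)$ guarantee, by the last sentence of Proposition \ref{prop:increment}, that the proposition applies for $r=1$ and $r=2$ without any further constraint; for $r=3$ one would either impose the mild condition $2\epsilon\gamma_1^2(3-\epsilon)\le 1$ or, more cheaply, observe that the corollary only uses the $r=3$ inequality through $\sup_{n\ge n_0}\ES[Z_n^{(3)}]$ appearing as a free term on the right-hand side, so no application of the proposition at level $r=3$ is actually required.

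The concrete steps: \textbf{Step 1.} Apply Proposition \ref{prop:increment} with $r=1$. Since $h_1(\gamma)=\big((1+\gamma)-1-\gamma\big)/\gamma^2=0$, this gives
\[
\sup_{n\ge n_0}\ES[Y_n]\le \ES[Z_{n_0}^{(1)}]+\frac{1}{\pi(1-\epsilon)}\Big[\tilde\rho_1+\pi\sup_{n\ge n_0}\ES[Z_n^{(2)}]\Big].
\]
\textbf{Step 2.} Apply Proposition \ref{prop:increment} with $r=2$. Here $h_2(\gamma)=\big((1+\gamma)^2-1-2\gamma\big)/(2\gamma^2)=1/2$, so
\[
\sup_{n\ge n_0}\ES[Z_n^{(2)}]\le \ES[Z_{n_0}^{(2)}]+\frac{2}{\pi(2-\epsilon)}\Big[\tilde\rho_1+\tfrac12+\pi\sup_{n\ge n_0}\ES[Z_n^{(3)}]\Big]
=\ES[Z_{n_0}^{(2)}]+\frac{1}{\pi(1-\epsilon/2)}\Big[\tilde\rho_1+\tfrac12+\pi\sup_{n\ge n_0}\ES[Z_n^{(3)}]\Big].
\]
\textbf{Step 3.} Substitute the Step 2 bound into the $\pi\sup_{n\ge n_0}\ES[Z_n^{(2)}]$ term appearing in Step 1. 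Collecting the terms, the $\ES[Z_{n_0}^{(2)}]$ picks up a factor $\frac{\pi}{\pi(1-\epsilon)}=\frac{1}{1-\epsilon}$; the constant $\tilde\rho_1$ from Step 1 stays as $\frac{\tilde\rho_1}{\pi(1-\epsilon)}$; the bracket $[\tilde\rho_1+\tfrac12]$ from Step 2 gets the prefactor $\frac{1}{(1-\epsilon)(1-\epsilon/2)}$ coming from the product $\frac{1}{\pi(1-\epsilon)}\cdot\frac{\pi}{\pi(1-\epsilon/2)}$ — wait, more carefully: the $\pi$ in front of $\sup\ES[Z_n^{(2)}]$ cancels one $\pi$ from $\frac{1}{\pi(1-\epsilon)}$, leaving $\frac{1}{1-\epsilon}$ multiplying the whole Step 2 right-hand side, and then the $\frac{1}{\pi(1-\epsilon/2)}[\tilde\rho_1+\tfrac12+\ldots]$ inside becomes $\frac{1}{\pi(1-\epsilon)(1-\epsilon/2)}[\tilde\rho_1+\tfrac12]$ plus $\frac{1}{(1-\epsilon)(1-\epsilon/2)}\sup_{n\ge n_0}\ES[Z_n^{(3)}]$. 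Grouping the three constant contributions $\frac{\tilde\rho_1}{\pi(1-\epsilon)}$, $\frac{\tilde\rho_1}{\pi(1-\epsilon)(1-\epsilon/2)}$, $\frac{1}{2\pi(1-\epsilon)(1-\epsilon/2)}$ and factoring out $\frac{1}{\pi(1-\epsilon)}$ yields precisely the bracket $\big[\tilde\rho_1+\frac{\tilde\rho_1}{1-\epsilon/2}+\frac{1}{2(1-\epsilon/2)}\big]$ in \eqref{eq:borneY}, while the $Z^{(3)}$ term acquires the stated coefficient $\frac{1}{(1-\epsilon)(1-\epsilon/2)}$.

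I do not expect a genuine obstacle here — this is a bookkeeping argument — but the one place to be careful is the arithmetic of the prefactors when substituting Step 2 into Step 1, in particular tracking which $\pi$'s cancel and checking that $h_1\equiv 0$ and $h_2\equiv 1/2$ (so that no $h_r(\gamma_{n_0})$ survives explicitly with $r=1$ and the $r=2$ one is exactly $1/2$). One should also record, once and for all, that all three applications are legitimate under $\epsilon\in(0,1/3)$, $\gamma_1\in(0,1)$: for $r=1,2$ this is the final claim of Proposition \ref{prop:increment}, and for $r=3$ one either adds the hypothesis $2\epsilon\gamma_1^2(3-\epsilon)\le1$ or, as noted, avoids invoking the proposition at $r=3$ altogether since $\sup_{n\ge n_0}\ES[Z_n^{(3)}]$ is left as-is on the right-hand side of \eqref{eq:borneY}. \hfill\cqfd
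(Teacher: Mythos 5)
Your proof is correct and takes exactly the same route the paper intends: two applications of Proposition \ref{prop:increment} (at $r=1$ and $r=2$, for which the last sentence of that proposition guarantees applicability under $\epsilon\in(0,1/3]$ and $\gamma_1\in(0,1)$), using $h_1\equiv 0$ and $h_2\equiv\tfrac12$, then substituting the $r=2$ bound into the $r=1$ bound and regrouping; the arithmetic in your Step 3 checks out and reproduces \eqref{eq:borneY} exactly. Your observation that no $r=3$ application of the proposition is needed (since $\sup_{n\ge n_0}\ES[Z_n^{(3)}]$ is simply carried along on the right-hand side) is correct and is the reason the corollary holds under the weakest constraints.
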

\begin{rmq}\label{rmq:coro2}
As in Proposition \ref{prop:increment}, this property is mainly written in view of Theorem \ref{theo:2bras2} $(b)$ where we only need to use the increase of exponent for $r=1,2$. For Theorems \ref{theo:2bras1} and \ref{theo:2bras2} $(a)$ with $\sigma\in(0,1)$, we will need to use it for large values of $r$.

\end{rmq}

\subsection{Bound for $(\mathbb{E}(Z_{n}^{(3)}))_{n\geq n_0}$}

 As seen in Corollary \ref{coro:boundY}, our next task is to bound $\mathbb{E}(Z_{n}^{(3)})$ for $n\geq n_0$ to obtain a tractable application of Equation \eqref{eq:borneY}. Such a bound is reached through careful inspection of the increments $\Delta Z_{n+1}^{(3)}:=Z_{n+1}^{(3)}-Z_{n}^{(3)}$.
 
\begin{lemma}[Decomposition of $Z_{n}^{(3)}$]\label{lemme:decompzn3}
 For every $n\ge1$,
\begin{equation*}
\ES[\Delta Z_{n+1}^{(3)}|{\cal F}_n]=\gamma_{n+1}(1-X_{n})P_{n}(X_{n}) + \Delta R_{n},
\end{equation*}
where for every $n\in\EN$, $P_{n}$ is a polynomial function defined by 
\begin{align}
P_n(x)&= \frac{(1-x)^2}{\gamma_{n+1}}(\varepsilon_n-3\pi x)-3 \tilde{\rho}_1(1-x)\kappa_{\sigma}(x) 
+3\left(x(1-x)^2  p_1+x^2(1-x)p_2\right)\nonumber\\& +\gamma_{n+1}\left(-x(1-x)^2p_1+x^3p_2\right),\label{eq:defPN}
\end{align}
and if $\gamma_1$ and $n_0$ satisfy the assumptions of Proposition \ref{prop:increment}, then 
\begin{equation*}
\forall n\ge n_0,\quad \Delta R_{n}\leq (1-\sigma p_2) \left[ 3 \gamma_{n+1} \rho_{n+1}^2 + \gamma_{n+1}^2 \rho_{n+1}^3 \right].
\end{equation*}

\end{lemma}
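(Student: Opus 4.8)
\textbf{Proof plan for Lemma \ref{lemme:decompzn3}.}

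The plan is to expand $Z_{n+1}^{(3)} = (1-X_{n+1})^3/\gamma_{n+1}$ via the binomial formula applied to the recursion \eqref{eq:recX}, isolate the conditional-expectation terms that are genuinely of order $\gamma_{n+1}(1-X_n)$, and lump the rest into a remainder $\Delta R_n$ that is controlled using $|\Delta X_{n+1}|\le\gamma_{n+1}$ a.s.\ and $|\kappa_\sigma(x)|\le 1-\sigma p_2$. Concretely, I would start from the identity already derived in the proof of Proposition \ref{prop:increment} with $r=3$: writing $\Delta X_{n+1}=\gamma_{n+1}[h(X_n)+\rho_{n+1}\kappa_\sigma(X_n)+\Delta M_{n+1}]$,
\begin{equation*}
(1-X_{n+1})^3 = (1-X_n)^3 - 3(1-X_n)^2\Delta X_{n+1} + 3(1-X_n)(\Delta X_{n+1})^2 - (\Delta X_{n+1})^3.
\end{equation*}
Dividing by $\gamma_{n+1}$, subtracting $Z_n^{(3)}$, and recalling $\epsilon_n = 1/\gamma_{n+1}-1/\gamma_n$, the ``linear'' term $-3(1-X_n)^2\Delta X_{n+1}/\gamma_{n+1}$ together with the telescoping $\gamma_n/\gamma_{n+1}$-discrepancy produces, after taking $\ES[\cdot\mid\mathcal F_n]$ (so $\Delta M_{n+1}$ vanishes), exactly the first two lines: the mean-reverting piece $\frac{(1-X_n)^2}{\gamma_{n+1}}(\epsilon_n-3\pi X_n)$ coming from $-3(1-X_n)^2\pi X_n$ and the exponent-increase, and the penalization piece $-3\tilde\rho_1(1-X_n)\kappa_\sigma(X_n)$ coming from $-3(1-X_n)^2\rho_{n+1}\kappa_\sigma(X_n)/\gamma_{n+1}$ with $\rho_{n+1}/\gamma_{n+1}=\tilde\rho_1$.

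Next I would handle the quadratic term $3(1-X_n)(\Delta X_{n+1})^2/\gamma_{n+1}$. Expanding $(\Delta X_{n+1})^2 = \gamma_{n+1}^2[h(X_n)+\rho_{n+1}\kappa_\sigma(X_n)]^2 + 2\gamma_{n+1}^2[h(X_n)+\rho_{n+1}\kappa_\sigma(X_n)]\Delta M_{n+1} + \gamma_{n+1}^2(\Delta M_{n+1})^2$, the cross term disappears in conditional expectation, the $[\cdots]^2$ term is $O(\gamma_{n+1})$ with an extra $\rho_{n+1}$ or $\rho_{n+1}^2$ whenever the penalization appears — this is where the $3\gamma_{n+1}\rho_{n+1}^2$ contribution to $\Delta R_n$ originates, after bounding $|\kappa_\sigma|\le 1-\sigma p_2$ and using $|h(X_n)|\le$ its trivial bound so that the $h^2$ part contributes to the $P_n$ term. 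The genuinely relevant part is $3(1-X_n)\gamma_{n+1}\ES[(\Delta M_{n+1})^2\mid\mathcal F_n]$; here one computes the conditional variance of the martingale increment from \eqref{eq:pen_LRId3} explicitly — it is a polynomial in $X_n$ with coefficients involving $p_1,p_2$ — and this is precisely the source of the $3(x(1-x)^2p_1+x^2(1-x)p_2)$ term in \eqref{eq:defPN} (the factor $(1-X_n)$ already pulled out). The cubic term $-(\Delta X_{n+1})^3/\gamma_{n+1}$ is $O(\gamma_{n+1}^2)$; its martingale-free part after conditioning gives the last line $\gamma_{n+1}(-x(1-x)^2p_1 + x^3p_2)$ of $P_n$ (again after extracting the global $(1-X_n)$ factor and identifying the third conditional moment of $\Delta M_{n+1}$), while the terms carrying a $\rho_{n+1}$ factor are absorbed into $\Delta R_n$, yielding the $\gamma_{n+1}^2\rho_{n+1}^3$ piece.

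The bookkeeping obstacle — and the main place where care is needed — is the exact computation of the conditional moments $\ES[(\Delta M_{n+1})^2\mid\mathcal F_n]$ and $\ES[(\Delta M_{n+1})^3\mid\mathcal F_n]$ (equivalently, of $\ES[(\Delta X_{n+1})^k\mid\mathcal F_n]$) from the four-case definition \eqref{eq:pen_LRId3} of the over-penalized algorithm, keeping track of the Bernoulli variable $B_{n+1}^\sigma$, and then verifying that the genuinely order-$\gamma_{n+1}(1-X_n)$ contributions reassemble \emph{exactly} into the stated polynomial $P_n$ while every leftover term is nonnegative after using $|\kappa_\sigma|\le 1-\sigma p_2$, $|\Delta X_{n+1}|\le\gamma_{n+1}$, and $X_n,1-X_n\in[0,1]$, so that $\Delta R_n\le (1-\sigma p_2)[3\gamma_{n+1}\rho_{n+1}^2+\gamma_{n+1}^2\rho_{n+1}^3]$. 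The restriction to $n\ge n_0$ enters only to guarantee $\epsilon_n$ is small (so the signs in the crude estimates go the right way, as in Proposition \ref{prop:increment}); everything else is an algebraic identity valid for all $n$. I would organize the computation by collecting, for each power $\gamma_{n+1}^0,\gamma_{n+1}^1,\gamma_{n+1}^2$ and each power of $\rho_{n+1}$, the corresponding monomials in $X_n$, which makes the matching to \eqref{eq:defPN} transparent and isolates the remainder cleanly.
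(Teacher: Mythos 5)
Your plan follows essentially the same route as the paper: cube the recursion for $1-X_{n+1}$, divide by $\gamma_{n+1}$, and compute the conditional moments of the increment case by case from the four-case dynamics \eqref{eq:pen_LRId3}, sorting the result by powers of $\gamma_{n+1}$ and $\rho_{n+1}$ to peel off $P_n$ and a remainder. (The paper computes $\ES[(\Delta X_{n+1})^k\mid\mathcal F_n]$ directly rather than splitting into $\text{drift}^k$ and $\ES[(\Delta M_{n+1})^k\mid\mathcal F_n]$; your split produces auxiliary $\pm h^2$ and $\pm\rho_{n+1}h\kappa_\sigma$ pieces in the two halves that cancel on re-summing, so attributing the $3\bigl(x p_1(1-x)^2+x^2(1-x)p_2\bigr)$ term to the martingale variance alone is not quite right, but this is immaterial once you add everything back.)

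The one genuine wrinkle your outline under-appreciates is the handling of the cross terms of order $\gamma_{n+1}^2\rho_{n+1}$ and $\gamma_{n+1}^3\rho_{n+1}$ that appear in the quadratic and cubic contributions when exactly one $\rho_{n+1}$ factor is present. These carry a single power of $\rho_{n+1}$, so the crude bounds $|\kappa_\sigma|\le 1-\sigma p_2$ and $|\Delta X_{n+1}|\le\gamma_{n+1}$ cannot by themselves fit them under the stated estimate $\Delta R_n\le(1-\sigma p_2)\bigl[3\gamma_{n+1}\rho_{n+1}^2+\gamma_{n+1}^2\rho_{n+1}^3\bigr]$, which is one order higher in $\rho_{n+1}$. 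The paper collects these cross terms (as $\Delta A_n^{(1)}$, $\Delta A_n^{(2)}$) and shows their sum is \emph{nonpositive} once $\gamma_{n+1}\le 2/3$, hence can be dropped in the upper bound; this is in fact the role the restriction $n\ge n_0$ plays in this lemma, rather than the smallness of $\epsilon_n$ as your outline suggests (that feature is used in Propositions \ref{prop:increment} and \ref{prop:etudePn}, but the decomposition identity here is exact for all $n$, as you note). Add that step and your term-by-term bookkeeping would produce the lemma.
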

\begin{rmq}
\begin{itemize}
\item 
 The keypoint is that $\gamma_k=\gamma_1 k^{-1/2}$ and, therefore, the series $\sum_{n\ge1} \Delta R_k$ is uniformly bounded, regardless of the value of $\pi$. This will be enough to obtain a competitive upper bound of the regret.
With the choice of $n_0$ given in \eqref{eq:n0}, careful inspection of Lemma \ref{lemme:decompzn3} leads to:
\begin{equation}\label{eq:boundRN}
\sum_{k \geq n_0} \Delta R_k \leq  12 \gamma_1^{4} \tilde{\rho}_1^2 \epsilon \pi 
+ \frac{16}{3} \gamma_1^{8} \tilde{\rho}_1^3 \epsilon^3 \pi^3. \end{equation}
 \item 
As in Remark \ref{rmq:coro2}, it should be noted that  for Theorems \ref{theo:2bras1} and \ref{theo:2bras2} $(a)$ with $\sigma\in(0,1)$, we will need to use such a development with some larger values of $r$ (see the end of this section for details). 
\end{itemize}
\end{rmq}
\begin{proof} We again use Equation (\ref{eq:recZ}) and deduce that:
\begin{align}
Z_{n+1}^{(3)}-Z_{n}^{(3)}= (1-X_n)^3(\epsilon_{n}-3\pi X_{n})) -3 \tilde{\rho}_1\gamma_{n+1}(1-X_{n})^{2}\kappa_{\sigma}(X_{n})\label{eq:recz31}\\
 + \frac{1}{\gamma_{n+1}}\sum_{j=0}^{1}\binom{3}{j}(1-X_{n})^{j}(-\Delta X_{n+1})^{3-j}
- 3(1-X_{n})^{2}\Delta M_{n+1}\label{eq:recz32}
\end{align}
First, note that terms in Equation \eqref{eq:recz31} are associated with the first two terms in the definition of $P_n$ introduced in \eqref{eq:defPN}
up to a multiplication by $(1-X_n) \gamma_{n+1}$.

 Second, we can easily compute the expectations involved in the sum of Equation \eqref{eq:recz32} since the events are all disjointed. On the one hand, when $j=1$ we have
\begin{align*}
&\frac{1}{\gamma_{n+1}}\ES[(-\Delta X_{n+1})^{2}|{\cal F}_n]\,=\, \gamma_{n+1}\sigma \left(  p_1 X_n(1-X_n)^2+ p_2 (1-X_n)X_n^2\right)\\
&+\gamma_{n+1}(1-\sigma)\left(  p_1 X_n(1-X_n-\rho_{n+1} X_n)^2+  p_2 (1-X_n)(X_n-\rho_{n+1} (1-X_n))^2\right)\\
&+\gamma_{n+1} \rho_{n+1}^2 \left[ X_n^3(1- p_1) + (1-X_n)^3 (1- p_2)\right].
\end{align*}
Further computations yield: 
 \begin{eqnarray*}
\frac{1}{\gamma_{n+1}}\ES[(-\Delta X_{n+1})^{2}|{\cal F}_n]&=&  \gamma_{n+1}X_n\left( \sigma p_1 (1-X_n)^2+ \sigma p_2 X_n^2\right)+\Delta A_n^{(1)}\nonumber\\
\hspace{-2cm}+\lefteqn{
\underbrace{\gamma_{n+1} \rho_{n+1}^2 \left[ X_n^3(1-\sigma p_1) + (1-X_n)^3 (1-\sigma p_2)\right]}_{:= \Delta R_{n}^{(1)}}}
 \end{eqnarray*}
with $\Delta A_n^{(1)}=-2\rho_{n+1}\gamma_{n+1}X_n(1-X_n)(1-\sigma)(X_n p_1+(1-X_n) p_2)$. On the other hand, we can also compute the term when $j=0$:
 \begin{eqnarray*}
  \frac{1}{\gamma_{n+1}}\ES[(-\Delta X_{n+1})^{3}|{\cal F}_n]&= &
  \gamma_{n+1}^2 X_n (1-X_{n})\left(p_2 X_n^2-p_1 (1-X_n)^2  \right)\nonumber\\
  +\Delta A_n^{(2)}+ \lefteqn{\underbrace{\gamma_{n+1}^2 \rho_{n+1}^3 \left[ X_n^4(1-\sigma p_1) - (1-X_n)^4 (1-\sigma p_2)\right]}_{:= \Delta R_{n}^{(2)}}}
  \end{eqnarray*}
with $  \Delta A_n^{(2)}\le 3\gamma_{n+1}^2\rho_{n+1}(1-\sigma)X_n(1-X_n)^2\left(\pi X_n+\rho_{n+1}(1-X_n) p_2\right)$. Set
$\Delta R_n^{(3)}=(1-X_n)\Delta A_n^{(1)}+\Delta A_n^{(2)}$ and $\Delta R_n := 3 (1-X_n) \Delta R_n^{(1)} +   \Delta R_n^{(2)}$. Plugging the previous controls into 
\eqref{eq:recz32} yields
\begin{equation}\label{eq:truc}
\ES[\Delta Z_{n+1}^{(3)}|{\cal F}_n]\le\gamma_{n+1}(1-X_{n})P_{n}(X_{n})+\Delta R_{n}.
\end{equation}

Note that $\Delta R_n^{(1)}$ can be upper bounded as follows:
$$
3 (1-X_n) \Delta R_n^{(1)} \leq 3 \gamma_{n+1} \rho_{n+1}^2 (1-\sigma p_2) \max_{0 \leq t \leq 1} \left[ \frac{1-\sigma p_1}{1-\sigma p_2} t^3(1-t) + (1-t)^4 \right].
$$
Since $1-\sigma p_1 \leq 1-\sigma p_2$, a study of the function  shows that $a t^3(1-t) + (1-t)^4$ when $a \in (0,1)$ reaches its maximal value for $t=0$. This 
 leads to: $$3 (1-X_n) \Delta R_n^{(1)} \leq 3 \gamma_{n+1} \rho_{n+1}^2 (1-\sigma p_2).$$
For $\Delta R_n^{(2)}$, we have $\Delta R_n^{(2)} \leq \gamma_{n+1}^2 \rho_{n+1}^3 (1-\sigma p_2) \max_{0\leq t \leq 1} \left[\frac{1-\sigma p_1}{1-\sigma p_2} t^4 - (1-t)^4 \right],$
which involves an increasing function of $t$. Thus, we have
$$\Delta R_n^{(2)} \leq \gamma_{n+1}^2 \rho_{n+1}^3 (1-\sigma p_1) \leq \gamma_{n+1}^2 \rho_{n+1}^3 (1-\sigma p_2).$$
Finally, if $\gamma_1$ and  $n_0$ satisfy the assumptions of Proposition \ref{prop:increment}, then for every $n\ge n_0$,
$\gamma_n\le 2/3$ and it follows that $\Delta R_n^{(3)}\le 0.$
The result follows according to Equation \eqref{eq:truc}.
\end{proof} 

 In order to bound $\sup_{n \geq n_0} \mathbb{E}( Z_{n}^{(3)})$, we now have  to precisely study the polynomial function $P_{n}$ and exhibit a mean reverting effect on its dynamics. 
\begin{proposition}\label{prop:etudePn}
Let $\epsilon \in(0, \frac{1}{3})$, $\tilde{\rho}_1 \leq   \frac{227}{232}$ and 
$\frac{1}{3\sqrt{2}(1-\sigma)\tilde{\rho}_1} \leq \gamma_1^2 \leq \frac{3}{2(1+\tilde{\rho}_1) }$. Then
\begin{itemize}
\item[$i)$] The polynomial $P_n$ given by \eqref{eq:defPN} is negative on $[0, 1 - \frac{2(1+\tilde{\rho}_1)}{\pi}{\gamma_{n+1}}]$.
\item[$ii)$] $Z_n^{(3)}$ satisfies 
\begin{eqnarray*}
\sup_{n \geq n_0} \mathbb{E} Z_n^{(3)} &\leq &\mathbb{E} Z_{n_0}^{(3)}+ 12 \gamma_1^{4} \tilde{\rho}_1^2 \epsilon \pi 
+ \frac{16}{3} \gamma_1^{8} \tilde{\rho}_1^3 \epsilon^3 \pi^3\\
&  &+ \frac{8 \gamma_1^4 \epsilon(1+\tilde{\rho}_1) \left[ 1+(1+\tilde{\rho}_1) [2+ 6  \tilde{\rho}_1  + 12 \gamma_1^2 \epsilon  ]
\right]}{\pi}.
\end{eqnarray*}

\end{itemize}
\end{proposition}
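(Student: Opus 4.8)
\textbf{Proof plan for Proposition \ref{prop:etudePn}.}

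The strategy is to first establish $(i)$ by direct analysis of the polynomial $P_n$ given in \eqref{eq:defPN}, and then to deduce $(ii)$ by a Gronwall-type recursion on $\mathbb{E} Z_n^{(3)}$ using the decomposition of Lemma \ref{lemme:decompzn3}, exactly in the spirit of the proof of Proposition \ref{prop:increment}.

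For $(i)$, I would write $P_n$ as a sum of its four groups of terms and show that, on the interval $I_n := [0, 1-\frac{2(1+\tilde\rho_1)}{\pi}\gamma_{n+1}]$, the contributions that can be positive are dominated by the strongly negative term $\frac{(1-x)^2}{\gamma_{n+1}}(\varepsilon_n - 3\pi x)$. Setting $x = 1-\gamma_{n+1} y$ (so $y$ ranges over $[0, \gamma_{n+1}^{-1}]$ and $x \in I_n$ corresponds to $y \geq 2(1+\tilde\rho_1)/\pi$), the leading term becomes $\gamma_{n+1} y^2(\varepsilon_n - 3\pi(1-\gamma_{n+1}y))$. Since $\varepsilon_n \le \epsilon$ on $n \ge n_0$ and $\varepsilon_n \le 1/(2\gamma_1\sqrt n) \to 0$, for $x$ bounded away from $0$ this term is of order $-\pi x (1-x)^2/\gamma_{n+1}$, hence very negative. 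I would bound $|\kappa_\sigma(x)| \le 1-\sigma p_2 \le 1$ and the two polynomial-in-$x$ groups by their sup norms on $[0,1]$ (each $\le$ some small constant times $p_1$ or $p_2$, hence $\le 1$), so that $P_n(x) \le \frac{(1-x)^2}{\gamma_{n+1}}(\varepsilon_n - 3\pi x) + 3\tilde\rho_1(1-x) + C\gamma_{n+1}$ for an explicit $C$; the constraints $\tilde\rho_1 \le 227/232$ and $1/(3\sqrt2(1-\sigma)\tilde\rho_1) \le \gamma_1^2 \le 3/(2(1+\tilde\rho_1))$ are precisely what is needed to make this upper bound nonpositive on all of $I_n$ (the upper bound on $\gamma_1^2$ controls the $\gamma_{n+1}$-term relative to the mean-reverting term, the lower bound handles the $j=1$ contribution involving $(1-\sigma)$, and the bound on $\tilde\rho_1$ closes the numerics). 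This is the most delicate part: getting the threshold $1-\frac{2(1+\tilde\rho_1)}{\pi}\gamma_{n+1}$ exactly right requires carefully tracking where $\varepsilon_n - 3\pi x$ changes sign together with the $\tilde\rho_1$-correction, and the interplay of the three constraints on $(\tilde\rho_1,\gamma_1)$.

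For $(ii)$, I would proceed as follows. On the ``bad'' region $x > 1 - \frac{2(1+\tilde\rho_1)}{\pi}\gamma_{n+1}$, i.e. $Z_n^{(3)} = (1-X_n)^3/\gamma_n \le$ (a bound of order $\gamma_n^3 (2(1+\tilde\rho_1)/\pi)^3 / \gamma_n = \gamma_n^2 (2(1+\tilde\rho_1))^3/\pi^3$, wait — more simply $(1-X_n) \le \frac{2(1+\tilde\rho_1)}{\pi}\gamma_{n+1}$ there), I would bound the drift $\gamma_{n+1}(1-X_n)P_n(X_n)$ crudely: $P_n$ is a bounded polynomial so $(1-X_n)|P_n(X_n)| \le$ const $\times \gamma_{n+1} \times (\text{polynomial bound})$, yielding a contribution of order $\gamma_{n+1}^2 \times$const that is summable against nothing — rather, one gets the standard alternative: when $x$ is small, $\mathbb{E}[\Delta Z_{n+1}^{(3)}|\mathcal F_n] \le 0$ by $(i)$, and when $x$ is in the bad region we control the positive excursion by the fact that $Z_n^{(3)}$ itself is then $\le C_1 \gamma_n^2/\pi^3$ with $C_1 = 8(1+\tilde\rho_1)^3$, so a one-step increase cannot exceed a quantity of that order. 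Combining: summing \eqref{eq:truc} telescopically from $n_0$ and using $\sum_{k\ge n_0}\Delta R_k \le 12\gamma_1^4\tilde\rho_1^2\epsilon\pi + \frac{16}{3}\gamma_1^8\tilde\rho_1^3\epsilon^3\pi^3$ from \eqref{eq:boundRN}, plus the bound on the bad-region positive drift which contributes the last displayed term $\frac{8\gamma_1^4\epsilon(1+\tilde\rho_1)[1 + (1+\tilde\rho_1)(2+6\tilde\rho_1+12\gamma_1^2\epsilon)]}{\pi}$ (here the three summands inside the bracket should match the three polynomial groups of $P_n$: the $\kappa_\sigma$ term giving $6\tilde\rho_1(1+\tilde\rho_1)$, the degree-3 term giving $2(1+\tilde\rho_1)$, and the $\gamma_{n+1}$-term giving $12\gamma_1^2\epsilon(1+\tilde\rho_1)$, with $\gamma_{n_0} \le 2\gamma_1^2\epsilon\pi \cdot$something via $n_0 \asymp 1/(4\epsilon^2\gamma_1^2\pi^2)$ so $\gamma_{n_0} \le 2\epsilon\gamma_1^2\pi$), we obtain $\sup_{n\ge n_0}\mathbb{E}Z_n^{(3)} \le \mathbb{E}Z_{n_0}^{(3)} + (\text{those three explicit terms})$. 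The key mechanism throughout is that $\gamma_k = \gamma_1 k^{-1/2}$ makes $\sum_k \gamma_k \gamma_{n_0}^{?}$-type tails finite with a bound that, crucially via $n_0 \asymp (\epsilon\gamma_1\pi)^{-2}$, converts every stray $\gamma_{n_0}$ into a power of $\epsilon\gamma_1\pi$ — this is what kills the $\pi$-dependence and yields a uniform regret bound after plugging into Corollary \ref{coro:boundY} and \eqref{eq:contRNgenee}. I expect the bookkeeping of constants in $(ii)$ — matching each term of the final inequality to its source in $P_n$ and in the bad-region estimate — to be the main obstacle, more tedious than conceptually hard.
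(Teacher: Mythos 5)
Your overall strategy matches the paper's (show $\mathbb{E}[\Delta Z_{n+1}^{(3)}\vert\mathcal F_n]$ is driven by $\gamma_{n+1}(1-X_n)P_n(X_n)+\Delta R_n$, use negativity of $P_n$ on the bulk, bound the positive contribution from the boundary layer $[1-\xi_n,1]$, and close with the summation $\gamma_{n_0}\le 2\epsilon\gamma_1^2\pi$), and your identification of how the three explicit terms split up in part $(ii)$ is close in spirit to what the paper does. But your proposed proof of $(i)$ has a genuine gap.

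The difficulty is near $x=0$, not near $x=1$. You propose to absorb the penalty term by writing $|\kappa_\sigma(x)|\le 1-\sigma p_2$ and deducing an upper bound of the form $P_n(x)\le \frac{(1-x)^2}{\gamma_{n+1}}(\varepsilon_n-3\pi x)+3\tilde\rho_1(1-x)+C\gamma_{n+1}$. This estimate is vacuous at the left endpoint: at $x=0$ it reads $\frac{\varepsilon_n}{\gamma_{n+1}}+3\tilde\rho_1+C\gamma_{n+1}$, which is strictly positive (indeed $\varepsilon_n/\gamma_{n+1}\to \frac{1}{2\gamma_1^2}$, not $0$). Negativity of $P_n(0)$ \emph{relies on the sign} of $\kappa_\sigma$ near $0$: since $\kappa_\sigma(0)=1-\sigma p_2>0$, the term $-3\tilde\rho_1(1-x)\kappa_\sigma(x)$ is itself negative there and must be \emph{kept}, not bounded in absolute value. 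This is exactly what the lower constraint $\gamma_1^2\ge \frac{1}{3\sqrt2(1-\sigma)\tilde\rho_1}$ encodes: it makes $-3\tilde\rho_1(1-\sigma p_2)$ dominate $\varepsilon_n/\gamma_{n+1}$ at $x=0$. The paper does not use a sup-norm domination on $[0,1-\xi_n]$ at all. Instead it argues globally from the polynomial structure: $P_n$ is cubic; its leading coefficient is negative under \eqref{eq:cond2} (the upper constraint on $\gamma_1^2$); $P_n(0)<0$ under \eqref{eq:cond1} (the lower constraint, keeping the sign of $\kappa_\sigma$); and $P_n(1)=\gamma_{n+1}p_2>0$. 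Hence $P_n$ has exactly one root in $(0,1)$ and is negative on $[0,\text{that root}]$; one then only needs to verify the single pointwise inequality $P_n(1-\xi_n)<0$ with $\xi_n=\frac{2(1+\tilde\rho_1)}{\pi}\gamma_{n+1}$, which is where the numerical constraint $\tilde\rho_1\le 227/232$ enters. Your plan skips the sign/cubic-root argument and as written cannot give $P_n<0$ on a neighbourhood of $x=0$, so $(i)$ fails. Once $(i)$ is established the way the paper does, your part $(ii)$ sketch is essentially correct (modulo the ``wait'' backtracking and the imprecise phrase ``$P_n$ is a bounded polynomial'' --- $P_n$ is not bounded uniformly in $n$, but the extra factor $(1-t)\le\xi_n$ and the restriction to $[1-\xi_n,1]$, where also $\varepsilon_n\le 3\pi t$, save the estimate, as the paper shows).
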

\begin{rmq} The above result is given under some technical conditions that will lead to a sharp explicit bound. Nevertheless, the reader has to keep in mind that in view of the condition on $\sigma$, the ``universal'' bound on $(\mathbb{E}( Z_{n}^{(3)}))_{n\ge n_0}$ is only accessible when $\sigma<1$, $i.e.$ in the over-penalized case. When $\sigma=1$, some bounds will be attainable only if $p_2$ is not too large (see \eqref{eq:constraintp2} for a similar statement when $r=1$), and in order to
alleviate the constraint on $p_2$, it will ne necessary to take a larger exponent than $r=3$ (see Subsection \ref{subsec:prooftheo1} for details).
\end{rmq}

\begin{proof} We first provide the proof of $i)$.
The function $P_n$ introduced in \eqref{eq:defPN} is a third degree polynomial and for $n \geq n_0$:
\begin{eqnarray*}
P_n(0)&=&\frac{\epsilon_n}{\gamma_{n+1}} - 3 \tilde{\rho}_1 \kappa_{\sigma}(0)\\
 &\leq& \frac{\gamma_n}{2 \gamma_1^2 \gamma_{n+1}} - 3 \tilde{\rho}_1 (1 - \sigma p_2)\\
 & \leq & \frac{\sqrt{1+n_0^{-1}}}{2 \gamma_1^2} - 3 \tilde{\rho}_1 (1-\sigma p_2) \\
\end{eqnarray*}
Since $p_2<1$, this last quantity is negative if:
\begin{equation}\label{eq:cond1} 
\rho_1 \gamma_1 \geq \frac{1}{3 \sqrt{2} (1-\sigma)}.
\end{equation}
In a same way, we can check that $P_n(1)=\gamma_{n+1} p_2 >0$ and, therefore, $P_n$ has one root in the interval $(0,1)$.
Careful inspection of the leading coefficient (designated $a_n x^3$) of $P_n$ in \eqref{eq:defPN} shows that:
$$
a_n=\left[3(1+\sigma \tilde{\rho}_1) - \frac{3}{\gamma_{n+1}} - \gamma_{n+1}\right]\pi.
$$
The leading coefficient $a_n$ is negative as soon as 
$3(1+\sigma \tilde{\rho}_1) \leq \frac{3}{\gamma_{n+1}}$. Again, the choice of $n_0$ in \eqref{eq:n0} shows that this last condition is fulfilled as soon as
\begin{equation}\label{eq:cond2}
\frac{1}{\epsilon} \geq 2 \gamma_1 \pi (\gamma_1+\sigma \rho_1).
\end{equation}
It should however be noted that we have assumed $\epsilon \in (0,1/3]$ so that $\frac{1}{\epsilon} \geq 3$. As a consequence, \eqref{eq:cond1} and \eqref{eq:cond2} are satisfied as soon as $(\gamma_1,\tilde{\rho_1})$ satisfies
$$
\frac{1}{3\sqrt{2}(1-\sigma)\tilde{\rho}_1} \leq \gamma_1^2 \leq \frac{3}{2(1+\tilde{\rho}_1)}
$$

Hence, if \eqref{eq:cond1} and \eqref{eq:cond2} hold, $P_n$ possesses one root in $(-\infty,0)$ and another one in $(1,+\infty)$. Consequently, $P_n$ has a unique root in $(0,1)$. We now consider:
\begin{equation*}
\xi_n=  \frac{2 (1+\tilde{\rho}_1) }{\pi} \gamma_{n+1} := \xi \gamma_{n+1}. \end{equation*} 
We compute that:
\begin{eqnarray*}
\lefteqn{
P_n(1-\xi_n)}\\
& =& \frac{\xi_n^2}{\gamma_{n+1}} (\epsilon_n - 3 \pi(1-\xi_n))-3\tilde{\rho}_1 \xi_n \left[(1-\sigma p_2) \xi_n^2 -(1-\sigma p_1) (1-\xi_n)^2\right]\\
&& + 3 \left[ (1-\xi_n) \xi_n^2 p_1 +(1-\xi_n)^2 \xi_n p_2 \right] + \gamma_{n+1} \left[ (1-\xi_n)^3 p_2 - \xi_n^2 (1-\xi_n) \right].
\end{eqnarray*}
Hence, replacing $\xi_n$ by $\xi \gamma_{n+1}$ and simplifying by $\gamma_{n+1}$, we see that $P_n(1-\xi_n)$ is negative when
\begin{align*}
\overbrace{ \frac{\xi^2 \epsilon_n}{(1-\xi_n)} + 3 \tilde{\rho}_1  (1-\sigma p_1)(1-\xi_n)\xi + 3 p_1 \gamma_{n+1} \xi^2 + 3 p_2 (1-\xi_n)  \xi +  p_2 (1-\xi_n)^2}^{:=A_n(\xi)}&\\
\leq   \underbrace{3 \pi \xi^2+\frac{3 \tilde{\rho}_1 \gamma_{n+1}^2 \xi^3 (1-\sigma p_2)}{1-\xi_n} + \gamma_{n+1}^2\xi^2}_{:=B_n(\xi)}.
\end{align*}
From \eqref{eq:epn}, we know that $\epsilon_n \leq \frac{\gamma_{n+1}}{2\gamma_1^2}$, and $1 -\xi_n \leq 1$ thus 

$$
A_n(\xi) \leq \xi^{2} \gamma_{n+1} \left( \frac{1}{2 \gamma_1^2 (1-\xi_n)}+3p_1	 \right) + 3 \xi \left( \tilde{\rho}_1+1 \right) + 1
$$
In the meantime, we will use the simple lower bound $B_n(\xi) \geq 3 \pi  \xi^2$. We can check that $1-\xi_n = 1 -  \frac{2(1+\tilde{\rho}_1) \gamma_{n+1}}{\pi} \geq 1 - 4 \epsilon (1+\tilde{\rho}_1) \gamma_1^2$ since $\gamma_{n_0} \leq  2 \epsilon \gamma_1^2 \pi$. Thus
\begin{eqnarray*}
\lefteqn{
A_n\left(\frac{2(1+\tilde{\rho}_1)}{\pi}\right)}\\
 &\leq& \frac{4 (1+\tilde{\rho}_1)^2}{\pi^2} \gamma_{n+1} \left[3 p_1+\frac{1}{2 \gamma_1^2 \left[1- 4 \epsilon (1+\tilde{\rho}_1) \gamma_1^2\right]}\right] + \frac{6 (1+\tilde{\rho}_1)^2}{\pi}+1 \\
& \leq & \frac{(1+\tilde{\rho}_1)^2}{\pi} \left[ 24 \epsilon \gamma_1^2 p_1+ \frac{4\epsilon}{1- 4 \epsilon (1+\tilde{\rho}_1) \gamma_1^2}+ 7
\right]
\end{eqnarray*}
and
$$
B_n\left(\frac{2(1+\tilde{\rho}_1)}{\pi}\right) \geq \frac{12 (1+\tilde{\rho}_1)^2}{\pi}.
$$
As a consequence, $P_n(1-\xi_n)$ is negative if we have
$$
5 \geq  24 \epsilon \gamma_1^2 + \frac{4\epsilon}{1- 4 \epsilon (1+\tilde{\rho}_1) \gamma_1^2}
$$
From the constraint on $\gamma_1$, another computation shows that the above condition is fulfilled when
$
\epsilon^2  \frac{128 (1+\tilde{\rho}_1)}{3} - \epsilon [84 + 40 (1+\tilde{\rho}_1)] + 45 \geq 0.
$
We then observe that all values of $\epsilon$ in $(0, \frac{1}{3}]$ can be conveniently used when
$\tilde{\rho}_1 \leq \frac{227}{232}$.\\

To obtain $ii)$, the main idea is to use the sharp estimation of the sign of $P_n$ on $[0,1]$ and to obtain an upper bound of $\mathbb{E} Z_{n}^{(3)}$. Note that:
\begin{eqnarray*}
\lefteqn{\sup_{0 \leq t \leq 1} \gamma_{n+1} (1-t) P_n(t) }&&\\&=&  \gamma_{n+1} \sup_{1-\xi_n \leq t \leq 1} (1-t) P_n(t) \\
& = &  \gamma_{n+1} \sup_{1-\xi_n \leq t \leq 1}\LARGE\left\{(1-t)^3 \left[\epsilon_n - 3 \pi t\right] - 3 \tilde{\rho}_1 (1-t)^{2} \kappa_\sigma(t) \right.\\&&\left.+ 3 \left[ t (1-t)^3 p_1 + t^2 (1-t)^2 p_2\right] + \gamma_{n+1} \left[ - t (1-t)^3 p_1+t^3(1-t)p_2\right]\LARGE\right\}
\end{eqnarray*}
We have seen in the proof of $i)$ that $t \in [1-\xi_n,1] \Longrightarrow  \epsilon_n \leq 3 \pi t$.
Hence, using $\kappa_{\sigma}(t) \geq -(1-\sigma p_1) t^2$, we have 
\begin{eqnarray*}
\lefteqn{\sup_{0 \leq t \leq 1} \gamma_{n+1} (1-t) P_n(t) }&&\\&\leq & \gamma_{n+1}\left[  3 \tilde{\rho}_1 (1-\sigma p_1) \xi_n^2 + 3 p_1 \xi_n^3 + p_2 \xi_n^2+\gamma_{n+1} \xi_n\right] \\
& \leq &  \frac{C_1(\tilde{\rho}_1,p_1,p_2,\sigma)}{\pi^2} \gamma_{n+1}^3 + \frac{C_2(\tilde{\rho}_1,p_1)}{\pi^3} \gamma_{n+1}^4
\end{eqnarray*}
with $C_1(\tilde{\rho}_1,p_1,p_2,\sigma) = (1+\tilde{\rho}_1) \left( 12 \tilde{\rho}_1 (1+\tilde{\rho}_1)(1-\sigma p_1)+ 4p_2(1+\tilde{\rho}_1) +2 \pi\right) $ and $C_2(\tilde{\rho}_1,p_1) = 24 p_1  (1+\tilde{\rho}_1)^3$ shortenned in $C_1$ and $C_2$ below.
We apply Lemma \ref{lemme:decompzn3} to upper bound $ \sup_{n \geq n_0} \mathbb{E} Z_{n}^{(3)}$:
\begin{eqnarray*}
\lefteqn{\sup_{n \geq n_0} \mathbb{E} Z_n^{(3)}} &&\\
& \leq& \mathbb{E} Z_{n_0}^{(3)} + \displaystyle \sup_{n \geq n_0} \mathbb{E}  \sum_{k=n_0}^{n} \Delta Z_{n+1}^{(3)} \\
& \leq &  \mathbb{E} Z_{n_0}^{(3)} + \displaystyle \sup_{n \geq n_0} \mathbb{E}\left[  \sum_{k=n_0}^{n} \gamma_{k+1} (1-X_k) P_k(X_k)+\Delta R_k\right] \\
& \leq &  \mathbb{E} Z_{n_0}^{(3)}+\frac{C_1}{\pi^2} \sum_{k=n_0}^{\infty} \gamma_{k+1}^3+\frac{C_2}{\pi^3} \sum_{k=n_0}^{\infty} \gamma_{k+1}^4 + \sum_{k=n_0}^{\infty} \ES\Delta R_k
\end{eqnarray*}

Using a simple comparison argument with the integrals $\int_{n_0}^{\infty} t^{-\alpha}dt$, we obtain: 
$$\sum_{k=n_0}^{\infty} \gamma_{k+1}^3 \leq 2 \gamma_1^3 n_0^{-1/2} \leq 4 \gamma_1^4 \epsilon \pi \qquad \text{and}\qquad 
\sum_{k=n_0}^{\infty} \gamma_{k+1}^4 \leq  \gamma_1^4 n_0^{-1} \leq 4 \gamma_1^6 \epsilon^2 \pi^2.$$
We then deduce that:
$$
\sup_{n \geq n_0} \mathbb{E} Z_n^{(3)} \leq \mathbb{E} Z_{n_0}^{(3)}+ \frac{4 \gamma_1^4 \epsilon C_1}{\pi}+ \frac{4 \gamma_1^6 \epsilon^2 C_2}{\pi}+ \sum_{k=n_0}^{\infty} \Delta R_k.
$$
The result now follows using \eqref{eq:boundRN}.
\end{proof}

\paragraph{Explicit bound.}

We can now conclude  the proof of Theorem \ref{theo:2bras2}.
\begin{proof}[Proof  of Theorem \ref{theo:2bras2} $(b)$]
 We consider the extreme  over-penalized case obtained with $\sigma =0$.
and use a power increment until $r=3$. Recall that $n_0:=n_0(\epsilon,\pi,\gamma_1)$ is defined by \eqref{eq:n0}. In particular,
$\sqrt{n_0-1}\le (2\epsilon\gamma_1\pi)^{-1}$ and for $i=1,2,3$, $\pi \ES[Z_{n_0}^{(i)}]\le (2\epsilon\gamma_1^2)^{-1}+(\gamma_1)^{-1}$.  Taking the results of
Proposition \ref{prop:etudePn} $ii)$ and Corollary \ref{coro:boundY} and plugging them into \eqref{eq:contRNgenee}, a series of computations yields:
\begin{eqnarray}\label{eq:optimC}
\frac{\sup_{p_1 \geq p_2} \bar{R}_n}{\sqrt{n}}\le {c(\gamma_1,\tilde{\rho}_1,\varepsilon)}:= T_1(\gamma_1,
\tilde{\rho}_1,\epsilon) +\frac{2\gamma_1}{(1-\epsilon)(1-\epsilon/2)}T_2(\gamma_1,\tilde{\rho}_1,\epsilon),
\end{eqnarray}
where 
\begin{align*}
T_1(\gamma_1,\tilde{\rho}_1,\epsilon) &=\frac{1}{2\epsilon\gamma_1}+\left(\frac{1}{\epsilon\gamma_1}+2\right)\left(1+\frac{1}{1-\epsilon}+\frac{1}{(1-\epsilon)(1-\epsilon/2)}\right)\\
&+2\rho_1\left(\frac{1}{1-\epsilon}+\frac{1}{(1-\epsilon)(1-\epsilon/2)}\right)+\frac{\gamma_1}{(1-\epsilon)(1-\epsilon/2)},
\end{align*}
and 
\begin{eqnarray*}
\lefteqn{
T_2(\gamma_1,\tilde{\rho}_1,\epsilon) }\\
&=& \gamma_1^4 \left[ 8 \epsilon (1
+\tilde{\rho}_1)\left(1+(1+\tilde{\rho}_1)(2+6 \tilde{\rho}_1+12
\gamma_1^2 \epsilon)\right) + 12  \tilde{\rho}_1^2 \epsilon +
\frac{16}{3} \gamma_1^4 \tilde{\rho}_1^3 \epsilon^3 \right].
\end{eqnarray*}

Theorem \ref{theo:2bras2}$(b)$  follows by minimizing \tcr{$(\gamma_1,\tilde{\rho}_1,\epsilon) \longmapsto c(\gamma_1,\tilde{\rho}_1,\varepsilon)$ under the}
the constraints:
$$
 \epsilon\leq 1/3, \,
\frac{1}{3\sqrt{2} \tilde{\rho}_1} \leq \gamma_1^2 \leq \frac{3}{2(1
+\tilde{\rho}_1)},	 \, \tilde{\rho}_1 \leq 227/232.
$$
 The ``best" upper bound was obtained by setting
$ 
\gamma_1= 0.89, \tilde{\rho}_1=0.38,\epsilon=1/9,
$
leading to the regret upper bound
$$
\bar{R}_n \leq 44 \sqrt{n}.
$$\end{proof}

\subsection{Proof of Theorems \ref{theo:2bras1} and \ref{theo:2bras2} $(a)$}\label{subsec:prooftheo1}
We prove these  results together. We thus consider $\gamma_1\in(0,1)$, $\rho_1\in(0,1)$ and $\sigma\in[0,1]$. A variant of Proposition \ref{prop:increment} concerning the increase of exponent is still valid. First, it can be observed that if we set $\varepsilon_r=r-1/2$ (so that $\alpha_r=\pi/2$), then Lemma \ref{lemma:sum} can be applied with $\tilde{n}\ge (\frac{\pi}{2}\gamma_1)^{-2}$. Thus, we set $n_0(\lambda):=\lfloor \frac{\lambda^2}{\pi^2}\rfloor+1$ with $\lambda\ge 2\gamma_1^{-1}$. After a simple adaptation of the proof of Proposition \ref{prop:increment}, it can be deduced that for every $r\ge1$, 
 $$
\sup_{n \geq n_0(\lambda)} \mathbb{E} \znr \leq \mathbb{E} \znzr + 
\frac{2r}{\pi} \left[\tilde{\rho}_1 + h_r(\gamma_{n_0(\lambda)})+ \pi \sup_{n \geq n_0(\lambda)} \znrr \right].$$
By an iteration, it follows by using the fact that $\pi\ES[Z_{n_0(\lambda)}^{(i)}]\le \pi\gamma_{n_0(\lambda)}^{-1}\le  \gamma_1^{-1}(\lambda+1)$ that for every $r\ge 1$, some constants $C_r^1(\lambda)$ and $C_r^2(\lambda)$ exist (depending only on $\sigma$, $\gamma_1$ and $\rho_1$) such that, 
\begin{equation}\label{dsuqoiu}
\sup_{n \geq n_0(\lambda)} \pi \mathbb{E}[Y_n] \leq C_r^1(\lambda) +C_r^2(\lambda) \pi \sup_{n \geq n_0(\lambda)}  \ES\znrr.
\end{equation}
It remains to upper bound $\sup_{n \geq n_0(\lambda)}  \ES \znr$ for $r$ large enough. Once again, a simple adaptation of the proof of Lemma \ref{lemme:decompzn3} for $r\ge 3$ yields:
$$ \ES[\Delta Z_{n+1}^{(r)}|{\cal F}_n]=\gamma_{n+1}(1-X_{n})^{r-1} P_{n}^{(r)}(X_{n}) + \Delta R_{n}^{(r)}.
$$
with 
\begin{align}
\lefteqn{
P_n^{(r)}(x)} \nonumber \\ &= \frac{(1-x)^2}{\gamma_{n+1}}(\varepsilon_n-r\pi x)-r \tilde{\rho}_1(1-x)\kappa_{\sigma}(x) 
+\binom{r}{r-2}\left(x(1-x)^2  p_1+x^2(1-x)p_2\right)\nonumber\\& +\gamma_{n+1}\binom{r}{r-3}\left(-x(1-x)^2p_1+x^3p_2\right)
\end{align}
and $\Delta R_n^{(r)}\le C_r\gamma_{n+1}^3$ (where $C_r$ does not depend on $\pi$). We want to prove that $P_n^{(r)}$ is negative on $[0,1-\xi_n]$ with $\xi_n=\xi\gamma_{n+1}\in(0,1)$ where $\xi$ is a constant to be calibrated. We follow the lines of the proof of Proposition \ref{prop:etudePn}, but we can use some rougher arguments since we are not looking  for explicit constants. First, $P_n^{(r)}(0)=\frac{\varepsilon_n}{\gamma_{n+1}}-r\tilde{\rho_1}\kappa_\sigma(0)$, so that:
$$P_n^{(r)}(0)<0\quad \Longleftrightarrow\quad \gamma_1\rho_1>\frac{1}{r\sqrt{2}(1-\sigma p_2)}.$$
On the one hand, for every $\sigma<1$, it is possible to find an $r$ sufficiently large for which this condition holds. On the other hand, when $\sigma=1$ (case of Theorem \ref{theo:2bras1}), we then need to assume that a $\delta>0$  exists such that $p_2<1-\delta$ (in this case, the condition is satisfied if $r>(\gamma_1\rho_1\sqrt{2}\delta)^{-1}$). For such an $r$, it can be observed that the leading coefficient $a_n^{(r)}$ (related to $x^3$) is:
$$a_n^{(r)}=\left(-\frac{r}{\gamma_{n+1}}+\binom{r}{r-2}+r\sigma\tilde{\rho}_1-\gamma_{n+1}\right)\pi.$$
It can therefore be deduced that $a_n^{(r)}$ is negative for every  $n\ge n_1^\sigma$  where:
$$ n_1^\sigma:=\left\lfloor\gamma_1^2\left(\frac{r-1}{2}+\sigma\tilde{\rho}_1\right)^2\right\rfloor.$$
Assume that $\lambda\ge \sqrt{n_1^\sigma}$ in order to obtain $n_0(\lambda)\ge n_1^\sigma$.  Since $P_n^{(r)}(1)=\gamma_{n+1}\binom{r}{r-3}p_2>0$ and ${\rm deg}(P_n^{(r)})=3$, it follows that $P_n^{(r)}$ has exactly one root in $(0,1)$ for every $n\ge {n}_0$ and that  $P_n^{(r)}$ is negative on $[0,1-\xi_n]$ as soon as $P_n^{(r)}(1-\xi_n)<0$. Let $n$ be such that $\xi\gamma_{n+1}\le 1/2$. Then, some rough estimations yield that $P_n^{(r)}(1-\xi_n)$ is negative if
$$ \frac{r\pi}{2}\xi^2-c_r \xi-1>0,$$
where $c_r$ is a constant that does not depend on $\pi$. We then check that another constant $\eta_r$  exists such that the previous property is fulfilled if 
$\xi\ge \eta_r/\pi$. Then, $P_n^{(r)}(1-\frac{\eta_r}{\pi}\gamma_{n+1})<0$ is negative as soon as $\xi\gamma_{n+1}<1/2$. This is true for every $n\ge n_0(\lambda)$ as soon as 
$\lambda\ge 2\gamma_1 \eta_r$. We can conclude  from what preceeds that an $r\ge 3$ and $\lambda>0$ exist such that for every $n\ge n_0(\lambda)$, for every $(p_1,p_2)\in[0,1]^2$, such that $p_1>p_2$ (resp. $p_1>p_2$ and $p_2<1-\delta$) if $\sigma<1$ (resp. if $\sigma=1$) 
$$\ES[\Delta Z_{n+1}^{(r)}]\le \gamma_{n+1}\sup_{t\in[1-\frac{\eta_r}{\pi}\gamma_{n+1},1]}(1-t)P_{n}^{(r)}(t) + C_r\gamma_{n+1}^3.$$
Using $\gamma_{n+1}\le \pi/\lambda$ if $n\ge n_0(\lambda)$,  a constant $C_\lambda$ exists such that on  
$$\forall t \in [1-\frac{\eta_r}{\pi}\gamma_{n+1},1] \qquad P_n^{(r)}(t)\le C_\lambda\gamma_{n+1}/\pi.$$
Under the previous conditions,  we deduce
$$\sup_{\pi}\left(\pi \sup_{n \geq n_0(\lambda)}  \ES\znr\right)\le \sup_{\pi}\left(\pi \sum_{n\ge n_0}C{\gamma_{n+1}^3}\left({\pi}^{-2}+\pi^{-1}\right)\right)<+\infty.$$
The result follows by plugging this inequality into \eqref{dsuqoiu}.
%
%
%
%

\section{Almost sure and weak limit of the over-penalized bandit}\label{sec:appendix_multi}

We provide here the proofs of Propositions \ref{prop:multi1} and \ref{prop:multiweak}.
For the sake of simplicity, we restrict our study to $\sigma=1$ (always over-penalization of the bandit), and the argument can be adapted for any values of $\sigma \in (0,1]$.

\subsection{A.s. convergence of the multi-armed bandit (Proposition \ref{prop:multi1})}

Recall first that $X_{n} = (X_{n,1},...,X_{n,d})$, the multi-armed penalized bandit \eqref{eq:pen_bandit}
makes it possible  to define for $i \in \lbrace 2,...,d\rbrace$,
\begin{eqnarray*}
X_{n+1,i} &=& X_{n,i} + \gamma_{n+1}h_{i}(X_{n}) + \gamma_{n+1}\rho_{n+1}\kappa_{i}(X_{n}) +  \gamma_{n+1}\Delta M_{n+1,i},
\end{eqnarray*} 
where the main part of the drift $h_{i}$ is defined as
$$
h_{i}(x_{1},...,x_{d}) = (1-x_{i})x_{i}p_{i} - x_{i}\sum\limits_{j \neq i}x_{j}p_{j},$$
and the penalty drift is
$$
\kappa_{i}(x_{1},...,x_{d}) = -x_{i}^{2}(1-p_{i}) + \frac{1}{d-1}\sum\limits_{j \neq i}x_{j}^{2}(1-p_{j}).
$$
Hence, the martingale increment is simply obtained as

\begin{eqnarray*}
&\Delta M_{n+1,i}& = ((1-X_{n,i})1_{V_{n+1,i},A_{n+1,i}}-X_{n,i}\sum\limits_{j \neq i}1_{V_{n+1,j},A_{n+1,j}} - h_{i}(X_{n}))\\ &-& \rho_{n+1}(X_{n,i}1_{V_{n+1,i},A_{n+1,i}^{c}} - \frac{1}{d-1}\sum\limits_{j \neq i}X_{n,j}1_{V_{n+1,j},A_{n+1,j}^{c}} +\kappa_{i}(X_{n}))
 \end{eqnarray*}
 
\begin{proof}[Proof of  Proposition \ref{prop:multi1}]

\noindent We start by (i) and identify the stationary points of the ODE method. The ODE $ \dot{ x} = h(x) $ possesses a finite number of equilibria that can be easily identified. 
 We begin by solving the equation $h_{1}(x) =0$. Since $$h_{1}(x) =x_{1}\sum\limits_{i=2}^{d}x_{j}(p_{1}-p_{j})\geq 0,$$
  we either have $x_{1}=1$ and $x_{2}=...=x_{d}=0$ or $x_{1}=0$.\\ Then, the equation $h_{2}(x)=0$ with $x_{1}=0$ may be reduced to 
  $$x_{2}\sum\limits_{i=3}^{d}x_{j}(p_{2}-p_{j})\geq 0.$$ The same argument leads to $x_{2}=1$ or $x_{2}=0$ and a straightforward recursion shows that  the equilibria of the ODE are $(\delta^{i})_{1 \leq i \leq d}$, with $(\delta^{i})_{1 \leq i \leq d}$  defined as
\begin{eqnarray*}
\delta^i_{i} = 1 \quad\text{and}\quad \delta^i_{j}=0\quad \forall j\neq i.
\end{eqnarray*}
  
Let us emphasize that to discriminate among these equilibria, it is not possible to use the second derivative criterion that relies on $\left( \displaystyle \frac{\partial h_{i}}{\partial x_j}\right)_{i,j}$ to establish their stability.
Instead, it is possible to check that $\delta^{1}$ fulfills the Lyapunov certificate with the function $V(x) = (x_2^2+\ldots + x_d^2)$. If we denote $h=(h_1,\ldots, h_d)$, we then have:
$$
\langle \nabla V(x) , h(x) \rangle = \sum_{j=2}^d x_j^2 \sum_{k \neq j} x_k (p_j-p_k).
$$
Considering $x$ in a closed neighborhood of $\delta^{1}$ defined as $x_j \leq \epsilon/d, \, \forall j \geq 2$ (implying that $x_1>1-\epsilon$), we see that:
\begin{eqnarray*}
\langle \nabla V(x) , h(x) \rangle & = & x_1 \sum_{j=2}^d x_j^2 (p_j - p_1) + \sum_{k=2}^d x_k^2 \sum_{j \neq k, j >1}x_j (p_k-p_j) \\
& \leq & -(1-\epsilon) (p_1-p_2)\sum_{j=2}^d x_j^2 + \epsilon \sum_{j=2}^d x_j^2,
\end{eqnarray*}
and the term above is negative as soon as $\epsilon$ is chosen such that:
$$
\epsilon \leq \frac{1}{p_1-p_2+1}.
$$
In contrast, the other equilibria $(\delta^j), j \neq 1$ are unstable: this can be easily deduced from the unstability of the two-armed bandit by testing the first arm vs. the arm $j$.

Since the martingale increment $\Delta M_{n+1,i}$ is uniformly bounded, we can apply the Kushner-Clark theorem (see  \cite{KushnerClarck}) and can conclude that $(X_{n,i})_{n \geq 0}$ either converges to  1 or 0 a.s. As a consequence, it is also true that  $(X_n)_{n \geq 0}$ converges a.s.
We now  make this limit explicit and show that $(X_{n})_{n \geq 0}$ converges toward $(1,...,0)$ a.s.
We start by noticing that $h_{1}(x) =x_1 \sum_{j \geq 2} x_j (p-1-p_j) \geq 0$, which implies that:
\begin{eqnarray}\label{eq:contr}
X_{n,1} &\geq & X_{0,1}+ \sum\limits_{j =1}^{n-1}\gamma_{j}\rho_{j}\kappa_{j-1,1}(X_{j-1}) +  \sum\limits_{j =1}^{n-1}\gamma_{j}\Delta M_{j}.
\end{eqnarray}

\noindent The martingale increment $\Delta M_{j}$ is bounded and a large enough $C$ exists such that  $\Delta M_{j}\leq \sqrt{C}$. This implies that:

$$\left\Vert \sum\limits_{j =1}^{n-1}\gamma_{j}\Delta M_{j}\right\Vert^{2}_{L^2} \leq C\sum\limits_{j =1}^{n-1}\gamma_{j}^{2}\leq C\sup\limits_{j \in \mathbb{N}}\left(\frac{\gamma_{j}}{\rho_{j}}\right)\sum\limits_{j =1}^{n-1}\gamma_{j}\rho_{j}.$$
\noindent Since $\sum_{}\rho_j \gamma_j = + \infty$, we can deduce that 
$$
\lim_{n \longrightarrow + \infty} \frac{\mathbb{E} \left[ \displaystyle\sum_{j=1}^n \gamma_j \Delta M_j\right]^2}{\sum\limits_{j =1}^{n-1}\gamma_{j}\rho_{j}} = 0 \qquad \text{so that} \qquad \limsup\limits_{n\rightarrow\infty}\dfrac{\sum\limits_{j =1}^{n-1}\gamma_{j}\Delta M_{j}}{\sum\limits_{j =1}^{n-1}\gamma_{j}\rho_{j}} \geq 0.
$$

\noindent We now consider an event $\omega \in \lbrace X_{\infty,1}=0\rbrace$. We have:
$$
\lim_{n \longrightarrow + \infty} \kappa_{1}(X_{n}(\omega)) = \frac{1}{d-1}\sum_{k \geq 2} (1-p_k) X_{\infty,k}(\omega)^2,
$$
and 
according to the Toeplitz Lemma we deduce that

$$\lim\limits_{n\rightarrow\infty}\dfrac{\sum\limits_{j =1}^{n-1}\gamma_{j}\rho_{j}\kappa_{1}(X_{j-1}(\omega))}{\sum\limits_{j =1}^{n-1}\gamma_{j}\rho_{j}} = \frac{1}{d-1}\sum\limits_{k \geq 2}(1-p_{k}) X_{\infty,k}(\omega)^{2}>0.$$

\noindent Putting together this last remark with Equation \eqref{eq:contr} leads to the conclusion

$$\limsup\limits_{n\rightarrow\infty}\dfrac{X_{n,1}(\omega)}{\sum\limits_{j =1}^{n-1}\gamma_{j}\rho_{j}} > 0.$$

\noindent We obtain a contradiction with the boundedness of $(X_n)_{n \geq 1}$ and conclude that $\mathbb{P}(X_{\infty,1} = 0) = 0$.
For (ii), we refer to \cite{Lamberton_Pages} since the arguments here are similar.
\end{proof}

\subsection{Weak convergence of the normalized bandit (Proposition \ref{prop:multiweak}) }

The proof of the weak convergence follows the lines of \cite{Lamberton_Pages}. The idea is to prove the tightness of the pseudo-trajectories associated to the normalized sequence and to then show that any weak limit of this sequence is a solution of the martingale problem $({\cal L},{\cal C}^1_K(\ER_+,(\ER_+)^{d-1})$ where ${\cal L}$ is the infinitesimal generator defined in Proposition \ref{prop:multiweak}. Then, proving that uniqueness holds for the solutions of the martingale problem and for the invariant distribution, the convergence follows. Here, we choose to only detail the key step of the characterization of the limit. The rest of the proof can be obtained by a simple generalization of that of  \cite{Lamberton_Pages}.

\begin{proposition} Let $f$ be a continuously differentiable function with compact support in $\mathbb{R}_{+}^{d-1}$. We have 
$$\mathbb{E}(f(Y_{n+1,2},...,Y_{n+1,d})-f(Y_{n,2},...,Y_{n,d})|{\cal F}_n) = \gamma_{n+1}\mathcal{L}_d f(Y_{n,2},...,Y_{n,d})+o_{P}(1),$$
where $\mathcal{L}_d$ is the PDMP generator defined in \eqref{generateur_dbras} and ${\cal F}_n=\sigma(Y_k,k\le n)$.
\end{proposition}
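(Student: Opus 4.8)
The plan is to compute the conditional increment $\mathbb{E}\big[f(Y_{n+1,2},\dots,Y_{n+1,d})-f(Y_{n,2},\dots,Y_{n,d})\mid\mathcal{F}_n\big]$ exactly from the one-step recursion \eqref{eq:pen_bandit}, keeping careful track of the orders in $n$. Throughout I use that, since $\alpha=\beta$, one has $\gamma_{n+1}/\rho_{n+1}=\gamma_1/\rho_1=g$ \emph{exactly}; that $X_{n,i}=\rho_n Y_{n,i}$ with $\rho_n/\rho_{n+1}=1+O(1/n)$; and that $\tilde\epsilon_n:=\rho_{n+1}^{-1}-\rho_n^{-1}=O(n^{\beta-1})$ so that $\rho_n\tilde\epsilon_n=o(\gamma_{n+1})$. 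For clarity I would first treat $\sigma=1$ (so $B_{n+1}^\sigma\equiv1$). Conditionally on $\mathcal{F}_n$, the pair $(I_{n+1},A_{n+1}^{I_{n+1}})$ takes the $2d$ values $(k,a)$, $k\in\{1,\dots,d\}$, $a\in\{0,1\}$, with probability $X_{n,k}\,p_k^{a}(1-p_k)^{1-a}$, and on the event $\{I_{n+1}=k,A_{n+1}^k=a\}$ the increment $Y_{n+1}-Y_n$ equals an $\mathcal{F}_n$-measurable vector $\delta_n^{(k,a)}$ which I read off from \eqref{eq:pen_bandit} via $Y_{n+1,i}-Y_{n,i}=X_{n,i}\tilde\epsilon_n+\rho_{n+1}^{-1}\Delta X_{n+1,i}$. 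Hence the conditional increment equals $\sum_{k,a}X_{n,k}p_k^{a}(1-p_k)^{1-a}\big(f(Y_n+\delta_n^{(k,a)})-f(Y_n)\big)$, and the whole proof is the identification of these $2d$ terms.

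I would then split the $2d$ outcomes into three groups. \emph{(i) Jumps.} For $j\ge2$, on $\{I_{n+1}=j,A_{n+1}^j=1\}$ one finds $\delta_n^{(j,1)}=g\,e_j+O(\rho_n)$, where $e_j$ is the coordinate vector in direction $j$: coordinate $j$ increases by $g(1-X_{n,j})+X_{n,j}\tilde\epsilon_n=g+O(\rho_n)$, while every other coordinate changes by $X_{n,i}(\tilde\epsilon_n-g)=O(\rho_n)$. This event has probability $X_{n,j}p_j=\rho_nY_{n,j}p_j$, and by uniform continuity of $f$ (compact support) $f(Y_n+\delta_n^{(j,1)})-f(Y_n)=f(Y_n+g\,e_j)-f(Y_n)+o(1)$; using $\rho_n=g^{-1}\gamma_{n+1}(1+o(1))$ these events contribute $\gamma_{n+1}\sum_{j\ge2}\tfrac{p_jY_{n,j}}{g}\big(f(Y_n+g\,e_j)-f(Y_n)\big)+o(\gamma_{n+1})$, i.e.\ the jump part of $\mathcal{L}_d f$. \emph{(ii) Drifts.} On $\{I_{n+1}=1,A_{n+1}^1=1\}$ (probability $p_1(1+o(1))$) the penalty term vanishes ($1-A^1B=0$) and one gets $\delta_n^{(1,1)}=-\gamma_{n+1}(Y_{n,2},\dots,Y_{n,d})+o(\gamma_{n+1})$; on $\{I_{n+1}=1,A_{n+1}^1=0\}$ (probability $(1-p_1)(1+o(1))$) the penalty is redistributed and $\delta_n^{(1,0)}=\tfrac{\gamma_{n+1}}{d-1}(1,\dots,1)+o(\gamma_{n+1})$. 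A first-order Taylor expansion of $f\in\mathcal{C}^1$ (remainder $o(\gamma_{n+1})$) turns these two events into $\gamma_{n+1}\sum_{i\ge2}\big(\tfrac{1-p_1}{d-1}-p_1Y_{n,i}\big)\partial_i f(Y_n)+o(\gamma_{n+1})$, i.e.\ the transport part of $\mathcal{L}_d f$ (with $1-\sigma p_1=1-p_1$ here). \emph{(iii) Negligible.} For $j\ge2$, on $\{I_{n+1}=j,A_{n+1}^j=0\}$ one has probability $O(\rho_n)$ and $\|\delta_n^{(j,0)}\|=O(\rho_n\gamma_{n+1})$, hence contribution $O(\rho_n^2\gamma_{n+1})=o(\gamma_{n+1})$. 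Adding the three groups yields $\gamma_{n+1}\mathcal{L}_d f(Y_{n,2},\dots,Y_{n,d})+o(\gamma_{n+1})$.

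For the error bookkeeping I would use crucially that $f$ is compactly supported: outside an enlargement of $\mathrm{supp}\,f$ by $g+1$, every difference $f(Y_n+\delta)-f(Y_n)$ entering above vanishes, so all $Y_n$-dependent prefactors are bounded and the whole remainder is bounded by a deterministic sequence of order $n^{-(1-\beta)}$, hence $o(\gamma_{n+1})$ \emph{uniformly in $\omega$}; in particular no tightness input is needed at this stage and $o(\gamma_{n+1})\subset o_P(1)$. Finally, the general case $\sigma\in(0,1]$ is handled identically after also conditioning on $B_{n+1}^\sigma\in\{0,1\}$: the sub-case $B_{n+1}^\sigma=0$ of $\{I_{n+1}=1,A_{n+1}^1=1\}$ (probability $p_1(1-\sigma)$) reinstates the redistribution of the penalty and adds an extra drift $\tfrac{\gamma_{n+1}}{d-1}(1,\dots,1)$ to the group (ii) computation, which is exactly what converts $1-p_1$ into $1-\sigma p_1$ in the transport coefficient; the jump terms (i) are $\sigma$-independent. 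The main obstacle I anticipate is purely the careful case analysis of the $2d$ (resp.\ $3d$) outcomes of \eqref{eq:pen_bandit} — in particular verifying that on a jump event the non-jumping coordinates move only by $O(\rho_n)=o(\gamma_{n+1})$ — together with disentangling the three scales $\rho_n$, $\gamma_{n+1}$, $\tilde\epsilon_n$ and re-expressing everything against the single rate $\gamma_{n+1}$ via $\gamma_{n+1}=g\rho_{n+1}$ and $\rho_n/\rho_{n+1}\to1$.
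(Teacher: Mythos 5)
Your proposal is correct in substance and reaches the generator $\mathcal{L}_d$ by the same underlying insight as the paper (the event $\{I_{n+1}=j,\,A_{n+1}^j=1\}$ for $j\geq 2$ is the jump mechanism, the events with $I_{n+1}=1$ produce the transport term), but the bookkeeping is organized differently. The paper rewrites the $Y_{n,i}$-recursion into a canonical ``drift $+$ jump-martingale $+$ $o_P(1)$'' form and then Taylor-expands $f$ coordinate by coordinate via a telescoping sum; you instead enumerate directly the $2d$ (resp.\ $3d$ with $\sigma$) atomic outcomes $(I_{n+1},A_{n+1}^{I_{n+1}},B_{n+1}^\sigma)$, attach an $\mathcal{F}_n$-measurable increment $\delta_n^{(k,a)}$ to each, and sort them into jump / drift / negligible. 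This is more elementary and has a genuine advantage: the paper's $o_P(1)$ bookkeeping (its quantity $C_{n,i}$) contains a factor $Y_{n,i}$ and so implicitly presupposes tightness of $(Y_n)$, whereas your compact-support argument -- differences $f(Y_n+\delta)-f(Y_n)$ vanish once $Y_n$ leaves a fixed enlargement of $\mathrm{supp}f$ by $g+O(1)$, and on that set all $Y_n$-dependent prefactors are bounded -- makes the whole remainder deterministically $o(\gamma_{n+1})$ without any tightness input. Your identification of $1-\sigma p_1$ from the sub-case $\{I_{n+1}=1,A_{n+1}^1=1,B_{n+1}^\sigma=0\}$ is also exactly right.

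Two small inaccuracies in the error tracking, neither of which is fatal. First, your stated remainder rate $n^{-(1-\beta)}$ is not the right bound: it fails to be $o(\gamma_{n+1})$ at the endpoint $\beta=1/2$ (both are then $n^{-1/2}$), and moreover with $f$ merely $\mathcal{C}^1$ the drift Taylor remainder has no guaranteed polynomial rate. Once $Y_n$ is confined to the fixed compact set where $f$ lives, the dangerous term $X_{n,i}\tilde\epsilon_n=\rho_nY_{n,i}\tilde\epsilon_n$ is $O(n^{-1})=o(\gamma_{n+1})$, the jump-approximation and group (iii) remainders are $O(\rho_n^2)$ and $O(\rho_n^2\gamma_{n+1})$ respectively, and the $\mathcal{C}^1$ Taylor remainder is $o(\gamma_{n+1})$ by uniform continuity of $\nabla f$; all of these are $o(\gamma_{n+1})$ for every $\beta\in(0,1/2]$, so the conclusion stands -- the issue is only that ``$n^{-(1-\beta)}$'' should be dropped in favor of this case-by-case $o(\gamma_{n+1})$. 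Second, in the jump group you should more carefully invoke Lipschitz continuity of $f$ (which $\mathcal{C}^1_c$ guarantees), or at least the fact that $\|\delta_n^{(j,1)}-g\,e_j\|=O(\rho_n)$ on the enlarged support, to turn the ``$+\,o(1)$'' into an estimate that, after multiplication by the total jump probability $\sum_{j\geq2}X_{n,j}p_j=O(\rho_n)$, is manifestly $o(\gamma_{n+1})$; ``uniform continuity'' alone makes this last multiplication step look unjustified.
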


\begin{proof} Since the proof does not depend on $\sigma$, we assume that $\sigma=1$ for the sake of clarity. We first give an alternative expression for the variables $Y_{n,i}$ for $i\geq 2$.
\begin{eqnarray*}
Y_{n+1,i}&=& Y_{n,i} + \gamma_{n+1}\left(\frac{1-p_{1}}{d-1}-(p_{1}-p_{i})Y_{n,i}\right) +  \gamma_{n+1}C_{n,i}-g\Delta M_{n+1,i},
\end{eqnarray*}

\noindent where $C_{n,i} = (\kappa_{i}(X_{n})-\frac{1-p_{1}}{d-1})+Y_{n,i}(p_{1}-p_{i}+(\epsilon_{n}+\frac{\rho_{n}}{\rho_{n+1}}(p_{i}-\sum\limits_{j\neq i}X_{n,j}p_{j}))) = o_{P}(1)$
since $(\epsilon_n)_{n \geq 0}$ converges 0 and $(X_{n,i})_{n \geq 0}$ converges to 0 in probability for $i\geq 2$.
 We rewrite this as follows
\begin{eqnarray*}
Y_{n+1,i}&=& Y_{n,i} + \gamma_{n+1}\left(\frac{1-p_{1}}{d-1}-(p_{1}-p_{i})Y_{n,i}+C_{n,i}\right) +  G_{n,i}+g\Delta \tilde{M} _{n+1,i},
\end{eqnarray*}

where $G_{n,i} = g(1-X_{n,i})( 1_{V_{n+1,i},A_{n+1,i}}-X_{n,i}p_{i})$ and $\Delta \tilde{M} _{n+1,i} = \Delta M_{n+1,i}-G_{n,i}$. \\ We consider a function $f \in \mathcal{C}^1(\mathbb{R}_{+}^{d-1})$ with a compact support.
\begin{eqnarray*}
f(Y_{n+1})-f(Y_{n}) = \sum\limits_{i=2}^{d}f(Y_{n+1,2},...Y_{n+1,i},....Y_{n+1,d})-f(Y_{n,2},...Y_{n,i},....Y_{n,d}).
\end{eqnarray*}

\noindent We will use the following notation $F_{i}(Y_{k}) = f(Y_{n,2},...Y_{k,i},....Y_{n,d})$. This means that the first $i-1$ variables are $(Y_{n,2},Y_{n,3},...)$ and the last $d-i$  ones are: $(Y_{n+1,i+1},Y_{n+1,i+2},...,Y_{n+1,d})$. We have:
\begin{eqnarray*}
F_{i}(Y_{n+1,i})-F_{i}(Y_{n,i}) = F_{i}(Y_{n+1,i})-F_{i}(\overline{Y}_{n,i})+F_{i}(\overline{Y}_{n,i})-F_{i}(Y_{n,i}),
\end{eqnarray*}

\noindent where $$\widetilde{Y}_{n,i} =  Y_{n,i} + \gamma_{n+1}\left(\frac{1-p_{1}}{d-1}-(p_{1}-p_{i})Y_{n,i}+C_{n,i}\right),$$ and $$\overline{Y}_{n,i} = \widetilde{Y}_{n,i}  + G_{n,i}.$$ 

\noindent We begin by writing:
\begin{eqnarray*}
 F_{i}(Y_{n+1,i})-F_{i}(\overline{Y}_{n,i}) = \partial_{i}F_{i}(\widetilde{Y}_{n,i})\Delta \widetilde{M} _{n+1,i} + \gamma_{n+1} V_{n+1,i},
\end{eqnarray*}
\\
\noindent where the first order Taylor approximation formula yields:
$$ \exists \, \theta\in [0,1]: \qquad V_{n+1,i} = \left[F_{i}(\widetilde{Y}_{n,i}+\theta\Delta \widetilde{M} _{n+1,i})- F_{i}(\widetilde{Y}_{n,i})\right]\Delta \widetilde{M} _{n+1,i}.$$
As a consequence, $V_{n+1,i} =o_{P}(1)$  and we are now going to prove that:
\begin{eqnarray*}
\mathbb{P}-\lim\limits_{n\rightarrow\infty}\mathbb{E}\left(\frac{ F_{i}(Y_{n+1})-F_{i}(Y_{n})- \gamma_{n+1}\mathcal{A}_{i}F_{i}(Y_{n})}{\gamma_{n+1}}\vert \mathcal{F}_{n}\right) =0,
\end{eqnarray*} 

\noindent where:
\begin{eqnarray*}
\mathcal{A}_{i}f(Y_{2},...,Y_{d}) &=&\frac{p_{i}Y_{i}}{g}(f(Y_{2},...,Y_{i}+g,...,Y_{d})-f(Y_{2},...,Y_{i},...Y_{d}))\\
&+&\left(\frac{1-p_{1}}{d-1}-p_{1}Y_{i}\right)\partial_{i}f(Y_{2},...,Y_{d}).
\end{eqnarray*}
We compute:
\begin{eqnarray*}
\mathbb{E}(F_{i}(\overline{Y}_{n,i}))\vert \mathcal{F}_{n,i}) &=& p_{i}X_{n,i}F_{i}(\widetilde{Y}_{n,i}+g(1-X_{n,i})(1-p_{i}X_{n,i}))\\ 
&+& (1-gp_{i}X_{n,i})F_{i}(\widetilde{Y}_{n,i}-gp_{i}X_{n,i}(1-X_{n,i}))).
\end{eqnarray*}

\noindent  Let us decompose the r.h.s. of the above equation into two parts,
denoted by: \begin{equation}\label{eq:Fni}F _{n,i} = p_{i}X_{n,i}(F_{i}(\widetilde{Y}_{n,i}+g(1-X_{n,i})(1-p_{i}X_{n,i}))-F_{i}(Y_{n,i})),\end{equation}
 and \begin{equation}\label{eq:Gni}G_{n,i}=  (1-gp_{i}X_{n,i})(F_{i}(\tilde{Y}_{n,i}-gp_{i}X_{n,i}(1-X_{n,i})))-F_{i}(Y_{n,i})).\end{equation}

Note that \eqref{eq:Fni} is the jump part of the PDMP and \eqref{eq:Gni} is the deterministic one. 
If $i \geq 2$,  $(X_{n,i})_{n \geq 1}$ converges to 0 in probability and  $\rho_n{\gamma_{n+1}}^{-1}=g + o(\rho_n)$. Thus:

\begin{eqnarray*}
{\gamma_{n+1}}^{-1} F _{n,i} &=& {\gamma_{n+1}}^{-1}\rho_n p_{i}Y_{n,i}(F_{i}(Y_{n,i}+g+o_P(1))-F_{i}(Y_{n,i}))\\
&=& \frac{p_i Y_{n,i}}{g}\left(1+o(\rho_n)\right)\left[ F_{i}(Y_{n,i}+g+o_P(1))-F_{i}(Y_{n,i})\right].
\end{eqnarray*}
As a consequence,  the asymptotic behavior of \eqref{eq:Fni} is given by
\begin{eqnarray*}
\mathbb{P}-\lim\limits_{n\rightarrow\infty}\left(\frac{F_{n,i}}{\gamma_{n+1}}-p_{i}Y_{n,i}\frac{F_{i}(Y_{n,i}+g)-F_{i}(Y_{n,i})}{g} \right) =0.
\end{eqnarray*}
We now study \eqref{eq:Gni} and compute:
\begin{eqnarray*}
\widetilde{Y}_{n,i}-gX_{n,i}(1-p_{i}X_{n,i}) &=& Y_{n,i} + \gamma_{n+1}\left(\frac{1-p_{1}}{d-1}-p_{1}Y_{n,i}\right)\\
 &+&\gamma_{n+1}p_{i}Y_{n,i}-gp_i X_{n,i}(1-X_{n,i})+\gamma_{n+1}C_{n,i}\\
 &=& Y_{n,i} + \gamma_{n+1}\left(\frac{1-p_{1}}{d-1}-p_{1}Y_{n,i}\right)\\
&&  +\underbrace{\gamma_{n+1}p_{i}Y_{n,i} -g\rho_n p_i Y_{n,i}(1-X_{n,i})+\gamma_{n+1}C_{n,i}}_{:=\gamma_{n+1} \widetilde{C}_{n,i}},
\end{eqnarray*}

\noindent where $g\rho_n = \gamma_n$. Since $\widetilde{C}_{n,i}$ converges to $0$ in probability, we obtain:
\begin{eqnarray*}
\lefteqn{
\gamma_{n+1}^{-1} G_{n,i}}\\
 &=& \gamma_{n+1}^{-1}(1+o(\rho_n))\left( F_{i}(Y_{n,i} + \gamma_{n+1}\left[\frac{1-p_{1}}{d-1}-p_{1}Y_{n,i}\right]+\gamma_{n+1}\tilde{C}_{n,i})-F_{i}(Y_{n,i})\right)\\
&=& \left[\frac{1-p_{1}}{d-1}-p_{1}Y_{n,i}\right] \dfrac{\left(F_{i}(Y_{n,i} + \gamma_{n+1}\left[\frac{1-p_{1}}{d-1}-p_{1}Y_{n,i}\right]+\gamma_{n+1}\tilde{C}_{n,i})-F_{i}(Y_{n,i})\right)}{\gamma_{n+1} \left[\frac{1-p_{1}}{d-1}-p_{1}Y_{n,i}\right]}\\
&+&o_P(1).
\end{eqnarray*}
We finally obtain the limiting behavior of \eqref{eq:Gni}:

$$\mathbb{P}-\lim\limits_{n\rightarrow\infty}\left(\frac{G_{n,i}}{\gamma_{n+1}}-\left(\frac{1-p_{1}}{d-1}-p_{1}Y_{i}\right)\partial_{i}F_{i}(Y_{n,i})\right) =0.$$
This ends the proof of the proposition.
\end{proof}

\section{Ergodicity of the PDMP}\label{sec:ergodicity}

From now on, the variable $(X_t)_{t \geq 0}$ will refer to a trajectory of the PDMP associated with the normalized (over)-penalized bandit and bearing no relation to the multi-armed bandit  sequence $(X_n)_{n \geq 1}$. 

\subsection{Wasserstein results}
We begin the study of the ergodicity of the PDMP whose infinitesimal generator is \eqref{generateur} with some computations of the moments of the process.

\begin{lemma}\label{prop:moments} Let $(X_{t})_{t\geq 0}$ be a Markov process, whose generator $\mathcal{L}$ is defined by (\ref{generateur}). If $\pi :=b-cg>0$, then $\sup\ES[(X_{t}^x)^p]\le C(1+|x|^p)$. In particular, the invariant distribution $\pi$ has moments of any order and
$$\forall t \geq 0 \qquad \mathbb{E}(X_{t}) = \frac{a}{\pi}+\left(\mathbb{E}( X_{0})-\frac{a}{\pi}\right)e^{-t\pi}$$
\end{lemma}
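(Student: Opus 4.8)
The plan is to exploit the action of the generator $\mathcal{L}$ on the polynomial test functions $V_p(x) = x^p$, and to set up a differential inequality for $t \mapsto \ES[(X_t^x)^p]$ via Dynkin's formula. First I would compute $\mathcal{L}V_p$: from \eqref{generateur},
$$
\mathcal{L}V_p(x) = p(a-bx)x^{p-1} + cx\big((x+g)^p - x^p\big) = p a x^{p-1} - pb x^p + cx\sum_{j=0}^{p-1}\binom{p}{j} g^{p-j} x^{j}.
$$
The leading term in $x^p$ is $(-pb + cpg)x^p = -p\pi x^p$ (using $\pi = b-cg$ and the $j=p-1$ binomial coefficient $\binom{p}{p-1}=p$), and all remaining terms are a polynomial in $x$ of degree at most $p-1$. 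Hence there is a constant $K_p$ (depending only on $a,b,c,g,p$) such that $\mathcal{L}V_p(x) \le -p\pi x^p + K_p(1+x^{p-1})$ for all $x \ge 0$.

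The second step is an induction on $p$. For $p=0$ the bound is trivial, and for $p=1$ we get $\mathcal{L}V_1(x) = a - \pi x$ exactly, so $m_1(t) := \ES[X_t^x]$ satisfies $m_1'(t) = a - \pi m_1(t)$, which integrates to the stated formula $\ES[X_t] = a/\pi + (\ES[X_0] - a/\pi)e^{-t\pi}$ and gives $\sup_t m_1(t) \le |x| + a/\pi$. For the induction step, assume $\sup_{t\ge0}\ES[(X_t^x)^{p-1}] \le C_{p-1}(1+|x|^{p-1})$; combined with the bound on $\mathcal{L}V_p$, Dynkin's formula (justified by the non-explosion of the PDMP, already noted in the text, plus a standard localization argument) yields $m_p'(t) \le -p\pi m_p(t) + K_p(1 + C_{p-1}(1+|x|^{p-1}))$, so by Grönwall $m_p(t) \le m_p(0) e^{-p\pi t} + \frac{K_p(1 + C_{p-1}(1+|x|^{p-1}))}{p\pi}$, which is bounded by $C_p(1+|x|^p)$ uniformly in $t$. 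Letting $t\to\infty$ and using the weak convergence $\mu_t \to \mu_\infty$ (Theorem \ref{theo:pdmp}) together with this uniform moment bound shows, via uniform integrability, that $\mu_\infty$ has finite moments of every order.

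The main technical point to be careful about — rather than a deep obstacle — is the rigorous justification of Dynkin's formula for the unbounded test functions $V_p$: one must first apply it to $V_p$ truncated/localized by the exit time from a bounded interval $(0,N)$, use the already-established non-explosion to send $N\to\infty$, and invoke monotone/dominated convergence, with the a priori $L^{p-1}$ bound from the induction hypothesis controlling the lower-order terms so that everything stays integrable. Everything else is a routine Grönwall argument once $\mathcal{L}V_p$ has been expanded.
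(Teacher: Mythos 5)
Your proposal is correct and follows essentially the same route as the paper: act with $\mathcal{L}$ on $x\mapsto x^p$, identify the leading term $-p\pi x^p$ coming from $\pi=b-cg$, and run a recursion/Gr\"onwall argument on $\alpha_p(t)=\ES[X_t^p]$, with the $p=1$ case giving the exact ODE $\alpha_1'=a-\pi\alpha_1$. The only real difference is that you spell out the localization needed to justify Dynkin's formula for the unbounded test functions $x^p$ and the passage to $\mu_\infty$'s moments via weak convergence and lower semicontinuity, which the paper leaves implicit.
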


\begin{proof}
 Let us define $f_{p}(x) = x^p$. We have:
\begin{eqnarray}\label{eq:moment}
\mathcal{L}f_{p}(x) &=& p(a-bx)x^{p-1}+cx((x+g)^p-x^p)\nonumber\\
&=& -p\pi f_{p}(x) + paf_{p-1}(x)+c\sum\limits_{k=0}^{p-2}C_{p}^{k}g^{p-k}f_{k+1}(x),
\end{eqnarray}
where we adopt the convention $\Sigma_{\emptyset}=0$.
 If we now define  $\alpha_{p}(t) = \mathbb{E}(X_{t}^{p})$, the previous relation shows that $\alpha_{p}$ satisfies the ODE for any integer $p \geq 1$ defined by
 $$
 \alpha_p(t)' +  p \pi \alpha_p(t)=  p a \alpha_{p-1}(t)+c \sum_{k=0}^{p-2} C_p^k g^{p-k} \alpha_{k+1}(t).
 $$
  For example, with $p=1$ we have $\alpha_{1}'(t) = -\pi\alpha_{1}(t) + a$, which implies that
\begin{eqnarray*}
\alpha_{1}(t) = \frac{a}{\pi}+\left(\mathbb{E}( X_{0})-\frac{a}{\pi}\right)e^{-t\pi}.
\end{eqnarray*}
\noindent The control of the moments of order $p>1$ then follows from a recursion.
\end{proof}

\subsubsection{Rescaled two-armed bandit \& Theorem \ref{theo:pdmp}}

In the following, we will exploit Equation \eqref{eq:moment} to obtain a suitable upper bound of the Wasserstein distance $\mathcal{W}_p$ between the law of $X_t$ and the invariant measure $\mu_{\infty}$ of the PDMP. For this purpose, we note that the generator \eqref{generateur} possesses the stochastic monotonicity property, \textit{i.e.},
 a coupling $(X,Y)$  exists starting from $(x,y)$ (with $x > y$) such that $X_{t} \geq Y_{t}$ for any $t\geq 0$. The increase of the jump rate (with respect to the position) and the positivity of the jumps are of prime importance for this property.
Such a coupling could be built as follows: we only allow simultaneous jumps of both components or a single jump of the highest one (see (\cite{BCG+}) for a similar procedure). The generator of this coupling $(X,Y)$ starting from $(x,y)$ with $x>y$ is given by: 
\begin{eqnarray}\label{generateur_couplage_wass}
\lefteqn{\LW f(x,y) = (a-bx)\partial_{x}f(x,y)+(a-by) \partial_{y}f(x,y)}\nonumber \\
&& + cy\left(f(x+g,y+g)-f(x,y)\right) +	 c(x-y)\left(f(x+g,y)-f(x,y)\right)
\end{eqnarray}
 with a symmetric expression when $y>x$. We now prove the main result.

\begin{proof}[Proof of theorem \ref{theo:pdmp}]
Let $\mu_0$ be  a probability on $\ER_+^*$ and designate $\mu_\infty$ as the invariant distribution of the PDMP. Set 
$${\cal C}_t=\{\nu\in{\cal P}(\ER^2),\nu(dx\times\ER_+)=\mu_t(dx),\nu(\ER_+\times dy)= \mu_{\infty}(dy)\}.$$ 
For any $\nu\in{\cal C}$, let $(X_{t},Y_{t})_{t \geq 0}$ denote the Markov process driven by  (\ref{generateur_couplage_wass}) starting from $\nu$.  From the definition of $\mathcal{W}_p$ and the stationary of  $(Y_t)$, we have for any $t$:
$$\mathcal{W}_{p}(\mu_{t},\mu_{\infty})\leq \inf\{\nu\in{\cal C}_0, \left(\int_{\ER_+^2} \ES[\vert X_{t}^x-Y_{t}^y\vert^p]\nu(dx,dy)\right)^{\frac{1}{p}}.$$
At the price of a potential exchange of the coordinates, we can now work with some deterministic starting points $x$ and $y$ such that $x>y>0$. Owing to the monotonicity of $\LW$, we thus have for any $p\ge1$
$$\mathbb{E}(\vert X_{t}^x-Y_{t}^y\vert)^p = \mathbb{E}(X_{t}^x - Y_{t}^y)^p.$$
Assume now that $p \in \mathbb{N}^*$, we observe that $\LW$ acts on $(x,y) \mapsto (x-y)^p$  as:
$$
\LW (x-y)^p = - p \pi (x-y)^p + p a (x-y)^{p-1} + c \sum_{k=0}^{p-2} C_p^k g^{p-k} (x-y)^{k+1}.
$$
Setting $\beta_p(t) = \mathbb{E} \left|X_{ t}^x - Y_{{t}}^y \right|^p$, we can immediately  check that:
\begin{equation}\label{eq:beta}
\dot{\beta}_p(t) + \pi p \beta_p(t) = \left(p a \beta_{p-1}(t) + c \sum_{k=0}^{p-2} C_p^k g^{p-k} \beta_{k+1}(t)\right).
\end{equation}
When $p=1$, \eqref{eq:beta} implies that:
$
\beta_1(t) =\beta_1(0) e^{-  \pi t}\;\Rightarrow\, \ES[X_t^x-Y_t^y]=(x-y)e^{-\pi t} , 
$
so that:
$$
\mathcal{W}_1(\mu_t,\mu_{\infty}) \leq \mathcal{W}_1(\mu_0,\mu_{\infty}) e^{- t \pi}.
$$
For the lower-bound, we use: 
$$\mathcal{W}_1(\mu_t,\mu_{\infty})\ge \inf\left\{\nu_t \in{\cal C}_t, \left|\int (x-y) \nu_t(dx,dy)\right| \right\}=\left|\ES[X_t^{\mu_0}]-\ES[Y_t^{\mu_\infty}]\right|,$$
which implies that:
$$\mathcal{W}_1(\mu_t,\mu_{\infty})\ge  \left|\int \ES[X_t^{x}-Y_t^{y}] \mu_0(dx)\mu_\infty(dy)\right|=\left|\int (x-y)\mu_0(dx)\mu_\infty(dy)\right|e^{-\pi t}.$$
The lower-bound follows. \smallskip

\noindent Now, let us consider the case $p>1$ (with $p\in\mathbb{N}$). For $p=2$, we have
$$
\left( \beta_2(t) e^{2 \pi t}\right)' e^{-2  \pi t} = (2 a+cg^2) \beta_1(0) e^{- \pi t},
$$
and an integration leads to
$\beta_2(t) e^{2 \pi t} - \beta_2(0) = \frac{2 a+cg^2 }{\pi} \beta_1(0) [e^{ \pi t} - 1].$ As a consequence: 
$$
\beta_2(t) \leq  e^{- 2 \pi t} \beta_2(0)  + \frac{2a+cg^2}{\pi} \beta_1(0) e^{- \pi t}. 
$$
Using the inequalities $\sqrt{u+v}\le \sqrt{u}+\sqrt{v}$ and $\beta_2 \geq \mathcal{W}_2^2$, we thus deduce that:
$$
\mathcal{W}_{2}(\mu_t,\mu_{\infty}) \leq \mathcal{W}_{2}(\mu_0,\mu_{\infty}) e^{-  \pi t} + \sqrt{\frac{2a+cg^2}{\pi}} \sqrt{ \mathcal{W}_{1}(\mu_0,\mu_{\infty})} e^{- \frac{\pi t }{ 2} }.
$$
The result follows when $p=2$ by setting:
$$\gamma_2:=\mathcal{W}_{2}(\mu_0,\mu_{\infty})+ \sqrt{\frac{2a+cg^2}{\pi}} \sqrt{ \mathcal{W}_{1}(\mu_0,\mu_{\infty})}.$$
A recursive argument based on \eqref{eq:beta} shows that a constant $\gamma_p$ exists that only depends on $\mu_0$ and $\mu_{\infty}$ such that: 
\begin{equation*}\label{eq:boundW}{W}_p(\mu_t,\mu_{\infty}) \leq \gamma_p e^{-\frac{\pi}{p} t}. \end{equation*}
%
%

\end{proof}

\subsection{Proof of total variation results }

As mentioned before, the idea is to wait until the paths get close (with a probability controlled by the Wasserstein bound) and then to try to stick them (with high probability). Since  the jump size is deterministic, sticking the paths implies a non trivial coupling of the jump times which is described in the lemma below.

 We begin by establishing the next useful lemma.

\begin{lemma}\label{lemma:stick} Let $\varepsilon>0$ and $t\geq \frac{1}{b}\ln(1+\varepsilon)$. A coupling $(X_{t},Y_{t})_{t\geq 0}$ of paths driven by (\ref{generateur}) exists such that on $A_{x_{0},\varepsilon}$:
\begin{eqnarray*}
\mathbb{P}(X_{t}=Y_{t}, t\geq s) \geq \left(1-\frac{c}{b} x_0\varepsilon-e^{-\frac{a}{b} c s} - \frac{c\varepsilon}{b}\right) \max(0,1-\frac{c}{b}\varepsilon(x_0+g)),
\end{eqnarray*}
\noindent where $A_{x_{0},\varepsilon}= \left\{ (x,y)\vert \frac{a}{b}<x\leq x_{0},  0 < x-y \leq \varepsilon \right\}$.
\end{lemma}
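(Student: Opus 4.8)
The plan is to construct, for each pair of starting points $(x,y)\in A_{x_0,\varepsilon}$, an explicit coupling $(X_t,Y_t)_{t\ge 0}$ of two copies of the PDMP driven by \eqref{generateur} for which, on an event of probability at least the claimed right-hand side, the two trajectories meet at some time $\le s$ and then stay equal forever. After possibly exchanging the two coordinates we may assume $x>y$, hence $0<x-y\le\varepsilon$ and $a/b<x\le x_0$. Two elementary facts drive the argument. First, between jumps the gap contracts, $X_t-Y_t=(x-y)e^{-bt}$; therefore, starting from a gap in $(g,g+\varepsilon]$, it takes a \emph{deterministic} time $\sigma=\tfrac1b\ln\!\big((X_t-Y_t)/g\big)\le\tfrac1b\ln(1+\varepsilon)\le\varepsilon/b$ to bring the gap down to exactly the jump size $g$. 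Second, the flow keeps both coordinates above $a/b$ and, starting from $A_{x_0,\varepsilon}$, below $x_0$ (below $x_0+g$ right after one jump), so the jump intensities $cX_t,cY_t$ lie between $ca/b$ and $cx_0$, respectively $c(x_0+g)$.

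With these at hand I would run the coupling in three stages. Stage~1: using the monotone coupling \eqref{generateur_couplage_wass}, wait until the \emph{upper} trajectory performs one jump; up to a short window this raises the gap to a value in $(g,g+\varepsilon]$, and the only failure modes are that an $X$-solo jump (intensity $c(X_t-Y_t)\le c\varepsilon$, with time-integral $\le c\varepsilon/b$) slips in too early — probability $\le c\varepsilon/b$ — or that no jump occurs before $s$ — probability $\le e^{-(ca/b)s}$ since the intensity is $\ge ca/b$. Stage~2: let both trajectories follow the flow with \emph{no} jump during the deterministic time $\sigma\le\varepsilon/b$ above, so that the gap reaches exactly $g$; since the upper coordinate stays below $x_0+g$, the probability that no jump spoils this window is at least $1-\tfrac cb\varepsilon(x_0+g)$, which (being vacuous when negative) is the factor $\max(0,1-\tfrac cb\varepsilon(x_0+g))$. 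Stage~3: the gap being exactly $g$, switch to a non-monotone coupling of the two jump clocks in which the lower trajectory makes one jump of the deterministic size $g$ and thereby lands exactly on the upper one; from then on both trajectories are driven by the same jump clock, so $X_t=Y_t$ for every later $t$. For this synchronising jump to be produced in time it is enough that a jump occurs (probability $\ge 1-e^{-(ca/b)s}$) and that no spurious jump of the upper trajectory (intensity $\le cx_0$) occurs during the corresponding short window (probability $\ge 1-\tfrac cb x_0\varepsilon$). A union bound on the failures of Stages~1 and~3 produces the first factor $1-\tfrac cb x_0\varepsilon-e^{-(ca/b)s}-\tfrac{c\varepsilon}{b}$, while Stage~2 gives the second factor; the hypothesis $s\ge\tfrac1b\ln(1+\varepsilon)$ is precisely what guarantees that all these windows fit inside $[0,s]$.

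The crux — the only genuinely delicate point — is Stage~3: one must exhibit a coupling of the two inhomogeneous Poisson jump clocks that respects the marginal intensities $cX_t$ and $cY_t$ and yet, with the announced probability, fires a jump of the lower trajectory exactly when the gap equals $g$, with no jump of the upper one in between. I would carry this out in the spirit of the TCP analysis of \cite{BCG+}: the instant at which the gap hits $g$ is deterministic given the history of Stages~1 and~2, so the thinning of the clocks can be organised to place the synchronising jump there, and on each short window the elementary estimate $\mathbb{P}(\text{no point of a rate-}\lambda\text{ clock on a length-}\ell\text{ interval})\ge 1-\lambda\ell$, together with $\mathbb{P}(\text{at least one point before }s)\ge 1-e^{-(ca/b)s}$ for the triggering jump, controls every term. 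The remaining work — bounding the intensities along the flow, checking the window lengths, and assembling the estimates — is routine.
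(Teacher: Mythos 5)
The overall strategy you propose is indeed the one the paper uses: keep the two paths ordered, let the upper path $X$ jump first (raising the gap above $g$), let both flow until the gap shrinks back to exactly $g$, and at that instant have the lower path $Y$ jump onto $X$; meanwhile control three failure events (no jump of $X$ before $s$, a second jump of $X$ inside the short waiting window, and the small mismatch between the two first-jump laws). You even land on the correct magnitudes $e^{-(ca/b)s}$, $\tfrac{c}{b}x_0\varepsilon$, $\tfrac{c\varepsilon}{b}$ and $\tfrac{c}{b}\varepsilon(x_0+g)$. But what you defer to ``the remaining work is routine'' is precisely the substance of the lemma, and the thinning picture you invoke does not deliver it.

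The sticking point is your Stage~3. You need the Poisson clock of the \emph{lower} trajectory, whose compensator is the prescribed inhomogeneous intensity $cY_t$, to fire exactly at the (random, $T_1^x$-measurable) instant when the gap equals $g$, with the announced probability. Thinning a dominating clock cannot ``place'' a point at a prescribed time: any thinning keeps the jump time of $Y$ absolutely continuous, so the event $\{T_1^y = T_1^x+\sigma\}$ has probability zero unless the two jump times are coupled at the level of their \emph{distributions}, not merely of their driving intensities. The actual construction in the paper is a maximal coupling of $T_1^y$ with the transformed variable $\psi(T_1^x)$, where $\psi(t)=\tfrac1b\ln\bigl(e^{bt}+\tfrac{x-y}{g}\bigr)$ is exactly the map sending $X$'s first jump time to the instant when a $Y$-jump of size $g$ would merge the paths. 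The probability of success is then $\|\mathbb{P}_{T_1^y}\wedge \mathbb{P}_{\psi(T_1^x)\mathds 1_{\psi(T_1^x)\le s}}\|_{TV}$, and the three terms in the first factor come from a pointwise comparison of the density $f_y$ of $T_1^y$ with the $\psi$-pushforward density of $T_1^x$ (using $f_y(t)\ge f_x(t)-c\varepsilon e^{-bt}$ and $(\psi^{-1})'\ge 1$), while the second factor is $q(0,x_0+g)=\mathbb{P}(T_2^x>\psi(T_1^x))$, handled by the strong Markov property. None of this is routine once one has to produce the stated constants; identifying $\psi$, setting up the maximal coupling, and carrying out the density computation is essentially the whole proof. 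A secondary issue is that your Stage~1 accounting treats an ``$X$-solo jump'' simultaneously as the desired triggering event and as a failure mode, and your union bound is taken across stages whose underlying events are not independent; the paper avoids this by writing a single probability and factorizing it via the strong Markov property rather than by staging.
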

\subparagraph{Proof} Let $\varepsilon>0$ and $(x,y)\in A_{x_0,\varepsilon}$  (in particular, $x>y$). Designate $T_1^x$ and  $T_1^y$  as the first jumps of $(X_t^x)$ and $(X_t^y)$, respectively, and $T_2^x$ as  the second jump of $(X_t^x)$. It can be noted  that: 
$$\mathbb{P}(X_{t}=Y_{t}, t\geq s) \geq \PE(X_{T_1^y}^x=X_{T_1^y}^y, T_1^y\le s).$$
We aim to build a coupling that leads to a sharp lower-bound of the r.h.s. For this purpose, note that if $T_1^x<T_1^y<T_2^x$, the triple $(T_1^x,T_1^y,T_2^x)$ satisfies:
\begin{eqnarray*}
X_{T_1^y}^y=X_{T_1^y}^x\,\Longleftrightarrow\, \frac{a}{b} + \left(y-\frac{a}{b}\right)e^{-b T_{1}^{y}} +g = \frac{a}{b} + \left(X_{T_{1}^{x}}^x-\frac{a}{b}\right)e^{-b(T_{1}^{y}-T_{1}^{x})}. 
\end{eqnarray*}
Considering that $X_{T_1^x}^x=\frac{a}{b}+(x-\frac{a}{b})e^{-{T_1^x}}+g$ and defining $\psi(t)=\frac{1}{b}\ln \left( e^{b t}+\frac{x-y}{g}\right)$, we can verify that  $X_{T_1^y}^y=X_{T_1^y}^x\le s$ and $T_1^x<T_1^y<T_2^x$  as soon as 
\begin{eqnarray*}
T_{1}^{y} =\psi(T_1^x)\le s \quad\text{and}\quad T_2^x\ge \psi(T_1^x),
\end{eqnarray*}
%
since  $\psi(t) \ge t$. 
We are naturally encouraged to consider  $S_1^{x,s}=\psi(T_1^x)1_{\{\psi(T_1^x)\le s\}}$ and it is well known that the law of $(T_1^x,T_1^y)$ can be described through the maximal coupling:
$$T_1^y=\Theta U+(1-\Theta) V_y,\quad \psi(T_1^x)=\Theta U+(1-\Theta) V_x,$$ 
where $V_x, V_y,\Theta$ and $U$ are independent, $U\sim \frac{\PE_{T_1^y}\wedge\PE_{\psi(T_1^x)}}{\|\PE_{S_1^{x,s}}\wedge \PE_{T_1^y}\|_{_{TV}}}$ and $\Theta\sim{\cal B}(p)$ where $p=\|\PE_{S_1^{x,s}}\wedge \PE_{T_1^y}\|_{_{TV}}$. With this coupling, if $q(t,z)=\PE(T_1^z\ge \psi(t)-t)$, the Strong Markov property yields
$$\PE(T_2^x-T_1^x|(T_1^x,T_1^y))=\PE(T_2^x\ge \psi(T_1^x)|T_1^x)=q(T_1^x,X_{T_1}^x) .$$
Since $z\mapsto q(t,z)$ is increasing and $x>a/b$ (from the assumption on $A_{x_0,\epsilon}$), we deduce that $X_{T_1}^x\le x+g$ and it therefore follows that: 
$$\PE(T_2^x>T_1^y|(T_1^x,T_1^y))\ge q(t,x+g)\ge q(0,x+g).$$
given that $t\mapsto \psi(t)-t$ is a non-decreasing function. As a consequence, we obtain that with this coupling:
\begin{equation}\label{eq:ref11}
\PE(X_{T_1^y}^x=X_{T_1^y}^y, T_1^y\le s)\ge q(0,x+g) \PE(\Theta=1)=q(0,x+g) \|\PE_{S_1^{x,s}}\wedge \PE_{T_1^y}\|_{_{TV}}.
\end{equation}
It remains to find a lower bound of the total variation distance involved in the r.h.s. of the above inequality .
Recall that
$$\|\PE_{S_1^{x,s}}\wedge \PE_{T_1^y}\|_{_{TV}}=\int_0^{+\infty} f_y(t)\wedge g_{x,s}(t) dt,$$
where $f_y$ and $g_{x,s}$ denote the densities of $T_1^y$ and $S_1^{x,s}$, respectively. We therefore have:
\begin{eqnarray*}
\forall t>0,\quad f_{y}(t) = c\phi(y,t)e^{-\int\limits^{t}_{0}c\phi(y,u)du}\quad\textnormal{with}\quad \phi(y,t)=\frac{a}{b}+(y-\frac{a}{b})e^{-bt},
\end{eqnarray*}
and  a change of variable yields:
\begin{equation}\label{diepoipr}
\forall t>0,\quad g_x(t)= f_x (\psi^{-1}(t))(\psi^{-1})'(t)1_{\{\psi(0)\le t\le s\}}.
\end{equation}
%
On the one hand, since $(x,y) \in A_{x_0,\epsilon}$, we can check that:
$$\forall t\,\ge0, \quad \phi(x,t)-\varepsilon e^{-bt}\le \phi(y,t)\le \phi(x,t), $$
and we can then conclude that:
$$ \forall t>0,\quad  f_y (t)\ge f_x(t)-\varepsilon e^{-bt}.$$
One the other hand, note that:
$$\forall t> \psi(0),\quad \psi^{-1}(t)=\frac{1}{b}\ln \left(e^{bt}-\frac{x-y}{g}\right)\le t \quad \text{and} \quad
 (\psi^{-1})'(t)= \frac{e^{bt}}{e^{bt}-\frac{x-y}{g}}\ge 1, $$
and we can deduce from \eqref{diepoipr}  that $\forall t\in[ \psi(0),s]$:
$$g_x(t)\ge  c\phi(x,\psi^{-1}(t)) e^{-\int_0^t c \phi(x,s) ds}\ge c\phi(x,t) e^{-\int_0^t c \phi(x,s) ds}=f_x(t).$$
Note that we used that $t\mapsto \phi(x,t)$ is decreasing since $x>a/b$. Thus, 
$$\left(\PE_{T_1^y}\wedge \PE_{S_1^x}\right)(dt)\ge h(t) dt\quad \textnormal{with}\quad h(t)=  (f_x(t)-\varepsilon e^{-bt})1_{\psi(0)\le t\le s} dt.$$
As a consequence, 
$$\|\PE_{S_1^{x,s}}\wedge \PE_{T_1^y}\|_{_{TV}}\ge e^{-\int_0^{\psi(0)} c\phi(x,u) du}-e^{-\int_0^{s} c\phi(x,u) du}-\frac{\varepsilon}{b}.$$
Checking that $\psi(0)\le \varepsilon/b$ and that $\forall t\ge0$, $a/b\le\phi(x,t)\le x\le x_0$, we deduce that 
\begin{align*}
\|\PE_{S_1^{x,s}}\wedge \PE_{T_1^y}\|_{_{TV}}&\ge e^{-\frac{c x_0\varepsilon}{b}}-e^{-\frac{a}{b} c s}-\frac{\varepsilon}{b}\ge 1-\frac{c x_0\varepsilon}{b}-e^{-\frac{a}{b} c s}-\frac{\varepsilon}{b},
\end{align*}
where we used $e^{-u}\ge 1-u$ for $u\ge0$ in the second line. To conclude the proof, it remains to plug this inequality into \eqref{eq:ref11} and to observe that:
$$q(0,x+g)\ge q(0,x_0+g)=e^{-\int_0^{\psi(0)}c\phi(x_0+g,s)ds}\ge 1-c\psi(0)(x_0+g)\ge 1-\frac{c}{b}\varepsilon(x_0+g).$$
\hfill$\square$

We now provide  the proof of the ergodicity w.r.t. the total variation distance.

\begin{proof}[Proof of Theorem \ref{distance var totale}]For any starting distribution $\mu_0$,
\begin{equation}\label{eq:point1}
\|\mu_0 P_t-\mu_\infty\|_{TV}\le \int \|\delta_x P_t-\delta_y P_t\|_{TV}\mu_0(dy)\mu_\infty(dx).
\end{equation}
The idea is to use the Wasserstein coupling during a time $t_1$ and  to then try to stick the paths on the interval $[t_1,t]$ using Lemma \ref{lemma:stick}.   Consider $A_{x_0,\varepsilon}$ defined in Lemma \ref{lemma:stick} and the alternative set $A_{x_0,\varepsilon}^*=\{(x,y),
a/b<y<x_0, 0< y-x\le \varepsilon\}$. Set $B_{x_0,\varepsilon}=A_{x_0,\varepsilon}\cup A_{x_0,\varepsilon}^*$, we have:
\begin{equation}\label{eq:point2}
1-\|\delta_x P_t-\delta_y P_t\|_{_{TV}}\ge \PE(X_t^x=Y_t^y|(X_{t_1}^x,Y_{t_1}^y)\in B_{x_0,\varepsilon})\PE((X_{t_1}^x,Y_{t_1}^y)\in B_{x_0,\varepsilon}).
\end{equation}
Since the Wasserstein coupling preserves the order and since $x>a/b$ $\mu_\infty(dx)$-$a.s.$, it can be noted that $\mu_\infty(dx)$-$a.s.$,
\begin{equation*}
(X_{t_1}^x,Y_{t_1}^y)\in B_{x_0,\varepsilon}\Longleftrightarrow \begin{cases} X_{t_1}^x-X_{t_1}^y\le \varepsilon \textnormal{ and }X_{t_1}^x\le x_0 &\textnormal{if $x\ge y$}\\
 X_{t_1}^y-X_{t_1}^x\le \varepsilon \textnormal{ and }X_{t_1}^y\le x_0&\textnormal{if $x< y$}.
\end{cases}
\end{equation*}
It follows that for every $p>0$, $\mu_\infty(dx)$ almost surely:
\begin{align*}
 \PE((X_{t_1}^x,Y_{t_1}^y)\in B_{x_0,\varepsilon}^c)&\le \PE(|X_{t_1}^x-X_{t_1}^y|> \varepsilon)+\PE(X_{t_1}^x>x_0)+\PE(X_{t_1}^y>x_0)\\
&\le \frac{1}{\varepsilon}\ES[|X_{t_1}^x-X_{t_1}^y|]+\frac{1}{x_0^p}\left(\ES[(X_{t_1}^x)^p]+\ES[(X_{t_1}^y)^p]\right).
\end{align*}
On the basis of Theorem \ref{theo:pdmp} and Lemma \ref{prop:moments}, a constant $C_p$  exists such that $C_p$ depends on $p$, $\mu_0$ and $\mu_\infty$ but not on $t_1$ and satisfies:

$$\int\PE((X_{t_1}^x,Y_{t_1}^y)\in B_{x_0,\varepsilon}^c)\mu_0(dy)\mu_\infty(dx)\le \frac{{\cal W}_1(\mu_0,\mu_\infty)}{\varepsilon}e^{-\pi t_1}+ \frac{C_p}{x_0^p}.$$
Finally,  Lemma \ref{lemma:stick} leads to:
\begin{eqnarray*}
\lefteqn{
\PE(X_t^x=Y_t^y|(X_{t_1}^x,Y_{t_1}^y)\in B_{x_0,\varepsilon})} \\ 
& \ge& \left(1-\frac{c}{b} x_0\varepsilon-e^{-\frac{a}{b} c (t-t_1)} - \frac{c\varepsilon}{b}\right) \left\{ 0 \vee 1-\frac{c}{b}\varepsilon(x_0+g) \right\}
\end{eqnarray*}
so that by plugging the previous inequalities into \eqref{eq:point2} and \eqref{eq:point1}, it can be deduced that for every $p>1$, a constant $\tilde{C}_p$ exists  such that
for every $t\ge 0$, for every $x_0$ and $\varepsilon$ such that $x_0\varepsilon\le b/2c$ (with $x_0>1$ and $\varepsilon\in(0,1))$,
$$\|\mu_0 P_t-\mu_\infty\|_{TV}\le{\tilde C}_p\left(x_0\varepsilon+e^{-\frac{a}{b} c (t-t_1)} +\varepsilon+\frac{1}{\varepsilon}e^{-\pi t_1}+ \frac{1}{x_0^p}\right).$$
If we try to optimize the above bound, we set $t_1= \delta t$, $x_0= C_1 e^{\alpha t}$, $\varepsilon=C_2 e^{-\beta t}$ with $\delta\in(0,1)$ and $\beta>\alpha>0$ and  deduce that a constant $\check C_p$ exists such that:
$$\|\mu_0 P_t-\mu_\infty\|_{TV}\le \check{C}_p \exp\left(-t \left\{ \beta-\alpha \wedge \frac{ca}{b}(1-\delta) \wedge \delta\pi-\beta \wedge \alpha p \right\} \right).$$
We can choose $p$ as large as we want ($\mu_0$ has moments of any order) and thus $\alpha$ arbitrarily small. The result then follows using an optimization on $(\beta,\delta)$.
\end{proof}

\appendix
\section{Technical result for the pseudo-regret upper bound}

\begin{lemma}\label{lemma:sum} Let $\alpha>0$,  $\gamma_1\in(0,1)$ and $\tilde{n}\in\mathbb{N}$ such that $\alpha\gamma_{\tilde{n}}<1$ and $\tilde{n}\ge 1/(\alpha\gamma_1)^2)$. We have:
$$
\forall n \geq \tilde{n} \qquad 
\sum\limits_{j=\tilde{n}}^{n-1}\gamma_{j}\prod\limits_{l=j}^{n-1}(1-\alpha\gamma_{l})\leq \frac{1}{\alpha}$$
\end{lemma}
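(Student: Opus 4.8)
The plan is to bound the sum by comparing it to a telescoping quantity. First I would observe that since $\gamma_l\in(0,1)$ and $\alpha\gamma_l<1$ for $l\ge\tilde n$, each factor $1-\alpha\gamma_l$ lies in $(0,1)$, so the product $\prod_{l=j}^{n-1}(1-\alpha\gamma_l)$ is a well-defined number in $(0,1]$. The natural idea is to write $\gamma_j$ as roughly $\frac{1}{\alpha}\bigl(1-(1-\alpha\gamma_j)\bigr)$, which would make the sum telescope: with $P_j := \prod_{l=j}^{n-1}(1-\alpha\gamma_l)$ (and $P_n:=1$) one has $\alpha\gamma_j P_j = P_j - (1-\alpha\gamma_j)P_j$, and $(1-\alpha\gamma_j)P_j$ would be $P_{j-1}$ if the product started at $j-1$ — but it starts at $j$, so instead $(1-\alpha\gamma_j)P_j = \prod_{l=j}^{n-1}(1-\alpha\gamma_l)\cdot(1-\alpha\gamma_j)$ does not telescope directly. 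The cleaner route is to note $P_{j+1}(1-\alpha\gamma_j) = P_j$ is false as well; rather $P_j = (1-\alpha\gamma_j)\cdots(1-\alpha\gamma_{n-1})$, so $P_j = (1-\alpha\gamma_j)P_{j+1}$. Hence $\alpha\gamma_j P_{j+1} = P_{j+1}-P_j$, and therefore
$$\sum_{j=\tilde n}^{n-1}\alpha\gamma_j P_{j+1} = \sum_{j=\tilde n}^{n-1}(P_{j+1}-P_j) = P_n - P_{\tilde n} \le 1.$$
So $\sum_{j=\tilde n}^{n-1}\gamma_j P_{j+1}\le 1/\alpha$. The only gap between this and the claimed bound is that the lemma has $P_j$, not $P_{j+1}$, i.e. the product includes the factor $(1-\alpha\gamma_j)$ as well. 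Since $(1-\alpha\gamma_j)\le 1$ we have $P_j = (1-\alpha\gamma_j)P_{j+1}\le P_{j+1}$, and therefore
$$\sum_{j=\tilde n}^{n-1}\gamma_j\prod_{l=j}^{n-1}(1-\alpha\gamma_l) = \sum_{j=\tilde n}^{n-1}\gamma_j P_j \le \sum_{j=\tilde n}^{n-1}\gamma_j P_{j+1}\le \frac{1}{\alpha},$$
which is exactly the assertion.

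I expect no serious obstacle here; the argument is a one-line telescoping once the indices are aligned correctly. The subtle point — and the thing worth double-checking — is the role of the hypotheses $\alpha\gamma_{\tilde n}<1$ and $\tilde n\ge 1/(\alpha\gamma_1)^2$. The first is genuinely needed: it guarantees $1-\alpha\gamma_l>0$ for all $l\ge\tilde n$ (using that $(\gamma_l)$ is decreasing, so $\gamma_l\le\gamma_{\tilde n}$), which is what makes every partial product $P_j$ nonnegative and keeps the telescoping sum bounded by $P_n=1$ rather than by something larger. The second hypothesis, $\tilde n\ge(\alpha\gamma_1)^{-2}$, is in fact equivalent to $\alpha\gamma_{\tilde n}\le 1$ given the explicit form $\gamma_n=\gamma_1 n^{-1/2}$ used throughout, since $\alpha\gamma_{\tilde n} = \alpha\gamma_1\tilde n^{-1/2}\le 1 \iff \tilde n\ge(\alpha\gamma_1)^2$... wait, that is the reverse inequality, so one should be careful: $\alpha\gamma_1\tilde n^{-1/2}\le 1\iff \tilde n^{1/2}\ge\alpha\gamma_1\iff\tilde n\ge(\alpha\gamma_1)^2$, which is a weaker requirement than $\tilde n\ge(\alpha\gamma_1)^{-2}$ when $\alpha\gamma_1<1$. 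In the intended application $\alpha\gamma_1$ may exceed $1$, in which case $(\alpha\gamma_1)^{-2}$ is the binding condition and still implies $\alpha\gamma_{\tilde n}<1$. Either way, both hypotheses together certainly imply $\alpha\gamma_l<1$ for $l\ge\tilde n$, which is all the proof uses, so I would simply invoke that and proceed with the telescoping argument above. I would present the proof by first defining $P_j:=\prod_{l=j}^{n-1}(1-\alpha\gamma_l)$ with the convention $P_n=1$, recording $P_j=(1-\alpha\gamma_j)P_{j+1}$ and $P_j\le P_{j+1}$, and then displaying the telescoping sum.
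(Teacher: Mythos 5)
Your telescoping proof is correct, and it is a genuinely different route from the paper's. The paper proves the lemma via an integral comparison that is specific to the form $\gamma_j = \gamma_1/\sqrt{j}$: it bounds $\prod_{l=j}^{n-1}(1-\alpha\gamma_l) \le \exp\bigl(-\alpha\sum_{l=j}^{n-1}\gamma_l\bigr)$, then lower-bounds $\sum_{l=j}^{n-1}\gamma_l \ge 2\gamma_1(\sqrt{n}-\sqrt{j})$, and finally compares $\sum_j j^{-1/2}e^{2\alpha\gamma_1\sqrt{j}}$ with $\int n_0^{-1/2}e^{2\alpha\gamma_1\sqrt{x}}\,dx$. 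The hypothesis $\tilde n \ge 1/(\alpha\gamma_1)^2$ is precisely what the paper uses to guarantee that $x\mapsto x^{-1/2}e^{2\alpha\gamma_1\sqrt{x}}$ is nondecreasing on the summation range, so that the sum--integral comparison goes in the right direction. Your approach, defining $P_j := \prod_{l=j}^{n-1}(1-\alpha\gamma_l)$ with $P_n=1$, observing $\alpha\gamma_j P_{j+1} = P_{j+1}-P_j$, telescoping, and then using $\gamma_j P_j = \gamma_j(1-\alpha\gamma_j)P_{j+1} \le \gamma_j P_{j+1}$, is cleaner and strictly more general: it works for any nonincreasing positive step sequence with $\alpha\gamma_{\tilde n}<1$, and it never invokes the condition $\tilde n \ge 1/(\alpha\gamma_1)^2$ at all. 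That is a real gain, not a gap; the extra hypothesis simply becomes unnecessary with your argument.

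One small caveat in your side discussion: the claim that $\tilde n \ge (\alpha\gamma_1)^{-2}$ ``still implies $\alpha\gamma_{\tilde n}<1$'' when $\alpha\gamma_1>1$ is not right. With $\gamma_{\tilde n}=\gamma_1/\sqrt{\tilde n}$, the condition $\tilde n\ge(\alpha\gamma_1)^{-2}$ only yields $\alpha\gamma_{\tilde n}\le(\alpha\gamma_1)^2$, which exceeds $1$ when $\alpha\gamma_1>1$. So the two hypotheses are genuinely independent in that regime. This does not affect your proof, since you correctly rely only on the explicit hypothesis $\alpha\gamma_{\tilde n}<1$ together with the monotonicity of $(\gamma_l)$, but the parenthetical remark should be dropped or corrected.
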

\subparagraph{Proof} Let $j\ge \tilde{n}$. On the basis of the inequality $\ln(1+x)\geq x$ for $x > -1$, we have

$$\prod\limits_{l=j}^{n-1}(1-\alpha \gamma_{l})= \exp\left(\ln\sum\limits_{l=j}^{n-1}(1-\alpha \gamma_{l})\right)\leq \exp\left(-\sum\limits_{l=j}^{n-1}\alpha\gamma_{l}\right)$$

\noindent Using that $x\mapsto 1/\sqrt{x}$ is decreasing, 
$$\sum\limits_{l=j}^{n-1}\gamma_{l} =\sum\limits_{l=j}^{n-1}\frac{\gamma_1}{\sqrt{l}}\geq \gamma_1\sum\limits_{l=j}^{n-1}\int\limits_{l}^{l+1}\frac{1}{\sqrt{x}}dx = \gamma_1\int\limits_{j}^{n}\frac{1}{\sqrt{x}}dx = 2\gamma_1(\sqrt{n}-\sqrt{j})$$

\noindent so that:

\begin{eqnarray*}
\sum\limits_{j=n_0}^{n-1}\gamma_{j}\prod\limits_{l=j}^{n-1}(1-\alpha \gamma_{l}) &\leq & \gamma_1 e^{-2\alpha\gamma_1\sqrt{n}}\sum\limits_{j=n_0}^{n-1}\frac{e^{2\alpha\gamma_1\sqrt{j}}}{\sqrt{j}}.
\end{eqnarray*}

\noindent Checking that $x\mapsto \frac{1}{\sqrt{x}}e^{\alpha\gamma_1\sqrt{x}}$ is non-decreasing on [$\frac{1}{(\alpha\gamma_1)^2},\infty$) it can be deduced that for any $j\geq n_0,$ 
$$\sum_{j=n_0}^{n-1}\frac{1}{\sqrt{j}}e^{2\alpha\gamma_1\sqrt{j}}\leq \int\limits_{n_0}^{n} \frac{1}{\sqrt{x}}e^{2\alpha\gamma_1\sqrt{x}}dx\le \frac{1}{\alpha\gamma_1}.$$
The lemma follows.

\hfill \cqfd
\bibliographystyle{plain}
\bibliography{GPS_2014}

\end{document}